\documentclass[fleqn]{amsart}

\usepackage{amsmath,amsthm,amssymb,latexsym}
\usepackage{url}

\newtheorem{theorem}{Theorem}[section]
\newtheorem{definition}[theorem]{Definition}
\newtheorem{lemma}[theorem]{Lemma}
\newtheorem{fact}[theorem]{Fact}
\newtheorem{claim}{Claim}
\newtheorem{prop}[theorem]{Proposition}
\newtheorem{cor}[theorem]{Corollary}

\newcommand{\mrm}{\mathrm}
\newcommand{\mbb}{\mathbb}
\newcommand{\mcal}{\mathcal}
\newcommand{\msf}{\mathsf}
\newcommand{\lchon}{\textrm{``}}
\newcommand{\rchon}{\textrm{''}}
\newcommand{\rst}{\!\upharpoonright\!}
\newcommand{\dmnd}{\diamondsuit}

\newcommand{\suc}{\mathop{\mathrm{suc}} \nolimits}
\newcommand{\dom}{\mathop{\mathrm{dom}} \nolimits}
\newcommand{\ran}{\mathop{\mathrm{ran}} \nolimits}
\newcommand{\nart}{\mathop{\mathrm{rt}} \nolimits}

\newenvironment{renumerate}%
{\begin{enumerate}}{\end{enumerate}}
\newenvironment{Renumerate}%
{\begin{enumerate}}{\end{enumerate}}
\newenvironment{aenumerate}%
{\begin{enumerate}}{\end{enumerate}}

\title{An extension of Subcomplete Forcing Axiom which implies $\dmnd^+$}
\author{Hiroshi Sakai}
\address{Graduate School of Mathematical Sciences, The University of Tokyo, Tokyo 153-8902, Japan}
\email{hrshsakai@g.ecc.u-tokyo.ac.jp}
\subjclass[2020]{03E50, 03E55}
\keywords{Subcomplete Forcing Axiom, Diamond Principle, Reflection Principle}
\thanks{This work is supported by JSPS KAKENHI Grant Numbers 21K03338, 24K06828}

\begin{document}

\begin{abstract}
We prove that the Subcomplete Forcing Axiom ($\msf{SCFA}$), introduced by Jensen \cite{Jensen3},
is consistent with $\dmnd_{\omega_1}^+$.
For this, we introduce a weaker variation of the subcompleteness of forcing notions.
More precisely, for a certain kind of $\dmnd_{\omega_1}$-like sequence $\vec{K}$,
we define the notion of $\vec{K}$-subcompleteness, which is a weaker notion than subcompleteness.
Then, the forcing axiom for $\vec{K}$-subcomplete forcing notions ($\vec{K}$-$\msf{SCFA}$)
implies $\msf{SCFA}$. We show that $\vec{K}$-$\msf{SCFA}$
is consistent and implies $\dmnd_{\omega_1}^+$.
\end{abstract}

\maketitle


\section{Introduction}

Jensen \cite{Jensen1,Jensen4} introduced the notion of subcomplete forcing,
which is weaker than $\sigma$-closedness.
Besides $\sigma$-closed forcing notions, the class of all subcomplete forcing notions includes
Namba forcing (under the Continuum Hypothesis $\msf{CH}$),
Prikry forcing and forcing notions shooting clubs through stationary subsets of ordinals of
countable cofinality.
Subcomplete forcings add no new reals and preserve stationary subsets of $\omega_1$.
Also, all revised countable support iterations of subcomplete forcings are subcomplete.
Jensen \cite{Jensen2} used subcomplete forcing notions
to settle the consistency strength of the Extended Namba Problem.

The original notion of subcompleteness in \cite{Jensen1,Jensen2} included an additional technical condition
needed for its preservation under revised countable support iterations.
Recently, Fuchs-Switzer \cite{FS} showed that this technical condition is not necessary for Miyamoto's nice iterations \cite{Miyamoto_nice}.

The forcing axiom for subcomplete forcing notions,
which is called the Subcomplete Forcing Axiom and denoted as $\msf{SCFA}$,
is interesting as a fragment of Martin's Maximum ($\msf{MM}$) consistent with $\msf{CH}$.
In fact, Jensen \cite{Jensen3} proved that $\msf{SCFA}$ is consistent with $\diamondsuit_{\omega_1}$.
Jensen \cite{Jensen3} also proved that $\msf{SCFA}$ implies several interesting consequences of $\msf{MM}$
such as the Singular Cardinal Hypothesis and the reflection of stationary sets consisting of ordinals of countable cofinality.
Consequences of $\msf{SCFA}$ are investigated in more detail in \cite{Fuchs,Fuchs2,Fuchs3,FM,FS,SS}.

Besides these consequences of $\msf{SCFA}$,
$\msf{MM}$ has many interesting consequences which are consistent with $\dmnd_{\omega_1}$.
For example, $\msf{MM}$ implies the reflection of stationary subsets of $\mcal{P}_{\omega_1} ( \lambda )$
for $\lambda \geq \omega_2$, which is often called the Weak Reflection Principle and denoted as $\msf{WRP}$,
Chang's Conjecture and the non-existence of $\omega_1$-Kurepa trees.
It was asked in \cite{Fuchs} and \cite{FS} whether these consequences of $\msf{MM}$ also
follow from $\msf{SCFA}$. Recall that
\[
\msf{WRP} \ \Rightarrow \ \mbox{Chang's Conjecture} \ \Rightarrow \ 
\neg \exists \, \mbox{$\omega_1$-Kurepa trees} \ \Rightarrow \ \neg \diamondsuit^+_{\omega_1} \, .
\]

In this paper, we answer this question negatively.
Namely, we prove that if $\msf{ZFC}$ is consistent with the existence of a supercompact cardinal,
then $\msf{ZFC} + \msf{SCFA} + \diamondsuit^+_{\omega_1}$ is consistent. (Corollary \ref{cor:con_scfa_dmnd+})

To prove this, we introduce the notion of $\vec{K}$-subcomplete forcing
for a $\dmnd$-model sequence $\vec{K} = \langle K_\xi \mid \xi < \omega_1 \rangle$.
It will follow from the definition that all subcomplete forcing notions are $\vec{K}$-subcomplete.
So the forcing axiom for $\vec{K}$-subcomplete forcing notions, denoted as $\vec{K}$-$\msf{SCFA}$,
implies $\msf{SCFA}$.
We prove the following.
\begin{Renumerate}
\item For any $\dmnd$-model sequence $\vec{K}$,
all nice iterations of $\vec{K}$-subcomplete forcings are $\vec{K}$-subcomplete.
(Theorem \ref{thm:K_subcomplete_nice_it})
\item If $\msf{ZFC}$ is consistent with the existence of a supercompact cardinal,
then it is consistent with $\vec{K}$-$\msf{SCFA}$ for some $\dmnd$-model sequence $\vec{K}$.
(Theorem \ref{thm:con_K_scfa})
\item If $\vec{K}$-$\msf{SCFA}$ holds for some $\dmnd$-model sequence $\vec{K}$,
then $\msf{SCFA}$ and $\dmnd^+_{\omega_1}$ hold.
(Proposition \ref{prop:K_scfa_scfa} and \ref{prop:K_scfa_dmnd+})
\end{Renumerate}

This paper is organized as follows.
In \S \ref{sec:preliminaries}, we present our notation and basic facts used in this paper.
In \S \ref{sec:K_subcomplete}, we introduce the notion of $\vec{K}$-subcomplete forcing
and study its basic properties.
In \S \ref{sec:iteration}, we discuss nice iterations of $\vec{K}$-subcomplete forcings to prove (I) above.
Finally, in \S \ref{sec:K_scfa}, we investigate $\vec{K}$-$\msf{SCFA}$ to prove (II) and (III).


\section{Preliminaries} \label{sec:preliminaries}

In this section, we present our notation and basic facts used in this paper.
For background material not covered here, see Kunen \cite{Kunen} or Jech \cite{Jech}.


\subsection{$\dmnd$-principles}

In this paper, we deal with several $\dmnd$-principles. Here we recall them.

\begin{itemize}
\item A $\dmnd_{\omega_1}$-\emph{sequence} is a sequence $\langle b_\xi \mid \xi < \omega_1 \rangle$
such that for any $B \subseteq \omega_1$ there are stationary many $\xi < \omega_1$ with $B \cap \xi = b_\xi$.
\item A $\dmnd_{\omega_1}^-$-\emph{sequence} is a sequence $\langle \mcal{B}_\xi \mid \xi < \omega_1 \rangle$
of countable sets such that
for any $B \subseteq \omega_1$ there are stationary many $\xi < \omega_1$ with $B \cap \xi \in \mcal{B}_\xi$.
\item A $\dmnd_{\omega_1}^*$-\emph{sequence} is a sequence $\langle \mcal{B}_\xi \mid \xi < \omega_1 \rangle$
of countable sets such that
for any $B \subseteq \omega_1$ there are club many $\xi < \omega_1$ with $B \cap \xi \in \mcal{B}_\xi$.
\item A $\dmnd_{\omega_1}^+$-\emph{sequence} is a sequence $\langle \mcal{B}_\xi \mid \xi < \omega_1 \rangle$
of countable sets such that
for any $B \subseteq \omega_1$ there is a club $C \subseteq \omega_1$ with
$B \cap \xi , C \cap \xi \in \mcal{B}_\xi$ for all $\xi \in C$.
\end{itemize}

Let $\dmnd_{\omega_1}$ ($\dmnd_{\omega_1}^-$, $\dmnd_{\omega_1}^*$, $\dmnd_{\omega_1}^+$, respectively)
be the assertion that a $\dmnd_{\omega_1}$-sequence (a $\dmnd_{\omega_1}^-$-sequence,
a $\dmnd_{\omega_1}^*$-sequence, a $\dmnd_{\omega_1}^+$-sequence, respectively) exists.
We have the following implication relations.
\[
\dmnd_{\omega_1}^+ \ \Rightarrow \ \dmnd_{\omega_1}^* \ \Rightarrow \ 
\dmnd_{\omega_1}^- \ \Leftrightarrow \ \dmnd_{\omega_1}
\]
Recall also that $\dmnd_{\omega_1}^+$ implies the existence of an $\omega_1$-Kurepa tree.
See Kunen \cite{Kunen} for proofs of these facts.


\subsection{Forcing and its iteration}

In this paper, we follow Miyamoto \cite{Miyamoto_nice} for notations on forcing.
A \emph{forcing notion} is a separative preorder with a largest element.
Recall that a preorder $\mbb{P}$ is separative if for any $p,q \in \mbb{P}$ with
$p \nleq_\mbb{P} q$, there is $p' \leq_\mbb{P} p$ which is incompatible with $q$.
Recall also that a preorder $\mbb{P}$ is separative if and only if
for any $p,q \in \mbb{P}$, $p \leq_\mbb{P} q$ exactly when
$p \Vdash_\mbb{P} \lchon\, q \in \dot{G} \,\rchon$, where $\dot{G}$ is the canonical name
for a $\mbb{P}$-generic filter.

Let $\mbb{P}$ be a forcing notion.
The largest element of $\mbb{P}$ is denoted as $1_\mbb{P}$.
For $p,q \in \mbb{P}$, we let $p \equiv_\mbb{P} q$ denote that $p \leq_\mbb{P} q$ and $q \leq_\mbb{P} p$.
A subscript $\mbb{P}$ in $\leq_\mbb{P}$, $\Vdash_\mbb{P}$, $1_\mbb{P}$
and $\equiv_\mbb{P}$ is often omitted when it is clear from the context.

For a forcing notion $\mbb{P}$, \emph{the completion} of $\mbb{P}$ is a complete Boolean algebra $\mbb{B}$
such that there is a dense embedding from $\mbb{P}$ to $\mbb{B} \setminus \{ 0_\mbb{B} \}$.
For any forcing notion, its completion exists and is unique up to isomorphism.
For forcing notions $\mbb{P}$ and $\mbb{Q}$, we say that $\mbb{P}$ and $\mbb{Q}$ are \emph{forcing equivalent} if
the completions of $\mbb{P}$ and $\mbb{Q}$ are isomorphic.

Next, we present notations on forcing iterations.
A sequence $\langle \mbb{P}_\alpha \mid \alpha < \delta \rangle$ of forcing notions,
where $\delta \in \mrm{On}$,
is called an \emph{iteration} if, for all $\alpha , \beta$ with $\alpha \leq \beta < \delta$,
the following conditions hold,
where $\leq_\alpha$ and $1_\alpha$ denote $\leq_{\mbb{P}_\alpha}$ and $1_{\mbb{P}_\alpha}$,
respectively.
\begin{renumerate}
\item $\mbb{P}_\alpha$ consists of functions on $\alpha$.
\item For all $p \in \mbb{P}_\beta$ we have $p \rst \alpha \in \mbb{P}_\alpha$,
and $1_\beta \rst \alpha = 1_\alpha$.
\item For any $p \in \mbb{P}_\alpha$ and any $q \in \mbb{P}_\beta$,
if $p \leq_\alpha q \rst \alpha$, then $p^\frown q \rst [ \alpha , \beta ) \in \mbb{P}_\beta$,
and $p^\frown q \rst [ \alpha , \beta ) \leq_\beta q$.
\item For any $p , q \in \mbb{P}_\beta$, if $p \leq_\beta q$, then $p \rst \alpha \leq_\alpha q \rst \alpha$,
and $p \leq_\beta p \rst \alpha^\frown q \rst [ \alpha , \beta )$.
\item If $\alpha$ is a limit ordinal, then for any $p,q \in \mbb{P}_\alpha$,
$p \leq_\alpha q$ if and only if $p \rst \gamma \leq_\gamma q \rst \gamma$ for all $\gamma < \alpha$.
\end{renumerate}

For an iteration $\langle \mbb{P}_\alpha \mid \alpha < \delta \rangle$,
we let $\leq_\alpha$, $1_\alpha$ and $\Vdash_\alpha$ denote $\leq_{\mbb{P}_\alpha}$,
$1_{\mbb{P}_\alpha}$ and $\Vdash_{\mbb{P}_\alpha}$, respectively.
Also, let $\dot{G}_\alpha$ denote the canonical name for a $\mbb{P}_\alpha$-generic filter.
A subscript $\alpha$ in $\leq_\alpha$, $1_\alpha$ and $\Vdash_\alpha$ is sometimes omitted
if it is clear from the context.
For $p \in \mbb{P}_\alpha$, $\alpha = \dom (p)$ will be denoted as $l(p)$.

Suppose $\langle \mbb{P}_\alpha \mid \alpha < \delta \rangle$ is an iteration,
$\alpha \leq \beta < \delta$, and $G_\alpha$ is a $\mbb{P}_\alpha$-generic filter over $V$.
In $V[ G_\alpha ]$, let $\mbb{P}_{\alpha , \beta}$ be the following forcing notion,
where $\leq_{\alpha , \beta}$ and $1_{\alpha , \beta}$ are its order and largest element, respectively.
\begin{renumerate}
\item $\mbb{P}_{\alpha , \beta} :=
\{ p \rst [ \alpha , \beta ) \mid p \in \mbb{P}_\beta \,\wedge\, p \rst \alpha \in G_\alpha \}$.
\item $p \leq_{\alpha , \beta} q$ if there are $p' , q' \in G_\alpha$ such that
$p'^\frown p \leq_\beta q'^\frown q$.
\item $1_{\alpha , \beta} := 1_\beta \rst [ \alpha , \beta )$.
\end{renumerate}
Then, $\mbb{P}_{\alpha , \beta}$ is a forcing notion in $V[ G_\alpha ]$.
(See \cite[Prop.~1.2]{Miyamoto_nice}.)
Let $\dot{\mbb{P}}_{\alpha , \beta}$ be a $\mbb{P}_\alpha$-name for $\mbb{P}_{\alpha , \beta}$.

If $G_\beta$ is a $\mbb{P}_\beta$-generic filter over $V$,
then $G_\beta \rst \alpha := \{ p \rst \alpha \mid p \in G_\beta \}$ is a $\mbb{P}_\alpha$-generic
filter over $V$, and $G_\beta \rst [ \alpha , \beta ) := \{ p \rst [ \alpha , \beta ) \mid p \in G_\beta \}$
is a $( \dot{\mbb{P}}_{\alpha , \beta} )^{G_\beta \upharpoonright \alpha}$-generic filter
over $V[ G_\beta \rst \alpha ]$.
Also, if $G_\alpha$ is a $\mbb{P}_\alpha$-generic filter over $V$, and
$H$ is a $( \dot{\mbb{P}}_{\alpha , \beta} )^{G_\alpha}$-generic filter over $V$, then
\[
G_\alpha * H := \{ p \in \mbb{P}_\beta \mid p \rst \alpha \in G_\alpha \,\wedge\, p \rst [ \alpha , \beta ) \in H \}
\]
is a $\mbb{P}_\beta$-generic filter over $V$. (See \cite[Prop.~1.3]{Miyamoto_nice}.)
So $\mbb{P}_\beta$ is forcing equivalent to $\mbb{P}_\alpha * \dot{\mbb{P}}_{\alpha , \beta}$.
In particular, $\mbb{P}_{\alpha + 1}$ is forcing equivalent to
$\mbb{P}_\alpha * \dot{\mbb{P}}_{\alpha , \alpha + 1}$.


\subsection{Subcomplete forcing}

We briefly review the notion of subcomplete forcing.
We recall a simplified version of the original definition, called \(\infty\)-subcompleteness, introduced by Fuchs-Switzer \cite{FS}.

The notion of ($\infty$-)subcomplete forcing involves models of set theory.
First, we give our notation on them.

As is usual, for a first order structure $M$, we often identify $M$ with its universe.
For example, if we write $a \in M$, then it means that $a$ belongs to the universe of $M$,
and if we say that $M$ is countable, then it means that the universe of $M$ is countable.

For structures $M$ and $N$ of the same language and for a function $\sigma : M \to N$,
let $\sigma : M \prec N$ denote that $\sigma$ is an elementary embedding from $M$ to $N$.

Let $\mcal{L}_{\mrm{ST}} = \{ \tilde{\in} \}$ be the language of Set Theory,
where $\tilde{\in}$ is a binary relation symbol for the $\in$-relation.
Let $\mcal{L}_{\mrm{ST}}^+ := \{ \tilde{\in} , \tilde{P} \}$, where $\tilde{P}$ is a unary predicate symbol.

Suppose $M$ is an $\mcal{L}_{\mrm{ST}}$-structure $\langle X , E \rangle$
or an $\mcal{L}_{\mrm{ST}}^+$-structure $\langle X , E , P \rangle$.
We say that $M$ is a model of $\mrm{ZF}^-$ ($\mrm{ZFC}^-$) if
$M$ satisfies all axioms of $\mrm{ZF}$ ($\mrm{ZFC}$) except for the Power Set Axiom.
Here, if $M$ is an $\mcal{L}_{\mrm{ST}}^+$-structure, then the Axiom Schemes of Replacement and Separation
are applied to all $\mcal{L}_{\mrm{ST}}^+$-formulas.
We say that $M$ is \emph{transitive} if $X$ is a transitive set, and $E = {\in} \cap ( X \times X )$.

For a set $A$ and an ordinal $\chi$, let $L^A_\chi$ be $\langle L_\chi [A] , {\in} , A \cap L_\chi [A] \rangle$.
Note that $L^A_\chi$ is a transitive $\mcal{L}_\mrm{ST}^+$-structure.

Suppose $M$ is a transitive $\mcal{L}_{\mrm{ST}}^+$-model of $\mrm{ZFC}^-$.
$M$ is said to be \emph{full} if there is a transitive $\mcal{L}_{\mrm{ST}}$-model $N$ of $\mrm{ZF}^-$
such that $M \in N$, and $M$ is regular in $N$, where we say that $M$ is \emph{regular} in $N$
if for any $x \in M$ and any function $f : x \to M$ with $f \in N$, we have $f \in M$.

Suppose $M = \langle X , {\in} , P \rangle$ is a transitive $\mcal{L}_{\mrm{ST}}^+$-model of $\msf{ZFC}^-$,
and $\mbb{P}$ is a forcing notion in $M$.
For a $\mbb{P}$-generic filter $G$ over $M$, let $M[G]$ denote the $\mcal{L}_{\mrm{ST}}^+$-structure
$\langle X[G] , {\in} , P \rangle$, where
$X [G] = \{ \dot{a}^G \mid \mbox{$\dot{a}$ is a $\mbb{P}$-name in $X$} \}$.
Note that $M[G]$ is a transitive $\mcal{L}_{\mrm{ST}}^+$-model of $\mrm{ZFC}^-$.
Note also that if $M$ is full, then $M[G]$ is also full.
In fact, if $N$ witnesses the fullness of $M$,
then $G$ is $\mbb{P}$-generic filter over $N$, and $N[G]$ witnesses the fullness of $M[G]$.

Now, we recall the notion of subcompleteness.
As we mentioned above, we recall a simplified version, which is called the $\infty$-subcompleteness,
introduced by Fuchs-Switzer \cite{FS}.

\begin{definition}[Fuchs-Switzer \cite{FS}] \label{def:infty_subcomplete}
Suppose $\mbb{P}$ is a forcing notion.

For a regular cardinal $\theta$ and $a \in \mcal{H}_\theta$,
we say that $\theta$ and $a$ \emph{verify the} $\infty$-\emph{subcompleteness} of $\mbb{P}$
if $\mbb{P} \in \mcal{H}_\theta$, and the following hold:
For any $A$, $\chi$, $\bar{M}$, $\bar{\mbb{P}}$, $\bar{b}$, $\sigma$, $b$ and $\bar{G}$, if
\begin{renumerate}
\item $A$ is a set, $\chi$ is an ordinal, and $\mcal{H}_\theta \subseteq L^A_\chi \models \mrm{ZFC}^-$,
\item $\bar{M}$ is a countable transitive full $\mcal{L}_{\mrm{ST}}^+$-model of $\msf{ZFC}^-$
with $\bar{\mbb{P}} , \bar{b} \in \bar{M}$,
\item $\sigma : \bar{M} \prec L^A_\chi$,  $a \in \ran ( \sigma )$ and
$\sigma ( \langle \bar{\mbb{P}} , \bar{b} \rangle ) = \langle \mbb{P} , b \rangle$,
\item $\bar{G}$ is a $\bar{\mbb{P}}$-generic filter over $\bar{M}$,
\end{renumerate}
then there is $p^* \in \mbb{P}$ which forces that there is $\sigma^* : \bar{M} \prec L^A_\chi$ with
$\sigma^* ( \langle \bar{\mbb{P}} , \bar{b} \rangle ) = \langle \mbb{P} , b \rangle$
and $\sigma^* [ \bar{G} ] \subseteq \dot{G}$,
where $\dot{G}$ is the canonical name for a $\mbb{P}$-generic filter.

We say that $\mbb{P}$ is $\infty$-\emph{subcomplete} if
there is a regular cardinal $\theta$ which verifies the $\infty$-subcompleteness of $\mbb{P}$.
\end{definition}

The above definition of $\infty$-subcompleteness is slightly different from the one in \cite[Definition 3.2]{FS},
which does not use the parameter $a \in \mcal{H}_\theta$. But, as is mentioned after Definition 3.2 in \cite{FS},
these definitions are equivalent. See also Jensen \cite[Chapter 3, Lemma 2.5]{Jensen4}
and Fuchs \cite[Observation 2.27]{Fuchs3}.

The original subcompleteness in Jensen \cite{Jensen1,Jensen2} has some additional condition
requiring $\sigma^*$ to have some similarity to $\sigma$.
This condition is omitted in the $\infty$-subcompleteness. So the $\infty$-subcompleteness
is weaker than the original subcompleteness.
This additional condition is used to show that all revised countable support iterations
of subcomplete forcings are subcomplete. See \cite{Jensen1,Jensen2} for details.
Fuchs-Switzer \cite{FS} proved that this condition is not necessary for nice iterations
developed by Miyamoto \cite{Miyamoto_nice}. Namely, they proved the following.

\begin{fact}[Fuchs-Switzer \cite{FS}] \label{fact:infty_subcomplete_nice_it}
All nice iterations of $\infty$-subcomplete forcings are $\infty$-subcomplete.
\end{fact}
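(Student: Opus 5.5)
The plan is to prove, by induction on the length $\delta$ of a nice iteration $\langle \mbb{P}_\alpha \mid \alpha < \delta \rangle$ of $\infty$-subcomplete forcings, a stronger ``relative'' statement from which the $\infty$-subcompleteness of each $\mbb{P}_\beta$ follows by taking $\alpha = 0$. The relative statement is as follows. Fix $\theta$ large enough to verify the $\infty$-subcompleteness of every iterand and to contain the iteration. Let $A, \chi, \bar{M}, \sigma$ be as in Definition \ref{def:infty_subcomplete}, with the iteration in $\ran(\sigma)$, and let $\bar{\alpha} \leq \bar{\beta}$ in $\bar{M}$ with $\sigma(\bar\alpha)=\alpha$, $\sigma(\bar\beta)=\beta$. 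Let $\bar{G}$ be $\bar{\mbb{P}}_{\bar\beta}$-generic over $\bar{M}$. Suppose $p \in \mbb{P}_\alpha$ forces that there is $\sigma' : \bar{M} \prec L^A_\chi$ with $\sigma'$ agreeing with $\sigma$ on the relevant parameters, in particular on any fixed finite set of parameters we wish to preserve, and with $\sigma'[\bar{G} \rst \bar\alpha] \subseteq \dot{G}_\alpha$. Then there is $p^* \in \mbb{P}_\beta$ with $p^* \rst \alpha = p$ forcing that some $\sigma^* : \bar{M} \prec L^A_\chi$ agrees on those parameters and satisfies $\sigma^*[\bar{G}] \subseteq \dot{G}_\beta$.

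\emph{Successor step.} For the step $\beta = \gamma+1$, first apply the induction hypothesis to get a condition at stage $\gamma$. Then work in $V[G_\gamma]$. Since $\sigma'[\bar G\rst\bar\gamma]\subseteq G_\gamma$, the embedding $\sigma'$ lifts to $\sigma'' : \bar{M}[\bar{G}\rst\bar\gamma] \prec L^A_\chi[G_\gamma]$. The model $\bar{M}[\bar{G}\rst\bar\gamma]$ is again countable, transitive and full, the latter by the remark on fullness of generic extensions. We may also arrange that $L^A_\chi[G_\gamma]$ has the form $L^{A'}_\chi$ containing $\mcal{H}_\theta^{V[G_\gamma]}$. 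Now the $\infty$-subcompleteness of $\dot{\mbb{P}}_{\gamma,\gamma+1}^{G_\gamma}$, applied with the quotient generic $\bar G\rst[\bar\gamma,\bar\gamma+1)$, gives a condition in the quotient. Here the parameter $a$ is chosen to code $G_\gamma$ and the finitely many parameters to be preserved. The resulting embedding, restricted to $\bar{M}$, fixes the right values and sends $\bar G$ into $G_{\gamma+1}$. Finally, glue this condition onto $p$ by the properties of iterations listed in the paper.

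\emph{Limit step.} Let $\bar\beta$ be a limit. The key point is that $\bar{M}$ is countable, so $\bar\beta$ has countable cofinality in $V$. Fix $\bar\alpha = \bar\alpha_0 < \bar\alpha_1 < \cdots$ cofinal in $\bar\beta$, and fix an enumeration $\langle x_n \rangle$ of $\bar{M}$. I would build conditions $p_n \in \mbb{P}_{\alpha_n'}$, where $\alpha_n'$ is a name for $\sigma_n(\bar\alpha_n)$, together with names $\dot\sigma_n$. The requirements are that each $p_{n+1}$ extends $p_n$ in the appropriate iteration sense, that $p_n$ forces $\dot\sigma_n[\bar G\rst\bar\alpha_n] \subseteq \dot G$, and that $\dot\sigma_{n+1}$ agrees with $\dot\sigma_n$ on $x_0,\dots,x_n$. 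Each step is an application of the induction hypothesis with these finitely many values as the preserved parameters. In the generic extension the $\sigma_n$ then converge pointwise to a map $\sigma^*$, which is elementary since each formula involves finitely many $x_i$. This $\sigma^*$ sends each $\bar G\rst\bar\alpha_n$, and hence all of $\bar G$, into $G$.

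\emph{Main obstacle.} The hard part is producing a single condition $p^*$ below all the $p_n$. The stages $\alpha_n' = \sigma_n(\bar\alpha_n)$ are only names, and the $p_n$ are only decided below antichains, so $p^*$ cannot simply be the union of the $p_n$. This is exactly where niceness is needed. At each step I would replace $p_n$ by a maximal antichain below $p_{n-1}$ deciding $\alpha_n'$ and the finitely many values of $\dot\sigma_n$, arranged into a nested antichain sequence in Miyamoto's sense. Miyamoto's gluing lemma for nice iterations then yields a limit condition $p^*$ that forces the generic to pass through the chosen branch of these antichains. I expect the bookkeeping here to be the delicate part: ensuring that the antichains are genuinely nested, that $p^* \rst \alpha = p$, and that the limit supremum lands at $\beta$.
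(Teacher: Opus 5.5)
\paragraph{Where the proposal breaks.} Your architecture matches the one the paper uses for the $\vec K$-version (Proposition \ref{prop:K_subcomplete_nice_it}), which it says is essentially the Fuchs--Switzer argument. That means a relative statement proved by induction on the length, a successor step that lifts to $\bar M[\bar G_{\bar\gamma}]$ with $G_\gamma$ coded into the predicate, and a limit step that glues conditions along a nested antichain via a mixture and takes $\sigma^*$ as a pointwise limit. The gap is in the sentence ``this $\sigma^*$ sends each $\bar G\rst\bar\alpha_n$, and hence all of $\bar G$, into $G$.''

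Elements of $\bar G$ are conditions $\bar r$ of full length $\bar\beta$. Your construction only yields $\sigma^*(\bar r)\rst\alpha_n\in G$ for all $n$, i.e.\ control of $G$ below $\lambda:=\sup_n\sigma^*(\bar\alpha_n)=\sup\sigma^*[\bar\beta]$. You cannot make $\lambda=\beta$ by bookkeeping. Since $\sigma^*[\bar\beta]$ is countable, $\lambda=\beta$ would mean $\mathrm{cf}(\beta)=\omega$ in $V[G_\beta]$. That fails whenever $\beta$ keeps uncountable cofinality, e.g.\ $\beta=\kappa$ in the proof of Theorem \ref{thm:con_K_scfa}, where $\kappa$ becomes $\omega_2$. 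Along the generic branch your mixture $p^*$ is trivial above $\lambda$, while $\sigma^*(\bar r)$ generally has nontrivial coordinates in $[\lambda,\beta)$. So $\sigma^*(\bar r)\in G_\beta$ needs an argument that your plan does not supply.

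\paragraph{What the paper does instead.} The missing ingredient is the fusion structure. Rather than fixing a cofinal sequence $\bar\alpha_n$, the paper enumerates $\bar G_{\bar\delta}=\{\bar r_n\mid n<\omega\}$. At each step it picks a full-length $\bar q'\in\bar G_{\bar\delta}$ below $\bar q^{(s,n)}$, below $\bar r_{n+1}$, and below a node of $\bar T^{(s,n)}_1$ met by $\bar G$. It then applies Lemma \ref{lem:hook} inside $\bar M$ to get a nested antichain $\bar T'\angle\,\bar T^{(s,n)}$ of which $\bar q'$ is a mixture, and lets the next stage be $\bar\beta'=l(\nart(\bar T'))$. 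The new embedding name is required to send $\bar q',\bar T'$, and all earlier such pairs, to fixed values $q',T'$. Then $\langle q^{(s,n)},T^{(s,n)}\rangle$ is a fusion structure of $S$, and Lemma \ref{lem:fusion} gives $q^{(s_n,n)}\in G_\delta$ along the generic branch. Hence $\sigma^*(\bar r_n)\geq\sigma^*(\bar q^{(s_n,n)})=q^{(s_n,n)}\in G_\delta$, with no assumption on cofinalities.

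\paragraph{A possible patch within your set-up.} Represent each $\bar r\in\bar G$ in $\bar M$ as a mixture of some $\bar T$. The $\sigma^*$-image of the branch of $\bar T$ followed by $\bar G$ is then a branch of $\sigma^*(\bar T)$ through $G$, with all lengths below $\lambda$. Clause (iv) of Definition \ref{def:mixture}, together with clause (v) of the definition of iteration, then gives $\sigma^*(\bar r)\in G_\beta$. Either way this is a genuine additional argument, not bookkeeping.

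\paragraph{A smaller point.} Your relative statement presupposes a ground-model $\sigma$ with $\sigma(\bar\alpha)=\alpha$. No such $\sigma$ is available at the inner steps, where $\alpha'_n$ is only decided below a condition. The statement should be phrased as in Definition \ref{def:relative_verify_K_subcomplete}, with $p$ merely forcing the existence of a suitable embedding.
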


\noindent
It should be noted here that a similar result for subproper forcings was obtained
by Miyamoto \cite{Miyamoto_subproper} before.

Also, as far as we know, $\infty$-subcomplete forcings have all important properties of subcomplete forcings.
For example, $\infty$-subcomplete forcings add no reals, preserve stationary subsets of $\omega_1$
and preserve $\diamondsuit_{\omega_1}$.

Now, we turn our attention to the forcing axiom for $\infty$-subcomplete forcings.
By Fact \ref{fact:infty_subcomplete_nice_it} and the standard argument for the consistency proof
of forcing axioms, we can prove its consistency.

\begin{definition} \label{def:infty_scfa}
\emph{The ($\infty$-)Subcomplete Forcing Axiom}, denoted as $($$\infty$-$)$$\msf{SCFA}$,
is the following assertion.
\begin{quote}
For any ($\infty$-)subcomplete forcing notion $\mbb{P}$
and any family $\mcal{D}$ of dense subsets of $\mbb{P}$ with $| \mcal{D} | \leq \omega_1$,
there is a filter $g$ on $\mbb{P}$ such that $g \cap D \neq \emptyset$ for any $D \in \mcal{D}$.
\end{quote}
\end{definition}

\begin{fact}[Fuchs-Switzer \cite{FS}] \label{fact:infty_scfa}
Assume there is a supercompact cardinal.
Then there is a forcing extension in which $\infty$-$\msf{SCFA}$ holds.
\end{fact}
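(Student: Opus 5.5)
The plan is the standard Baumgartner-style argument, with Fact \ref{fact:infty_subcomplete_nice_it} used in place of the preservation theorem for countable support iterations of proper forcings. Fix a supercompact cardinal $\kappa$ and a Laver function $f : \kappa \to V_\kappa$. We build a nice iteration $\langle \mbb{P}_\alpha \mid \alpha \le \kappa \rangle$ of length $\kappa$ (in Miyamoto's sense \cite{Miyamoto_nice}). At stage $\alpha$, if $f(\alpha)$ is a $\mbb{P}_\alpha$-name for an $\infty$-subcomplete forcing notion, the next iterand is that name. Otherwise the next iterand is trivial forcing. Each iterand is $\infty$-subcomplete, so by Fact \ref{fact:infty_subcomplete_nice_it} every initial segment $\mbb{P}_\alpha$, and $\mbb{P}_\kappa$ itself, is $\infty$-subcomplete.

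First I check the cardinal arithmetic. Each $\mbb{P}_\alpha$ for $\alpha<\kappa$ can be taken in $V_\kappa$, and nice iterations at an inaccessible stage have the $\kappa$-c.c. (a $\Delta$-system argument on the supports, as in Miyamoto's framework). Since $\mbb{P}_\kappa$ is $\infty$-subcomplete, it adds no reals and preserves $\omega_1$. Standard collapsing at the trivial stages (one may interleave $\mrm{Col}(\omega_1,\cdot)$, which is $\sigma$-closed and hence $\infty$-subcomplete) then gives $\kappa=\omega_2$ in $V[G_\kappa]$.

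Next comes the main step, verifying $\infty$-$\msf{SCFA}$ in $V[G_\kappa]$. Let $\mbb{Q}$ be $\infty$-subcomplete there and $\mcal{D}=\{D_i\mid i<\omega_1\}$ a family of dense sets, with names $\dot{\mbb{Q}},\dot{\mcal{D}}$. Choose $\lambda$ large, with $\lambda$ above a regular $\theta$ verifying $\infty$-subcompleteness. Take a $\lambda$-supercompact embedding $j:V\to M$ with $j(f)(\kappa)=\dot{\mbb{Q}}$. The main obstacle is to check that $M$ sees $\dot{\mbb{Q}}$ as a name for an $\infty$-subcomplete forcing over $V[G_\kappa]$. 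The concern is that the definition quantifies over all $A,\chi$ with $\mcal{H}_\theta\subseteq L^A_\chi$ and over countable full $\bar M$, so it is not obviously absolute to $M$. I expect to handle it as follows. Since ${}^{\lambda}M\subseteq M$ and $\lambda \gg \theta$, we have $\mcal{H}_\theta^{V[G_\kappa]}=\mcal{H}_\theta^{M[G_\kappa]}$. Any instance in $M[G_\kappa]$ is an instance in $V[G_\kappa]$, and countable objects, fullness witnesses and generic filters agree because of closure of $M[G_\kappa]$ under $\lambda$-sequences. The conclusion "some $p^*$ forces the existence of $\sigma^*$" concerns a set-sized statement about $L^A_\chi$ and transfers downward. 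So $\theta$ and the same parameter verify $\infty$-subcompleteness in $M[G_\kappa]$. Hence $j(\mbb{P})_{\kappa}=\mbb{P}_\kappa$ and the $\kappa$-th iterand of $j(\mbb{P})$ is $\dot{\mbb{Q}}$.

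Finally, let $H$ be $\mbb{Q}$-generic over $V[G_\kappa]$ and $K$ generic for the tail. We lift $j$ to $j:V[G_\kappa]\to M[G_\kappa*H*K]$. For this, the lift requires $j[G_\kappa]\subseteq G_\kappa*H*K$, which holds because conditions in $\mbb{P}_\kappa$ have supports bounded below $\kappa$ in the relevant sense. If they do not, we use that $j\rst\mbb{P}_\kappa$ is the identity on initial parts together with the standard nice-iteration argument, forcing below a master condition. Then $j[H]\in M[G_\kappa*H*K]$, because $|H|\le\lambda$ and we use closure. So $j[H]$ generates a filter on $j(\mbb{Q})$ meeting $j(D_i)=j(\mcal{D})_{i}$ for each $i<\omega_1=\kappa'$, where $j(\mcal D)$ is indexed by $\omega_1$ since $\omega_1<\kappa$. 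Thus $M[G_\kappa*H*K]$ satisfies that there is a filter on $j(\mbb{Q})$ meeting every member of $j(\mcal{D})$. By elementarity, such a filter on $\mbb{Q}$ meeting all $D_i$ exists in $V[G_\kappa]$.
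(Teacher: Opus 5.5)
Your proposal is the standard Laver-function argument run on a nice iteration, with Fact~\ref{fact:infty_subcomplete_nice_it} in place of the properness iteration theorem. That is exactly the ``standard argument'' the paper invokes for this fact (and sketches for Theorem~\ref{thm:con_K_scfa}), so it is correct and follows the same route. One correction: nice iterations have no supports for a $\Delta$-system argument. What you actually use is that each $\mbb{P}_\alpha$ ($\alpha<\kappa$) lies in $V_\kappa$, so every nested antichain in $\langle \mbb{P}_\alpha \mid \alpha<\kappa \rangle$ has size $<\kappa$ and lies in $V_\kappa$; this makes $j(p)$ equivalent to $p$ followed by trivial coordinates, so no master condition is needed. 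The $\kappa$-c.c.\ rests on direct limits being taken at stationarily many stages, not on a $\Delta$-system, and you only need it for side facts such as $\kappa=\omega_2$.
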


In fact, since $\infty$-subcomplete forcings preserve $\diamondsuit_{\omega_1}$,
if $\diamondsuit_{\omega_1}$ holds in the ground model, then $\diamondsuit_{\omega_1}$ also holds in the extension.
Note also $\infty$-$\msf{SCFA}$ implies $\msf{SCFA}$
since all subcomplete forcing notions are $\infty$-subcomplete.


\section{$\vec{K}$-subcomplete forcing} \label{sec:K_subcomplete}

In this section, we introduce the notion of $\vec{K}$-subcomplete forcing and study its basic properties.
The notion of $\vec{K}$-subcomplete forcing originates from the proof, due to Jensen \cite{Jensen3}, of the fact that
subcomplete forcings preserve $\diamondsuit_{\omega_1}$.
It is defined for an adequate model sequence $\vec{K}$,
which is a guessing sequence on $\omega_1$.
Roughly speaking, it is obtained by restricting $\bar{M}$ and $\bar{G}$
in the $\infty$-subcompleteness to those captured by $\vec{K}$.

In \S \ref{subsec:adequate}, we introduce the notion of adequate model sequences
and study its basic properties.
In \S \ref{subsec:K_subcomplete}, we introduce the notion of $\vec{K}$-subcomplete forcings
and study its basic properties.


\subsection{Adequate model sequences} \label{subsec:adequate}

First, we introduce adequate model sequences and $\dmnd$-model sequences.
A similar model sequence was also used in Jensen \cite{Jensen3} to prove the preservation of $\diamondsuit_{\omega_1}$
by subcomplete forcings.

\begin{definition} \label{def:model_seq}
A sequence $\vec{K} = \langle K_\xi \mid \xi < \omega_1 \rangle$ is called
an \emph{adequate model sequence} if
\begin{renumerate}
\item for each $\xi < \omega_1$, $K_\xi$ is a transitive $\mcal{L}_{\mrm{ST}}$-model $\mrm{ZFC}^-$,
$\xi \in K_\xi$, and $\xi$ is countable in $K_\xi$,
\item for any $B \subseteq \omega_1$, there are stationary many $\xi < \omega_1$
with $B \cap \xi \in K_\xi$.
\end{renumerate}
An adequate model sequence $\vec{K} = \langle K_\xi \mid \xi < \omega_1 \rangle$
is called a $\dmnd$-\emph{model sequence} if
\begin{renumerate}
\addtocounter{enumi}{2}
\item $K_\xi$ is countable for each $\xi < \omega_1$.
\end{renumerate}
\end{definition}

Note that if $\vec{K} = \langle K_\xi \mid \xi < \omega_1 \rangle$ satisfies (i) of the above definition,
and $\mcal{H}_{\omega_1} \subseteq K_\xi$ for all $\xi < \omega_1$,
then $\vec{K}$ is an adequate model sequence.

The $\vec{K}$-subcompleteness is defined for an adequate model sequence $\vec{K}$.
Before we give its definition, we study basic properties of adequate model sequences.

\begin{lemma} \label{lem:model_seq_basic}
\begin{aenumerate}
\item A $\dmnd$-model sequence exists if and only if $\dmnd_{\omega_1}$ holds.
\item Suppose $\vec{K} = \langle K_\xi \mid \xi < \omega_1 \rangle$ is an adequate model sequence,
and let $F$ be the set of all $D \subseteq \omega_1$ such that
$\{ \xi < \omega_1 \mid B \cap \xi \in K_\xi \} \cap C \subseteq D$ for some $B \subseteq \omega_1$
and some club $C \subseteq \omega_1$.
Then, $F$ is a normal filter over $\omega_1$.
\end{aenumerate}
\end{lemma}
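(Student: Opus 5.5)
For (a), I will prove both directions by going through the equivalence $\dmnd_{\omega_1}^- \Leftrightarrow \dmnd_{\omega_1}$ recalled in the preliminaries.

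Suppose first that $\vec{K}$ is a $\dmnd$-model sequence. Put $\mcal{B}_\xi := \mcal{P}(\xi) \cap K_\xi$. Each $\mcal{B}_\xi$ is countable because $K_\xi$ is countable by condition (iii). Condition (ii) says exactly that for every $B \subseteq \omega_1$ the set $\{\xi \mid B\cap\xi\in\mcal{B}_\xi\}$ is stationary. Hence $\langle \mcal{B}_\xi \mid \xi<\omega_1\rangle$ is a $\dmnd^-_{\omega_1}$-sequence, and $\dmnd_{\omega_1}$ follows.

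Conversely, let $\langle b_\xi \mid \xi<\omega_1\rangle$ be a $\dmnd_{\omega_1}$-sequence. For each $\xi$, choose a countable transitive model $K_\xi$ of $\mrm{ZFC}^-$ with $\xi, b_\xi \in K_\xi$ in which $\xi$ is countable. One way to get it: take a countable $X \prec \mcal{H}_{\omega_2}$ containing $\xi$ and $b_\xi$, and let $K_\xi$ be its transitive collapse. Since $\xi, b_\xi \subseteq \omega_1$ are countable sets of countable ordinals, and $\omega\cup\{\xi\}\cup b_\xi\subseteq X$ can be arranged, the collapse fixes $\xi$ and $b_\xi$. Moreover $\mcal{H}_{\omega_2}$ sees a surjection $\omega\to\xi$, so $\xi$ is countable in $K_\xi$. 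Conditions (i) and (iii) are then clear, and (ii) holds because $B\cap\xi=b_\xi\in K_\xi$ for stationarily many $\xi$.

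For (b), I will verify the filter axioms and normality in turn. Write $S_B := \{\xi \mid B\cap\xi\in K_\xi\}$.

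The set $F$ is upward closed by definition. It contains $\omega_1$, witnessed by $B=\emptyset$ and $C=\omega_1$. It does not contain $\emptyset$, since $S_B\cap C$ is nonempty by the stationarity in (ii).

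For closure under intersections, given $S_{B_1}\cap C_1$ and $S_{B_2}\cap C_2$, I code $B_1$ and $B_2$ into a single set. Let $B := \{2\alpha \mid \alpha\in B_1\}\cup\{2\alpha+1\mid \alpha\in B_2\}$, using ordinal multiplication, and let $C_0$ be the club of limit ordinals $\xi$. For $\xi\in C_0$, the set $B\cap\xi$ is computed from the pair $B_1\cap\xi$, $B_2\cap\xi$ and conversely. Since $K_\xi\models \mrm{ZFC}^-$ contains $\xi$, this decoding is absolute for the transitive model $K_\xi$, so $B\cap\xi\in K_\xi$ gives $B_i\cap\xi\in K_\xi$ for $i=1,2$. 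Hence $S_B\cap C_0\cap C_1\cap C_2\subseteq S_{B_1}\cap S_{B_2}\cap C_1\cap C_2$, which shows the intersection is in $F$. The same coding handles countable intersections, by coding $\omega$ many sets via $\omega\cdot\alpha+n$ and restricting to the club of $\xi$ closed under $\alpha\mapsto\omega\cdot\alpha+\omega$.

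The main step is normality, equivalently closure under diagonal intersections. Take $D_\alpha \supseteq S_{B_\alpha}\cap C_\alpha$ for $\alpha<\omega_1$.
\begin{itemize}
\item Code the sequence as $B := \{\langle\alpha,\beta\rangle \mid \beta\in B_\alpha\}$, using Gödel pairing on $\omega_1$.
\item Let $C$ consist of the $\xi\in\triangle_\alpha C_\alpha$ that are closed under pairing.
\item For $\xi\in C\cap S_B$ and $\alpha<\xi$, the model $K_\xi$ contains $B\cap\xi$, and from it $K_\xi$ computes the section $\{\beta<\xi \mid \langle\alpha,\beta\rangle\in B\}=B_\alpha\cap\xi$, using only that $\alpha\in\xi\subseteq K_\xi$.
\item Since also $\xi\in C_\alpha$, we get $\xi\in D_\alpha$.
\end{itemize}
Thus $S_B\cap C\subseteq \triangle_\alpha D_\alpha$, so the diagonal intersection lies in $F$. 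The only delicate point is checking that the Gödel pairing restricted to $\xi\times\xi$ is computed correctly inside the transitive model $K_\xi$ of $\mrm{ZFC}^-$. This holds because the pairing function is $\Delta_1$-definable, hence absolute.
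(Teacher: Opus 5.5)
Your proposal is correct and follows essentially the paper's proof. In (a), $\vec K$ itself witnesses $\dmnd^-_{\omega_1}$, and conversely one takes countable models containing $b_\xi$ and $\xi$; the paper uses $K_\xi \prec \mcal{H}_{\omega_1}$ directly, which is automatically transitive, instead of collapsing a hull of $\mcal{H}_{\omega_2}$. In (b), normality is obtained as you do: code $\langle B_\eta \mid \eta<\omega_1\rangle$ into one set via a pairing function and intersect with the club of closure points and $\Delta_{\eta<\omega_1} C_\eta$. Your use of the absolute G\"odel pairing is the right choice. The paper takes an arbitrary bijection $\Gamma$, and for such a $\Gamma$ the model $K_\xi$ need not be able to decode $B_\eta\cap\xi$ from $B\cap\xi$.
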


\begin{proof}
(1) If there is a $\dmnd$-model sequence $\vec{K}$,
then $\vec{K}$ witnesses $\dmnd_{\omega_1}^-$, which is equivalent to $\dmnd_{\omega_1}$.
Conversely, suppose $\dmnd_{\omega_1}$ holds. Let $\langle b_\xi \mid \xi < \omega_1 \rangle$
be a $\dmnd_{\omega_1}$-sequence, and for each $\xi < \omega_1$ take a countable $K_\xi$
such that $b_\xi , \xi \in K_\xi \prec \langle \mcal{H}_{\omega_1} , {\in} \rangle$.
Then $\langle K_\xi \mid \xi < \omega_1 \rangle$ is a $\dmnd$-model sequence.

\medskip

\noindent
(2) We only prove the normality of $F$. The other properties are easily checked.

Suppose $\{ D_\eta \mid \eta < \omega_1 \} \subseteq F$.
We show that $D := \Delta_{\eta < \omega_1} D_\eta \in F$.
For each $\eta < \omega_1$, take $B_\eta \subseteq \omega_1$ and a club $C_\eta \subseteq \omega_1$
witnessing $D_\eta \in F$.
Let $R$ be the set of all $\langle \eta , \zeta \rangle \in \omega_1 \times \omega_1$ such that $\zeta \in B_\eta$.
Take a bijection $\Gamma : \omega_1 \times \omega_1 \to \omega_1$, and let $B$ be $\Gamma [R]$
and $C'$ be the set of all $\xi < \omega_1$ with $\Gamma [ \xi \times \xi ] = \xi$.
Then $C'$ is club in $\omega_1$.
Moreover, if $\xi \in C'$ and $B \cap \xi \in K_\xi$, then $B_\eta \cap \xi \in K_\xi$ for all $\eta < \xi$.
Let $C := C' \cap \Delta_{\eta < \omega_1} C_\eta$.

Then, $B$ and $C$ witnesses that $D \in F$:
Assume $\xi \in C$ and $B \cap \xi \in K_\xi$. We must show that $\xi \in D_\eta$ for all $\eta < \xi$.
Fix $\eta < \xi$. Since $\xi \in C'$ and $B \cap \xi \in K_\xi$, we have $B_\eta \cap \xi \in K_\xi$.
Also, $\xi \in C_\eta$ since $\xi \in C$. So $\xi \in D_\eta$.
\end{proof}

A definition and a lemma below are important in $\vec{K}$-subcomplete forcings.

\begin{definition} \label{def:capture}
Suppose $\vec{K} = \langle K_\xi \mid \xi < \omega_1 \rangle$ is an adequate model sequence.
We call $\bar{M}$ a $\vec{K}$-\emph{good model} if
$\bar{M}$ is a transitive full $\mcal{L}_{\mrm{ST}}^+$-model of $\mrm{ZFC}^-$ such that
$\bar{M} \in K_{\omega_1^{\bar{M}}}$, and $\bar{M}$ is countable in $K_{\omega_1^{\bar{M}}}$.
\end{definition}

\begin{lemma} \label{lem:adequate_capture}
Let $\vec{K} = \langle K_\xi \mid \xi < \omega_1 \rangle$ be an adequate model sequence.
Suppose $M = L^A_\chi$ for some set $A$ and some regular cardinal $\chi$,
and $\mcal{H}_{\omega_1} \in M$. Let $a \in M$.
Then there are a $\vec{K}$-good model $\bar{M}$ and $\sigma : \bar{M} \prec M$
with $a \in \ran ( \sigma )$.
\end{lemma}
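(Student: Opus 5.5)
Build a continuous increasing chain of countable elementary submodels of $M$ of length $\omega_1$, code the whole chain by a subset $B\subseteq\omega_1$, and use clause (ii) of Definition \ref{def:model_seq} to find a stage $\xi$ at which $B\cap\xi\in K_\xi$. From $B\cap\xi$ we then reconstruct, inside $K_\xi$, the transitive collapse of the $\xi$-th submodel; that collapse will be the required $\bar{M}$.

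First, fix a continuous $\in$-chain $\langle X_\nu \mid \nu<\omega_1\rangle$ of countable elementary submodels of $M$. We take $a,\mcal{H}_{\omega_1}\in X_0$ and $X_\nu\in X_{\nu+1}$. Let $\delta_\nu := X_\nu\cap\omega_1$, which is an ordinal since each $X_\nu$ is countable and elementary. The set of $\nu$ with $\delta_\nu=\nu$ is club. Let $\bar{M}_\nu$ be the transitive collapse of $X_\nu$, with inverse collapse $\sigma_\nu:\bar{M}_\nu\prec M$, so that $a\in\ran(\sigma_\nu)$. Then $\omega_1^{\bar{M}_\nu}=\delta_\nu$.

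Fullness of $\bar{M}_\nu$ is obtained as in the standard subcompleteness argument. Since $\chi$ is regular and uncountable, $L_\chi[A]$ is regular in some transitive $\mrm{ZF}^-$-model $N$ (for instance $H_{\chi^+}$), and $M\in N$. We then take $X_\nu$ to be of the form $Y_\nu\cap M$, where $Y_\nu\prec N$ is countable with $M\in Y_\nu$. Collapsing $Y_\nu$ gives a transitive $\bar{N}$ in which $\bar{M}_\nu$ is regular, by elementarity. So we arrange that the chain is really $\langle Y_\nu\rangle$ in $N$, and that $\langle Y_\mu\mid\mu<\nu\rangle\in Y_{\nu+1}$.

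Next comes the coding. Each $\bar{M}_\nu$ is a countable transitive structure, so the structure $\langle \bar{M}_\nu,\in,P\rangle$ together with $\bar{N}_\nu$ is coded by a real $r_\nu\subseteq\omega$ via a bijection with $\omega$. Choose $B\subseteq\omega_1$ coding the sequence $\langle r_\nu\mid\nu<\omega_1\rangle$, for example $B=\{\omega\cdot\nu+n \mid n\in r_\nu\}$. By clause (ii) of Definition \ref{def:model_seq} applied to $B$, the set of $\xi$ with $B\cap\xi\in K_\xi$ is stationary. It therefore meets the club of limit $\xi$ that satisfy $\omega\cdot\xi=\xi$ and $\delta_\xi=\xi$. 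Fix such a $\xi$.

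It remains to reconstruct $\bar{M}_\xi$ inside $K_\xi$. From $B\cap\xi\in K_\xi$ we decode $\langle r_\nu\mid\nu<\xi\rangle\in K_\xi$, and decoding each real gives $\langle\bar{M}_\nu\mid\nu<\xi\rangle$ inside $K_\xi$. The model $K_\xi$ is a transitive $\mrm{ZFC}^-$ model, so this decoding (Mostowski collapse of a well-founded relation on $\omega$) is absolute. By continuity, $X_\xi=\bigcup_{\nu<\xi}X_\nu$. We also need the connecting embeddings $\bar{M}_\nu\to\bar{M}_\mu$, and these must be recoverable. To guarantee this, we code into $r_\mu$ the maps $\sigma_\mu^{-1}\circ\sigma_\nu$ for $\nu<\mu$; since $\bar{M}_\mu$ contains $\bar{X}_\nu$ as an element, this is just a countable amount of data. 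Then $\bar{M}_\xi$ is the direct limit of this system, computed in $K_\xi$. Because $\xi$ is countable in $K_\xi$, the direct limit is countable in $K_\xi$, so $\bar{M}_\xi\in K_\xi$ and is countable there. Since $\omega_1^{\bar{M}_\xi}=\delta_\xi=\xi$, the model $\bar{M}:=\bar{M}_\xi$ is $\vec{K}$-good, and $\sigma:=\sigma_\xi$ works.

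The main obstacle is making this direct limit transitive and well-founded as computed in $K_\xi$, and ensuring it matches the actual collapse. Both follow by absoluteness, since the direct limit is isomorphic to the well-founded collapse of $X_\xi$. The fullness witness $\bar{N}_\xi$ also needs care: the claim that $\bar{M}_\xi$ is regular in $\bar{N}_\xi$ follows from elementarity of $Y_\xi$ in $N$.
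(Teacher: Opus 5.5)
Apart from the fullness issue below, your coding-and-reconstruction argument works and is a variant of the paper's. The paper avoids direct limits. It fixes one $N''\prec N$ of size $\omega_1$ with $\omega_1\cup\{a,M\}\subseteq N''$ and transports it along a bijection $h:\omega_1\to N''$ to a relation $E$ on $\omega_1$. At a $\xi$ with $J\rst\xi\in K_\xi$, $J\rst\xi\prec J$ and $h[\xi]\cap\omega_1=\xi$, it then collapses $J\rst\xi$ directly inside $K_\xi$. You instead code a chain of countable collapses by reals, together with the maps $\sigma_\mu^{-1}\circ\sigma_\nu$, and reassemble them in $K_\xi$ as a direct limit. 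This reaches the same $\bar{M}_\xi$ by the same absoluteness of well-founded collapses, with more bookkeeping.

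\textbf{Gap: fullness.} This is exactly where the regularity of $\chi$ is needed, and your suggested witness is wrong: $L_\chi[A]$ is in general not regular in $H_{\chi^+}$. The hypothesis $\mathcal{H}_{\omega_1}\in M$ only puts the reals into $M$. If some $S\subseteq\omega_1$ is not in $L_\chi[A]$, which is consistent with the hypotheses, then the characteristic function of $S$ maps $\omega_1\in M$ into $M$ and lies in $H_{\chi^+}$, but not in $M$.

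You need an actual argument, such as the paper's. Assume $A\subseteq L_\chi[A]$ and let $N=L_\lambda[A]$ for a regular $\lambda>\chi$. Given $x\in M$ and $f:x\to M$ in $N$, take $N'\prec N$ with $f,A,\chi\in N'$, $|N'|<\chi$ and $\alpha:=N'\cap\chi\in\chi$; this is where regularity of $\chi$ is used. Then $N'\cap M$ is transitive and contains $f$ as a subset. The transitive collapse of $N'$ is $L_\beta[A\cap L_\alpha[A]]$ for some $\beta<\chi$, so it is contained in $M$, and it contains $f$ because the collapse fixes $f$.

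With this $N$, your demands $X_\nu\in X_{\nu+1}$ and $\langle Y_\mu\mid\mu<\nu\rangle\in Y_{\nu+1}$ are no longer automatic, since countable subsets of $M$ or of $L_\lambda[A]$ need not be elements of them. You can simply drop them. A continuous $\subseteq$-chain with $\nu\in Y_{\nu+1}$ still yields club many $\nu$ with $Y_\nu\cap\omega_1=\nu$, and your reconstruction uses only continuity and the coded maps.
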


\begin{proof}
We may assume that $A \subseteq L_\chi [A]$.
Take a regular cardinal $\lambda > \chi$, and let $N := \langle L_\lambda [A] , {\in} \rangle$.
Note that $M \in N \models \mrm{ZFC}^-$. Moreover, $M$ is regular in $N$:

Suppose $x \in M$, $f : x \to M$ and $f \in N$. We show that $f \in M$.
Take $N' \prec N$ with $f , A , \chi \in N'$ and $\alpha := N' \cap \chi \in \chi$.
Note that $f \subseteq N'$ since $f \in N' \prec N$, and $|f|^N = |x|^N \in N' \cap \chi \subseteq N'$.
Let $\pi : N' \to N''$ be the transitive collapse.
Then $f = \pi (f) \in N''$.
Note also that $N'' = \langle L_\beta [ \pi (A) ] , {\in} \rangle$ for some $\beta < \chi$
and $\pi (A) = A \cap L_\alpha [A] \in M$. So $N'' \subseteq M$. Hence $f \in M$.

We will find $\bar{N}$ and $\tau$ such that
\begin{renumerate}
\item $\bar{N}$ is a countable transitive $\mcal{L}_{\mrm{ST}}$-model of $\mrm{ZFC}^-$,
$\bar{N} \in K_{\omega_1^{\bar{N}}}$, and $\bar{N}$ is countable in $K_{\omega_1^{\bar{N}}}$,
\item $\tau : \bar{N} \prec N$, and $a , M \in \ran ( \tau )$.
\end{renumerate}
(We do not require that $\bar{N}$ is full.)
If such $\bar{N}$ and $\tau$ are found, then $\bar{M} := \tau^{-1} ( M )$ and $\sigma := \tau \rst \bar{M}$
witnesses the lemma:
$\bar{M}$ is full since $\bar{M} = \tau^{-1} (M) \in \bar{N} \models \mrm{ZFC}^-$,
and $\bar{M}$ is regular in $\bar{N}$ by the regularity of $M$ in $N$.
Note also that $\omega_1^M = \omega_1^N = \omega_1$ since $\mcal{H}_{\omega_1} \in M$,
and so $\omega_1^{\bar{M}} = \omega_1^{\bar{N}}$. Then $\bar{M}$ and $\sigma$ are as desired clearly.

We construct $\bar{N}$ and $\tau$ satisfying (i) and (ii).
Take $N'' \prec N$ of cardinality $\omega_1$ with $\omega_1 \cup \{ a , M \} \subseteq N''$.
Take a bijection $h : \omega_1 \to N''$ with $h(0) = \{ a , M \}$, and let
$E$ be the pull-back of ${\in} \cap ( N'' \times N'' )$ by $h$.
So $h$ is an isomorphism from $J := \langle \omega_1 , E \rangle$ to $N''$.

By the adequateness of $\vec{K}$,
there are stationary many $\xi < \omega_1$ such that $J \rst \xi \in K_\xi$.
Note also that there are club many $\xi < \omega_1$ such that
$J \rst \xi \prec J$ and $h[ \xi ] \cap \omega_1 = \xi$.
So we can take $\xi > 0$ such that $J \rst \xi \in K_\xi$, $J \rst \xi \prec J$
and $h[ \xi ] \cap \omega_1 = \xi$.

Let $\rho : J \rst \xi \to \bar{N}$ be the transitive collapse.
Also, let $\tau := h \circ \rho^{-1} : \bar{N} \to N$.
We claim that $\bar{N}$ and $\tau$ are as desired.
Clearly, they satisfy (ii).

We check (i).
Note that $\bar{N} \in K_\xi$ since $J \rst \xi \in K_\xi$,
and $K_\xi$ is a transitive model of $\msf{ZFC}^-$.
Moreover $\bar{N}$ is countable in $K_\xi$ since $\xi$ is countable in $K_\xi$.
So it suffices to prove that $\xi = \omega_1^{\bar{N}}$.
For this, note that $\tau : \bar{N} \cong N \rst h[ \xi ]$ is the inverse of the transitive collapse of $N \rst h[ \xi ]$.
Then, $\omega_1^{\bar{N}} = h[ \xi ] \cap \omega_1^N = h[ \xi ] \cap \omega_1 = \xi$.
\end{proof}


\subsection{$\vec{K}$-subcomplete forcings} \label{subsec:K_subcomplete}

First, we introduce the notion of $\vec{K}$-subcomplete forcings.

As we mentioned before, it is obtained from the $\infty$-subcompleteness by restricting
$\bar{M}$ and $\bar{G}$ to those captured by $\vec{K}$.
Note that if $\bar{M}$ is a $\vec{K}$-good model, and $\bar{\mbb{P}}$ is a forcing notion in $\bar{M}$,
then there is a $\bar{\mbb{P}}$-generic filter $\bar{G} \in K_{\omega_1^{\bar{M}}}$ over $\bar{M}$.

\begin{definition} \label{def:K_subcomplete}
Suppose $\mbb{P}$ is a forcing notion and
$\vec{K} = \langle K_\xi \mid \xi < \omega_1 \rangle$ is an adequate model sequence.

For a regular cardinal $\theta$ and $a \in \mcal{H}_\theta$,
we say that $\theta$ and $a$ \emph{verify the} $\vec{K}$-\emph{subcompleteness} of $\mbb{P}$
if $\mbb{P} \in \mcal{H}_\theta$, and the following hold:
For any $A$, $\chi$, $\bar{M}$, $\bar{\mbb{P}}$, $\bar{b}$, $\sigma$, $b$ and $\bar{G}$, if
\begin{renumerate}
\item $A$ is a set, $\chi$ is an ordinal, and $\mcal{H}_\theta \subseteq L^A_\chi \models \mrm{ZFC}^-$,
\item $\bar{M}$ is a $\vec{K}$-good model, and $\bar{\mbb{P}} , \bar{b} \in \bar{M}$,
\item $\sigma : \bar{M} \prec L^A_\chi$, $a \in \ran ( \sigma )$,
and $\sigma ( \langle \bar{\mbb{P}} , \bar{b} \rangle ) = \langle \mbb{P} , b \rangle$,
\item $\bar{G}$ is a $\bar{\mbb{P}}$-generic filter over $\bar{M}$
with $\bar{G} \in K_{\omega_1^{\bar{M}}}$,
\end{renumerate}
then there is $p^* \in \mbb{P}$ which forces that there is
$\sigma^* : \bar{M} \prec L^A_\chi$ with
$\sigma^* ( \langle \bar{\mbb{P}} , \bar{b} \rangle ) = \langle \mbb{P} , b \rangle$
and $\sigma^* [ \bar{G} ] \subseteq \dot{G}$,
where $\dot{G}$ is the canonical name for a $\mbb{P}$-generic filter.

We say that $\mbb{P}$ is $\vec{K}$-\emph{subcomplete} if
there are a regular cardinal $\theta$ and $a \in \mcal{H}_\theta$
which verify the $\vec{K}$-subcompleteness of $\mbb{P}$.
\end{definition}

We make some remarks on the above definition.
\begin{aenumerate}
\item Every $\infty$-subcomplete forcing notion is $\vec{K}$-subcomplete
for any adequate model sequence $\vec{K}$.
\item Suppose $\vec{K} = \langle K_\xi \mid \xi < \omega_1 \rangle$ is an adequate model sequence
with $\mcal{H}_{\omega_1} \subseteq K_\xi$ for all $\xi < \omega_1$.
Then, the $\vec{K}$-subcompleteness is equivalent to $\infty$-subcompleteness.
\item If $\theta$ and $a$ verify the $\vec{K}$-subcompleteness of $\mbb{P}$,
then any regular cardinal $\theta ' \geq \theta$ together with $a$ verify
the $\vec{K}$-subcompleteness of $\mbb{P}$.
\item Let $\mbb{P} , A , \chi , \bar{M} , \bar{b} , \sigma , b, \bar{G} , p^*$ be
as in Definition \ref{def:K_subcomplete}. Suppose $G$ is a $\mbb{P}$-generic filter over $V$ with $p^* \in G$,
and in $V[G]$ let $\sigma^* : \bar{M} \prec L^A_\chi$ be such that $\sigma^* ( \langle \bar{\mbb{P}} , \bar{b} \rangle ) = \langle \mbb{P} , b \rangle$
and $\sigma^* [ \bar{G} ] \subseteq G$. Then, in $V[G]$, $\sigma^*$ can be extended to
$\sigma^{**} : \bar{M}[ \bar{G} ] \prec L^A_\chi [G]$ with $\sigma^{**} ( \bar{G} ) = G$
by letting $\sigma^{**} ( \dot{\bar{x}}^{\bar{G}} ) := \sigma^* ( \dot{\bar{x}} )^G$.
\end{aenumerate}

In the rest of this section, we observe basic properties of $\vec{K}$-subcomplete forcings.
First, we prove that the $\vec{K}$-subcompleteness is preserved by forcing equivalence.

\begin{lemma} \label{lem:K_subcomplete_equivalent}
Let $\vec{K}$ be an adequate model sequence.
If $\mbb{P}$ is a $\vec{K}$-subcomplete forcing notion, and $\mbb{P}'$ is a forcing notion
which is forcing equivalent to $\mbb{P}$, then $\mbb{P}'$ is $\vec{K}$-subcomplete.
\end{lemma}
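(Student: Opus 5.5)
The plan is to follow the standard argument that subcompleteness is invariant under forcing equivalence. The given data are transferred along a definable isomorphism between the completions. Let $\mbb{B}$ be the completion of $\mbb{P}$, and fix dense embeddings $e : \mbb{P} \to \mbb{B} \setminus \{0_\mbb{B}\}$ and $e' : \mbb{P}' \to \mbb{B} \setminus \{0_\mbb{B}\}$; after composing with the isomorphism of completions we may assume both land in the same $\mbb{B}$. Suppose $\theta$ and $a$ verify the $\vec{K}$-subcompleteness of $\mbb{P}$. Take a regular $\theta' > \theta$ with $\mbb{B}, \mbb{P}' \in \mcal{H}_{\theta'}$ and $\mcal{H}_\theta \in \mcal{H}_{\theta'}$, and put $a' := \langle a, \mcal{H}_\theta, \mbb{P}, \mbb{P}', \mbb{B}, e, e' \rangle$. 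I claim that $\theta'$ and $a'$ verify the $\vec{K}$-subcompleteness of $\mbb{P}'$.

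Now let $A, \chi, \bar{M}, \bar{\mbb{P}}', \bar{b}', \sigma, b', \bar{G}'$ be as in Definition \ref{def:K_subcomplete}, stated for $\mbb{P}'$ and $\theta', a'$. Since $a' \in \ran(\sigma)$, we can pull back $\bar{\mbb{P}}, \bar{\mbb{B}}, \bar{e}, \bar{e}', \bar{H} := \sigma^{-1}(\mcal{H}_\theta), \bar{a}$. By elementarity, in $\bar{M}$ the maps $\bar{e}, \bar{e}'$ are dense embeddings into $\bar{\mbb{B}}$, the completion of $\bar{\mbb{P}}$, and $\bar{\mbb{B}}$ is complete in $\bar{M}$. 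From $\bar{G}'$ define the $\bar{\mbb{B}}$-generic filter $\bar{G}_\mbb{B}$ generated by $\bar{e}'[\bar{G}']$, and set $\bar{G} := \bar{e}^{-1}[\bar{G}_\mbb{B}]$. Then $\bar{G}$ is $\bar{\mbb{P}}$-generic over $\bar{M}$, and $\bar{G} \in K_{\omega_1^{\bar{M}}}$ because it is definable from $\bar{G}', \bar{M} \in K_{\omega_1^{\bar{M}}}$ and $K_{\omega_1^{\bar M}}\models\mrm{ZFC}^-$. Set $\bar{b} := \langle \bar{b}', \bar{\mbb{P}}', \bar{\mbb{B}}, \bar{e}, \bar{e}' \rangle$ and $b := \sigma(\bar{b})$. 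All hypotheses of Definition \ref{def:K_subcomplete} for $\mbb{P}$ then hold: $\mcal{H}_\theta \subseteq L^A_\chi$ (since $\mcal{H}_\theta \subseteq \mcal{H}_{\theta'}$), $a \in \ran(\sigma)$, and $\sigma(\langle \bar{\mbb{P}}, \bar{b} \rangle) = \langle \mbb{P}, b \rangle$. So there is $p^* \in \mbb{P}$ forcing the existence of $\sigma^* : \bar{M} \prec L^A_\chi$ with $\sigma^*(\langle \bar{\mbb{P}}, \bar{b} \rangle) = \langle \mbb{P}, b \rangle$ and $\sigma^*[\bar{G}] \subseteq \dot{G}$.

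Next pick $p'^* \in \mbb{P}'$ with $e'(p'^*) \leq e(p^*)$, which exists by density of $e'$. Let $G'$ be $\mbb{P}'$-generic with $p'^* \in G'$. Then $G'$ generates a $\mbb{B}$-generic $G_\mbb{B}$, and $G := e^{-1}[G_\mbb{B}]$ is $\mbb{P}$-generic with $p^* \in G$, and $V[G] = V[G']$. So in $V[G']$ there is $\sigma^*$ as above. Because $\sigma^*(\bar{b}) = b$, we get $\sigma^*(\bar{b}') = b'$, $\sigma^*(\bar{\mbb{P}}') = \mbb{P}'$, and $\sigma^*$ maps $\bar{\mbb{B}}, \bar{e}, \bar{e}'$ to $\mbb{B}, e, e'$.

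It remains to check $\sigma^*[\bar{G}'] \subseteq G'$, and this is the main obstacle, since $\sigma^*$ only controls $\bar{G}$. Take $\bar{q} \in \bar{G}'$. Genericity of $\bar{G}$ and density of $\bar{e}$ give $\bar{p} \in \bar{G}$ with $\bar{e}(\bar{p}) \leq \bar{e}'(\bar{q})$ in $\bar{\mbb{B}}$; here we use that $\bar{G}_\mbb{B}$ is a filter containing $\bar{e}'(\bar{q})$, together with density below $\bar{e}'(\bar{q})$. Applying $\sigma^*$ yields $e(\sigma^*(\bar{p})) \leq e'(\sigma^*(\bar{q}))$. Since $\sigma^*(\bar{p}) \in G$, we have $e(\sigma^*(\bar p)) \in G_\mbb{B}$, hence $e'(\sigma^*(\bar{q})) \in G_\mbb{B}$. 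Finally, $G' = e'^{-1}[G_\mbb{B}]$, which follows from separativity of $\mbb{P}'$ and genericity. So $\sigma^*(\bar{q}) \in G'$. Hence $p'^*$ witnesses the required condition, and $\mbb{P}'$ is $\vec{K}$-subcomplete.
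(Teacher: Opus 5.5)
Your proof is correct and is essentially the paper's argument. You put $\mathbb{P}'$ and the embeddings into the parameter $a$ and into the tuple that $\sigma^*$ must respect. You then transfer the generic filter along the reflected embeddings, staying inside $K_{\omega_1^{\bar{M}}}$ because it is a model of $\mathrm{ZFC}^-$, apply the hypothesis, and transfer back in the extension.

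The only difference is packaging. The paper reduces to a single dense embedding $d:\mathbb{P}\to\mathbb{P}'$ and writes out only the easy direction, $\sigma^*[\bar d^{-1}[\bar G']]\subseteq d^{-1}[G']=G$. You instead pass through the common completion $\mathbb{B}$ in one step, so your final paragraph explicitly supplies the genericity/density argument that the paper's omitted ``similar'' direction requires.
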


\begin{proof}
It suffices to show that if there is a dense embedding between forcing notions $\mbb{P}$ and $\mbb{P}'$,
then $\mbb{P}$ is $\vec{K}$-subcomplete exactly when $\mbb{P}'$ is $\vec{K}$-subcomplete.
Suppose $\mbb{P}$ and $\mbb{P}'$ are forcing notions, and there is a dense embedding $d : \mbb{P} \to \mbb{P}'$.
We only show that if $\mbb{P}'$ is $\vec{K}$-subcomplete, then so is $\mbb{P}$.
The proof of the other direction is similar and is omitted.
Let $\vec{K} = \langle K_\xi \mid \xi < \omega_1 \rangle$.

Take $\theta$ and $a'$ which verify $\vec{K}$-subcompleteness of $\mbb{P}'$.
We may assume $a := \{ a' , \mbb{P}' , d \} \in \mcal{H}_\theta$.
We show that $\theta$ and $a$ verify the $\vec{K}$-subcompleteness of $\mbb{P}$.
Suppose $A$, $\chi$, $\bar{M}$, $\bar{\mbb{P}}$, $\bar{b}$, $\sigma$, $b$ and $\bar{G}$ satisfy
(i)--(iv) of Definition \ref{def:K_subcomplete}. We find $p^* \in \mbb{P}$ as in Definition \ref{def:K_subcomplete}.
Let $M := L^A_\chi$.

Note that $\mbb{P}' , d \in \ran ( \sigma )$ since $a \in \ran ( \sigma )$.
Let $\langle \bar{\mbb{P}}' , \bar{d} \rangle := \sigma^{-1} ( \langle \mbb{P} ' , d \rangle )$.
Then $\bar{d} : \bar{\mbb{P}} \to \bar{\mbb{P}} '$ is a dense embedding in $\bar{M}$.
Let $\bar{G}'$ be the filter on $\bar{\mbb{P}}'$ generated by $\bar{d} [ \bar{G} ]$.
Then $\bar{G}'$ is a $\bar{\mbb{P}}'$-generic filter over $\bar{M}$ with $\bar{G}' \in K_{\omega_1^{\bar{M}}}$.
Since $\theta$ and $a'$ verify the $\vec{K}$-subcompleteness of $\mbb{P}'$,
we can take $p' \in \mbb{P}'$ forcing the existence of $\sigma ' : \bar{M} \to M$ with
$\sigma ' ( \langle \bar{\mbb{P}} , \bar{b} , \bar{\mbb{P}}' , \bar{d} \rangle )
= \langle \mbb{P} , b , \mbb{P}' , d \rangle$
and $\sigma ' [ \bar{G}' ] \subseteq \dot{G}'$, where $\dot{G}'$ is the canonical name
for a $\mbb{P}'$-generic filter.
Take $p^* \in \mbb{P}$ with $d( p^* ) \leq p'$.

We show that $p^*$ is as desired.
Suppose $G$ is a $\mbb{P}$-generic filter over $V$ with $p^* \in G$.
Working in $V[G]$, we show that there is $\sigma^* : \bar{M} \prec M$
with $\sigma^* ( \langle \bar{\mbb{P}} , \bar{b} \rangle ) = \langle \mbb{P} , b \rangle$
and $\sigma^* [ \bar{G} ] \subseteq G$.
Let $G'$ be the filter on $\mbb{P}'$ generated by $d[G]$.
Then $G'$ is a $\mbb{P}'$-generic filter over $V$ with $p' \in G'$.
By the choice of $p'$, we can take $\sigma^* : \bar{M} \prec M$ with
$\sigma^* ( \langle \bar{\mbb{P}} , \bar{b} , \bar{\mbb{P}}' , \bar{d} \rangle )
= \langle \mbb{P} , b , \mbb{P}' , d \rangle$
and $\sigma^* [ \bar{G}' ] \subseteq \dot{G}'$.
Then,
\[
\sigma^* [ \bar{G} ] = \sigma^* [ \bar{d}^{-1} [ \bar{G}' ] ] \subseteq d^{-1} [G'] = G \, .
\]
So $\sigma^*$ is as desired.
\end{proof}

Next, recall that $\infty$-subcomplete forcings add no reals.
We observe that this can be generalized to $\vec{K}$-subcomplete forcings.

\begin{lemma} \label{lem:K_subcomplete_reals}
Suppose $\vec{K}$ is an adequate model sequence.
Then any $\vec{K}$-subcomplete forcing adds no reals.
In particular, every $\vec{K}$-subcomplete forcing preserves $\omega_1$.
\end{lemma}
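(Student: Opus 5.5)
Suppose $\mbb{P}$ is $\vec{K}$-subcomplete, as witnessed by $\theta$ and $a$, and suppose toward a contradiction that some $p \in \mbb{P}$ forces that $\dot{r}$ is a new subset of $\omega$, i.e.\ $p \Vdash \dot{r} \notin V$. The plan is to capture everything relevant in a countable $\vec{K}$-good model, pick a generic there below the preimage of $p$, and use the condition $p^*$ from Definition \ref{def:K_subcomplete} to conclude that $\dot{r}$ is evaluated as a ground-model real, contradicting $p \Vdash \dot{r} \notin V$.

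First, fix a regular $\chi > \theta$ with $\mcal{H}_\theta \in \mcal{H}_\chi$, and let $A$ be a well-order of $\mcal{H}_\chi$ (or any $A$ with $L_\chi[A] = \mcal{H}_\chi$). Then $M := L^A_\chi$ satisfies (i) of Definition \ref{def:K_subcomplete} and $\mcal{H}_{\omega_1} \in M$. By Lemma \ref{lem:adequate_capture}, applied with the parameter $\langle a, \mbb{P}, p, \dot{r} \rangle$, we obtain a $\vec{K}$-good model $\bar{M}$ and $\sigma : \bar{M} \prec M$ with this parameter in $\ran(\sigma)$. Let $\sigma(\langle \bar{\mbb{P}}, \bar{p}, \bar{r}\rangle) = \langle \mbb{P}, p, \dot{r}\rangle$, and set $\bar{b} := \langle \bar{p}, \bar{r}\rangle$, $b := \langle p, \dot{r}\rangle$. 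Since $\bar{M}$ is countable in $K_\xi$, where $\xi = \omega_1^{\bar{M}}$, and $K_\xi \models \mrm{ZFC}^-$, we can build inside $K_\xi$ a $\bar{\mbb{P}}$-generic $\bar{G}$ over $\bar{M}$ with $\bar{p} \in \bar{G}$, by enumerating the dense sets of $\bar{M}$ in order type $\omega$ in $K_\xi$. This gives (iv), so we obtain $p^*$ forcing the existence of $\sigma^*$ with $\sigma^*(\bar{b}) = b$ and $\sigma^*[\bar{G}] \subseteq \dot{G}$.

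Next, let $G \ni p^*$ be generic and take $\sigma^*$ in $V[G]$. Since $\sigma^*(\bar{p}) = p$ and $\bar{p} \in \bar{G}$, we get $p \in G$. Now extend $\sigma^*$ to $\sigma^{**} : \bar{M}[\bar{G}] \prec M[G]$ as in remark (4). Since $\sigma^{**}(n) = n$ for each $n < \omega$ and $\sigma^{**}(\bar{r}^{\bar{G}}) = \dot{r}^G$, we have $n \in \bar{r}^{\bar{G}} \iff n \in \dot{r}^G$, so $\dot{r}^G = \bar{r}^{\bar{G}}$. The set $\bar{r}^{\bar{G}}$ is defined from $\bar{r}$ and $\bar{G}$, both of which lie in $V$, so $\dot{r}^G \in V$. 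This contradicts $p \in G$ and $p \Vdash \dot{r} \notin V$. The clause about $\omega_1$ then follows: a collapse of $\omega_1$ would add a real coding a well-order of type $\omega_1^V$, and no such real exists.

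The only delicate point is the careful bookkeeping that justifies applying the definition: checking that $\bar{M}$ is full, which is part of $\vec{K}$-goodness and comes from Lemma \ref{lem:adequate_capture}; that $\bar{G}$ can be taken in $K_{\omega_1^{\bar{M}}}$ and below $\bar{p}$; and that a $\chi$ with $\mcal{H}_\theta \subseteq L^A_\chi$ and $\chi$ regular is available so that Lemma \ref{lem:adequate_capture} applies. None of these steps requires new ideas.
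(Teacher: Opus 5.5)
Your proposal is correct and is essentially the paper's proof. You capture $a$, $\mathbb{P}$, $p$ and the name in a $\vec{K}$-good $\bar{M}$ via Lemma \ref{lem:adequate_capture}, pick $\bar{G} \ni \bar{p}$ inside $K_{\omega_1^{\bar{M}}}$, and use $\sigma^{**}$ to get $\dot{r}^G = \bar{r}^{\bar{G}} \in V$; the only difference is that you argue by contradiction, whereas the paper shows directly that $p^* \leq p$ forces the real into $V$.

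One small fix to the setup is needed. A well-order of $\mathcal{H}_\chi$ used as the predicate need not put $\mathcal{H}_\theta$ inside $L_\chi[A]$, and $L_\chi[A] = \mathcal{H}_\chi$ requires $2^{<\chi} = \chi$. Instead, as the paper does, enlarge $\theta$ so that $\omega_1 < \theta$ and $\dot{r} \in \mathcal{H}_\theta$, and take any $A$ (e.g.\ a subset of an ordinal coding $\mathcal{H}_\theta$) and any regular $\chi$ with $\mathcal{H}_\theta \subseteq L^A_\chi$.
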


\begin{proof}
Suppose $\mbb{P}$ is a $\vec{K}$-subcomplete forcing notion,
$p \in \mbb{P}$ and $\dot{x}$ is a $\mbb{P}$-name for a real.
It suffices to find $p^* \leq p$ forcing that $\dot{x} \in V$.

Take a regular cardinal $\theta$ and $a \in \mcal{H}_\theta$
verifying the $\vec{K}$-subcompleteness
of $\mbb{P}$. We may assume that $\omega_1 < \theta$ and $\dot{x} \in \mcal{H}_\theta$.
Take a set $A$ and a regular cardinal $\chi$ such that
$\mcal{H}_\theta \subseteq L^A_\chi$. Let $M := L^A_\chi$.

By Lemma \ref{lem:adequate_capture}, we can take a $\vec{K}$-good model $\bar{M}$
and $\sigma : \bar{M} \prec M$ such that
$a , \mbb{P} , p , \dot{x} \in \ran ( \sigma )$. Let
$\langle \bar{\mbb{P}} , \bar{p} , \dot{\bar{x}} \rangle
:= \sigma^{-1} ( \langle \mbb{P} , p , \dot{x} \rangle )$.
Since $\bar{M}$ is countable in $K_{\omega_1^{\bar{M}}}$,
we can take a $\bar{\mbb{P}}$-generic filter $\bar{G}$ over $\bar{M}$ with
$\bar{p} \in \bar{G} \in K_{\omega_1^{\bar{M}}}$. Let $x := \dot{\bar{x}}^{\bar{G}}$.

Since $\theta$ and $a$ verify the $\vec{K}$-subcompleteness of $\mbb{P}$,
we can take $p^* \in \mbb{P}$ which forces the existence of $\sigma^* : \bar{M} \prec M$
with $\sigma^* ( \langle \mbb{P} , \bar{p} , \dot{\bar{x}} \rangle ) = \langle \mbb{P} , p , \dot{x} \rangle$
and $\sigma^* [ \bar{G} ] \subseteq \dot{G}$, where $\dot{G}$ is the canonical name
for a $\mbb{P}$-generic filter.
We claim that $p^*$ is as desired.

Note that $p^* \Vdash \lchon\, p = \sigma^* ( \bar{p} ) \in \sigma^* [ \bar{G} ] \subseteq \dot{G} \,\rchon$.
So $p^* \Vdash \lchon\, p \in \dot{G} \,\rchon$. Thus $p^* \leq p$.

We prove that $p^* \Vdash \lchon\, \dot{x} \in V \,\rchon$.
Suppose $G$ is a $\mbb{P}$-generic filter over $V$ with $p^* \in G$. We show that $\dot{x}^G \in V$.
In $V[G]$, take $\sigma^* : \bar{M} \prec M$ with
$\sigma^* ( \langle \mbb{P} , \bar{p} , \dot{\bar{x}} \rangle ) = \langle \mbb{P} , p , \dot{x} \rangle$
and $\sigma^* [ \bar{G} ] \subseteq G$.
Then, $\sigma^*$ can be extended to $\sigma^{**} : \bar{M}[ \bar{G} ] \prec M[G]$
with $\sigma^{**} ( \bar{G} ) = G$.
Then, $\dot{x}^G = \sigma^{**} ( \dot{\bar{x}}^{\bar{G}} ) = \sigma^{**} (x)$.
But $\sigma^{**} (x) = x$ since $x$ is a real. So, $\dot{x}^G = x \in V$.
\end{proof}

As we have mentioned several times, Jensen \cite{Jensen3} proved that
subcomplete forcings preserve $\diamondsuit_{\omega_1}$.
Essentially the same argument proves the following lemma.

\begin{lemma} \label{lem:K_subcomplete_adequate}
If $\vec{K}$ is an adequate model sequence,
then $\vec K$ remains an adequate model sequence in any $\vec K$-subcomplete forcing extension.
If $\vec{K}$ is a $\dmnd$-model sequence,
then $\vec{K}$ remains a $\dmnd$-model sequence in any $\vec{K}$-subcomplete forcing extensions.
\end{lemma}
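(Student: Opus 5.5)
The plan is to check conditions (i)--(iii) of Definition \ref{def:model_seq} in the extension $V[G]$, where $\mbb{P}$ is $\vec{K}$-subcomplete and $G$ is $\mbb{P}$-generic. Conditions (i) and (iii) concern only the ground-model objects $K_\xi$ and $\xi$. Each $K_\xi$ is still a transitive $\mrm{ZFC}^-$-model containing $\xi$, with $\xi$ countable in it, and countability of $K_\xi$ is upward absolute. Since $\omega_1$ is preserved by Lemma \ref{lem:K_subcomplete_reals}, the index set is unchanged. So the whole work is condition (ii).

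Fix $p\in\mbb{P}$, a name $\dot{B}$ for a subset of $\omega_1$, and a name $\dot{C}$ for a club in $\omega_1$. We want $q\le p$ and $\xi<\omega_1$ such that $q\Vdash \xi\in\dot{C}\wedge \dot{B}\cap\xi\in K_\xi$.

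Take $\theta,a$ verifying $\vec{K}$-subcompleteness, with $\theta>\omega_1$ large enough that $\dot{B},\dot{C}\in\mcal{H}_\theta$. Take $A$ and a regular $\chi$ with $\mcal{H}_\theta\subseteq L^A_\chi=:M$. By Lemma \ref{lem:adequate_capture}, obtain a $\vec{K}$-good $\bar{M}$ and $\sigma:\bar{M}\prec M$ with $a,\mbb{P},p,\dot{B},\dot{C}\in\ran(\sigma)$. Let $\bar{\mbb{P}},\bar{p},\dot{\bar{B}},\dot{\bar{C}}$ be their preimages, and put $\xi:=\omega_1^{\bar{M}}$. Note that $\sigma\rst\xi=\mrm{id}$ and $\sigma(\xi)=\omega_1$.

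Since $\bar{M}$ is countable in $K_\xi$, build inside $K_\xi$ a $\bar{\mbb{P}}$-generic filter $\bar{G}\ni\bar{p}$ over $\bar{M}$. Then subcompleteness gives $p^*$ forcing the existence of $\sigma^*:\bar{M}\prec M$ with $\sigma^*(\langle\bar{\mbb{P}},\bar{p},\dot{\bar{B}},\dot{\bar{C}}\rangle)=\langle\mbb{P},p,\dot{B},\dot{C}\rangle$ and $\sigma^*[\bar{G}]\subseteq\dot{G}$. As in the proof of Lemma \ref{lem:K_subcomplete_reals}, we get $p^*\le p$.

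Now work in $V[G]$ with $p^*\in G$. Extend $\sigma^*$ to $\sigma^{**}:\bar{M}[\bar{G}]\prec M[G]$. Since $\bar{M}$ is transitive with $\omega_1^{\bar{M}}=\xi$, the map $\sigma^{**}$ is the identity on $\xi$ and sends $\xi$ to $\omega_1$. Let $\bar{B}:=\dot{\bar{B}}^{\bar{G}}$ and $\bar{C}:=\dot{\bar{C}}^{\bar{G}}$. Then $\sigma^{**}(\bar{B})=\dot{B}^G$, so $\dot{B}^G\cap\xi=\bar{B}$. Likewise $\dot{C}^G\cap\xi=\bar{C}$, and $\bar{C}$ is unbounded in $\xi$ because $\bar{M}[\bar{G}]$ thinks it is club in its $\omega_1$. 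Hence $\xi\in\dot{C}^G$ by closure.

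Finally, $\bar{B}=\dot{\bar{B}}^{\bar{G}}$ is computed from $\dot{\bar{B}}\in\bar{M}\in K_\xi$ and $\bar{G}\in K_\xi$. Since $K_\xi\models\mrm{ZFC}^-$ is transitive, $\bar{B}\in K_\xi$. So $p^*$ forces $\xi\in\dot{C}\wedge\dot{B}\cap\xi\in K_\xi$, which proves (ii).

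The main point needing care is that $\bar{G}$ be chosen inside $K_\xi$. This is exactly what clause (iv) of Definition \ref{def:K_subcomplete} permits, and it is what makes the evaluation $\bar{B}$ land in $K_\xi$. The remaining steps (full models, elementarity on $\xi$) are routine.
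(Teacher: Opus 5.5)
Your proposal is correct and follows essentially the same route as the paper's proof: capture $\bar{M}$ via Lemma \ref{lem:adequate_capture}, choose $\bar{G}\ni\bar{p}$ inside $K_\xi$, lift $\sigma^*$ to $\sigma^{**}$, and read off $\dot{B}^G\cap\xi=\bar{B}\in K_\xi$ and $\xi\in\dot{C}^G$. Your explicit check of clauses (i) and (iii) of Definition \ref{def:model_seq} (using preservation of $\omega_1$ and upward absoluteness of countability) simply fills in what the paper leaves implicit when it says the second statement follows from the first.
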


\begin{proof}
The latter statement follows from the former. We prove the former.

Let $\vec{K} = \langle K_\xi \mid \xi < \omega_1 \rangle$ be an adequate model sequence
and $\mbb{P}$ be a $\vec{K}$-subcomplete forcing notion.
Suppose $p \in \mbb{P}$, $\dot{B}$ and $\dot{C}$ are $\mbb{P}$-names,
and $p$ forces that $\dot{B} \subseteq \omega_1$ and $\dot{C}$ is club in $\omega_1$.
It suffices to find $p^* \leq p$ and $\xi < \omega_1$
such that $p^*$ forces $\dot{B} \cap \xi \in K_\xi$ and $\xi \in \dot{C}$.

Take a regular cardinal $\theta$ and $a \in \mcal{H}_\theta$ verifying
the $\vec{K}$-subcompleteness of $\mbb{P}$.
We may assume $\dot{B} , \dot{C} \in \mcal{H}_\theta$.
Take a set $A$ and a regular cardinal $\chi$
with $M := L^A_\chi \supseteq \mcal{H}_\theta$.

By Lemma \ref{lem:adequate_capture}, we can take a $\vec{K}$-good model $\bar{M}$
and $\sigma : \bar{M} \prec M$ such that $a , \mbb{P} , p , \dot{B} , \dot{C} \in \ran ( \sigma )$.
Let $\xi := \omega_1^{\bar{M}}$, and let
$\langle \bar{\mbb{P}} , \bar{p} , \dot{\bar{B}} , \dot{\bar{C}} \rangle
:= \sigma^{-1} ( \langle \mbb{P} , p , \dot{B} , \dot{C} \rangle )$.
Take a $\bar{\mbb{P}}$-generic filter $\bar{G}$ over $\bar{M}$ with $\bar{p} \in \bar{G} \in K_\xi$.
Then there is $p^* \in \mbb{P}$ forcing the existence of $\sigma^* : \bar{M} \prec M$ with
$\sigma^* ( \langle \bar{\mbb{P}} , \bar{p} , \dot{\bar{B}} , \dot{\bar{C}} \rangle )
= \langle \mbb{P} , p , \dot{B} , \dot{C} \rangle$
and $\sigma^* [ \bar{G} ] \subseteq \dot{G}$, where $\dot{G}$ is the canonical name
for a $\mbb{P}$-generic filter.
We claim that $p^*$ and $\xi$ are as desired.
Note that $p^* \leq p$ by the same argument as in the proof of Lemma \ref{lem:K_subcomplete_reals}.

Suppose $G$ is a $\mbb{P}$-generic filter over $V$ with $p^* \in G$.
Let $B := \dot{B}^G$ and $C := \dot{C}^G$.
In $V[G]$, we show that $B \cap \xi \in K_\xi$ and $\xi \in C$.
Take $\sigma^{**} : \bar{M}[ \bar{G} ] \prec M[G]$ with
$\sigma^{**} ( \langle \bar{\mbb{P}} , \bar{p} , \dot{\bar{B}} , \dot{\bar{C}} \rangle )
= \langle \mbb{P} , p , \dot{B} , \dot{C} \rangle$
and $\sigma^{**} ( \bar{G} ) = G$.
Let $\bar{B} := \dot{\bar{B}}^{\bar{G}}$ and $\bar{C} := \dot{\bar{C}}^{\bar{G}}$.
Then, $\sigma^{**} ( \bar{B} ) = B$, $\sigma^{**} ( \bar{C} ) = C$, and $\sigma^{**} ( \xi ) = \omega_1$.
Then, $\bar{B} = B \cap \xi$. But $\bar{B} \in K_\xi$ since $\bar{B} \in \bar{M}[ \bar{G} ] \in K_\xi$.
So $B \cap \xi \in K_\xi$.
Also, $\bar{C} = C \cap \xi$. Moreover $\bar{C}$ is unbounded in $\xi$ by the elementarity of $\sigma^*$.
Then $\xi \in C$ since $C$ is closed.
\end{proof}

Recall that all $\infty$-subcomplete forcings preserve stationary subsets of $\omega_1$.
We prove that $\vec{K}$-subcomplete forcings preserve stationary subsets of $\omega_1$
if $\vec{K}$ is strongly adequate in the sense below.

\begin{definition} \label{def:strongly_adequate}
An adequate model sequence $\vec{K} = \langle K_\xi \mid \xi < \omega_1 \rangle$ is called
a \emph{strongly adequate model sequence} if
for any $B \subseteq \omega_1$, there are club many $\xi < \omega_1$
with $B \cap \xi \in K_\xi$.
\end{definition}

\begin{lemma} \label{lem:K_subcomplete_stat_pres}
Suppose $\vec{K}$ is a strongly adequate model sequence,
and $\mbb{P}$ is a $\vec{K}$-subcomplete forcing notion.
Then $\mbb{P}$ preserves stationary subsets of $\omega_1$.
\end{lemma}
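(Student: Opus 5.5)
The plan is to follow the proof of Lemma \ref{lem:K_subcomplete_adequate}, adding one new ingredient: the models used must also capture a given stationary set.

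Fix a stationary $S \subseteq \omega_1$, a condition $p \in \mbb{P}$, and a $\mbb{P}$-name $\dot{C}$ such that $p$ forces $\dot{C}$ to be club in $\omega_1$. It suffices to find $p^* \leq p$ and $\xi \in S$ with $p^* \Vdash \lchon\, \xi \in \dot{C} \,\rchon$. Choose $\theta$ and $a$ verifying the $\vec{K}$-subcompleteness of $\mbb{P}$ with $\dot{C} \in \mcal{H}_\theta$, and let $M := L^A_\chi \supseteq \mcal{H}_\theta$ with $\chi$ regular.

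The key step is a strengthened version of Lemma \ref{lem:adequate_capture}: we want a $\vec{K}$-good $\bar{M}$ and $\sigma : \bar{M} \prec M$ with $a, \mbb{P}, p, \dot{C} \in \ran(\sigma)$ and, in addition, $\omega_1^{\bar{M}} \in S$. To get this, rerun the proof of Lemma \ref{lem:adequate_capture}. Code the structure $N''$ by $J = \langle \omega_1, E \rangle$ as there. The set $D$ of $\xi$ with $J \rst \xi \prec J$ and $h[\xi] \cap \omega_1 = \xi$ is club. Strong adequacy, applied to a subset of $\omega_1$ coding $E$, gives a club $D'$ of $\xi$ with $J \rst \xi \in K_\xi$. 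Hence $S \cap D \cap D' \neq \emptyset$. Pick $\xi > 0$ in this intersection. Then the rest of that proof yields $\bar{N}$, $\tau$ with $\omega_1^{\bar{N}} = \xi$, and so $\omega_1^{\bar{M}} = \xi \in S$. This is exactly where strong adequacy is needed, since mere stationarity of the capturing set might miss $S$. I expect this to be the main point, though it is routine once noticed.

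Now set $\xi := \omega_1^{\bar{M}}$ and $\langle \bar{\mbb{P}}, \bar{p}, \dot{\bar{C}} \rangle := \sigma^{-1}(\langle \mbb{P}, p, \dot{C} \rangle)$. Since $\bar{M}$ is countable in $K_\xi$, take a $\bar{\mbb{P}}$-generic $\bar{G} \in K_\xi$ over $\bar{M}$ with $\bar{p} \in \bar{G}$. Apply $\vec{K}$-subcompleteness to get $p^*$ forcing the existence of $\sigma^*$ that moves $\langle \bar{\mbb{P}}, \bar{p}, \dot{\bar{C}} \rangle$ correctly and satisfies $\sigma^*[\bar{G}] \subseteq \dot{G}$. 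As in Lemma \ref{lem:K_subcomplete_reals}, $p^* \leq p$.

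In $V[G]$ with $p^* \in G$, extend $\sigma^*$ to $\sigma^{**} : \bar{M}[\bar{G}] \prec M[G]$. Then $\sigma^{**}(\xi) = \omega_1$ and $\sigma^{**}(\dot{\bar{C}}^{\bar{G}}) = C$, so $\dot{\bar{C}}^{\bar{G}} = C \cap \xi$ is unbounded in $\xi$. By closure, $\xi \in C$. Since $\xi \in S$, the set $S$ meets $C$, and therefore $S$ remains stationary.
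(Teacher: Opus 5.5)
Your proposal is correct and follows essentially the same route as the paper. The paper also isolates the strengthened capturing lemma (Lemma \ref{lem:strongly_adequate_capture}) by rerunning the proof of Lemma \ref{lem:adequate_capture} and using strong adequacy to choose $\xi \in S$. It then argues exactly as in Lemmas \ref{lem:K_subcomplete_reals} and \ref{lem:K_subcomplete_adequate} to obtain $p^* \leq p$ with $p^* \Vdash \xi \in \dot{C}$.
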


To prove this, we use the following modification of Lemma \ref{lem:adequate_capture}.

\begin{lemma} \label{lem:strongly_adequate_capture}
Let $\vec{K} = \langle K_\xi \mid \xi < \omega_1 \rangle$ be a strongly adequate model sequence.
Suppose $M = L^A_\chi$ for some set $A$ and some regular cardinal $\chi$,
$\mcal{H}_{\omega_1} \in M$, and $a \in M$.
Let $S$ be a stationary subset of $\omega_1$.
Then, there are a $\vec{K}$-good model $\bar{M}$ with $\omega_1^{\bar{M}} \in S$
and $\sigma : \bar{M} \prec M$ with $a \in \ran ( \sigma )$.
\end{lemma}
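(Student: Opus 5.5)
The plan is to repeat the proof of Lemma \ref{lem:adequate_capture} verbatim, changing only the step where $\xi$ is chosen. That step uses adequateness to get stationarily many $\xi$ with $J \rst \xi \in K_\xi$. Here strong adequateness gives club many such $\xi$, and intersecting with $S$ leaves a stationary set of admissible $\xi$ inside $S$.

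First, as before, we may assume $A \subseteq L_\chi[A]$. We take a regular $\lambda > \chi$ and set $N := \langle L_\lambda[A], \in \rangle$. Exactly as in the proof of Lemma \ref{lem:adequate_capture}, $M \in N$ and $M$ is regular in $N$. It then suffices to find a countable transitive $\bar{N} \models \mrm{ZFC}^-$ with $\bar{N} \in K_{\omega_1^{\bar N}}$, $\bar N$ countable there, $\omega_1^{\bar N} \in S$, and $\tau : \bar N \prec N$ with $a, M \in \ran(\tau)$. Setting $\bar M := \tau^{-1}(M)$ and $\sigma := \tau \rst \bar M$ then works as before. Fullness comes from regularity in $\bar N$, and $\omega_1^{\bar M} = \omega_1^{\bar N}$ because $\mcal{H}_{\omega_1} \in M$.

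To build $\bar N$, I take $N'' \prec N$ of size $\omega_1$ with $\omega_1 \cup \{a, M\} \subseteq N''$. I fix a bijection $h : \omega_1 \to N''$ with $h(0) = \{a,M\}$ and code $N''$ as $J = \langle \omega_1, E\rangle$. Via a pairing function, $E$ is coded by a subset $B \subseteq \omega_1$, and for club many $\xi$, $J\rst\xi$ is computable in $K_\xi$ from $B \cap \xi$. By strong adequateness, the set of $\xi$ with $B \cap \xi \in K_\xi$, and hence $J \rst \xi \in K_\xi$, contains a club $C_0$. The set $C_1$ of $\xi$ with $J\rst\xi \prec J$ and $h[\xi]\cap\omega_1 = \xi$ is also club. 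Since $S$ is stationary, I can pick $\xi \in S \cap C_0 \cap C_1$ with $\xi > 0$.

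Letting $\rho : J\rst\xi \to \bar N$ be the transitive collapse and $\tau := h\circ\rho^{-1}$, the same computation as before gives $\bar N \in K_\xi$, $\bar N$ countable in $K_\xi$, $\tau : \bar N \prec N$ with $a, M \in \ran(\tau)$, and $\omega_1^{\bar N} = h[\xi]\cap\omega_1 = \xi \in S$. There is no real obstacle. The only point to check is that coding $J$ by a subset of $\omega_1$ is harmless, which is exactly how Lemma \ref{lem:adequate_capture} tacitly applied adequateness.
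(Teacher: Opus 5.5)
Your proposal is correct and is essentially the paper's own proof: rerun Lemma~\ref{lem:adequate_capture}, using strong adequateness to get club many $\xi$ with $J \rst \xi \in K_\xi$, then use stationarity of $S$ to pick such a $\xi \in S$ that also satisfies $J \rst \xi \prec J$ and $h[\xi] \cap \omega_1 = \xi$. Making the coding of $J$ by a subset of $\omega_1$ explicit is a good addition; just take the pairing function to be an absolute one such as G\"odel pairing, so that at its closure points $J \rst \xi$ is recoverable inside $K_\xi$ from $B \cap \xi$.
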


\begin{proof}
The proof is almost the same as Lemma \ref{lem:adequate_capture}.
In the fifth paragraph of the proof of Lemma \ref{lem:adequate_capture},
we took $\xi < \omega_1$ such that $J \rst \xi \in K_\xi$, $J \rst \xi \prec J$
and $h[ \xi ] \cap \omega_1 = \xi$.
Note that we can take such $\xi \in S$ under the assumption of this lemma
that $\vec{K}$ is strongly adequate and $S$ is stationary.
Then, the rest of the proof is exactly the same as that of Lemma \ref{lem:adequate_capture}.
\end{proof}

\begin{proof}[Proof of Lemma \ref{lem:K_subcomplete_stat_pres}]
Let $S$ be a stationary subset of $\omega_1$.
Suppose $p \in \mbb{P}$, $\dot{C}$ is a $\mbb{P}$-name and $p$ forces $\dot{C}$ to be
a club subset of $\omega_1$.
We find $p^* \leq p$ and $\xi \in S$ such that $p^* \Vdash \lchon\, \xi \in \dot{C} \,\rchon$.

Take a regular cardinal $\theta$ and $a \in \mcal{H}_\theta$ verifying
the $\vec{K}$-subcompleteness of $\mbb{P}$.
We may assume $\dot{C} \in \mcal{H}_\theta$.
Take a set $A$ and a regular cardinal $\chi$
with $M := L^A_\chi \supseteq \mcal{H}_\theta$.

By Lemma \ref{lem:strongly_adequate_capture}, we can take a $\vec{K}$-good model $\bar{M}$
with $\xi := \omega_1^{\bar{M}} \in S$
and $\sigma : \bar{M} \prec M$ with $a , \mbb{P} , p , \dot{C} \in \ran ( \sigma )$.
Let
$\langle \bar{\mbb{P}} , \bar{p} , \dot{\bar{C}} \rangle
:= \sigma^{-1} ( \langle \mbb{P} , p , \dot{C} \rangle )$.
Take a $\bar{\mbb{P}}$-generic filter over $\bar{M}$ with $\bar{p} \in \bar{G} \in K_\xi$.
Then, there is $p^* \in \mbb{P}$ which forces the existence of $\sigma^* : \bar{M} \prec M$ with
$\sigma^* ( \langle \bar{\mbb{P}} , \bar{p} , \dot{\bar{C}} \rangle )
= \langle \mbb{P} , p ,\dot{C} \rangle$
and $\sigma^* [ \bar{G} ] \subseteq \dot{G}$, where $\dot{G}$ is the canonical name
for a $\mbb{P}$-generic filter.

Note that $p^* \leq p$ by the same argument as in the proof of Lemma \ref{lem:K_subcomplete_reals}.
Recall also that $\xi \in S$.
Moreover, we can prove that $p^* \Vdash \lchon\, \xi \in \dot{C} \,\rchon$
by the same argument as in the proof of Lemma \ref{lem:K_subcomplete_adequate}.
So $p^*$ and $\xi$ are as desired.
\end{proof}

We have proved that $\vec{K}$-subcomplete forcings preserve stationary subsets of $\omega_1$
if $\vec{K}$ is strongly adequate.
However, this need not hold for an arbitrary adequate $\vec K$.
In fact, a $\vec{K}$-subcomplete forcing $\mbb{C}_{\vec{K} , B}$ in \S \ref{sec:K_scfa}
does not preserve stationary subsets of $\omega_1$
if $\vec{K}$ is not strongly adequate, and $B$ is its witness.
See a remark after the proof of Proposition \ref{prop:K_scfa_dmnd+}.


\section{Nice iterations of $\vec{K}$-subcomplete forcings} \label{sec:iteration}

In this section, we discuss nice iterations of $\vec{K}$-subcomplete forcings.
Here an iteration of $\vec{K}$-subcomplete forcings means the following.

\begin{definition} \label{def:it_K_subcomplete}
Let $\vec{K}$ be an adequate model sequence.
An iteration $\langle \mbb{P}_\alpha \mid \alpha \leq \delta \rangle$
is called an \emph{iteration of} $\vec{K}$-\emph{subcomplete forcings} if
\[
\Vdash_\alpha \lchon\,
\mbox{$\vec{K}$ is an adequate model sequence, and $\dot{\mbb{P}}_{\alpha , \alpha +1}$ is $\vec{K}$-subcomplete}
\,\rchon
\]
for all $\alpha < \delta$.
\end{definition}

We prove the following.

\begin{theorem} \label{thm:K_subcomplete_nice_it}
Suppose $\vec{K}$ is an adequate model sequence, and
$\langle \mbb{P}_\alpha \mid \alpha \leq \delta \rangle$ is a nice iteration of $\vec{K}$-subcomplete forcings.
Then $\mbb{P}_\delta$ is $\vec{K}$-subcomplete.
\end{theorem}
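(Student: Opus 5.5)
We plan to follow the Fuchs--Switzer proof of Fact \ref{fact:infty_subcomplete_nice_it}, adapting it to $\vec{K}$-good models and to generic filters lying in $K_{\omega_1^{\bar{M}}}$. We argue by induction on $\delta$. The inductive statement is a strengthened \lchon two-step\rchon\ form. Let $\alpha \le \beta \le \delta$. Let $\bar{M}$ be a $\vec{K}$-good model and $\sigma : \bar{M} \prec L^A_\chi$, with $\sigma(\bar\alpha,\bar\beta,\bar{\mbb{P}}_{\bar\beta}) = (\alpha,\beta,\mbb{P}_\beta)$. Let $\bar{G}_{\bar\beta} \in K_{\omega_1^{\bar{M}}}$ be $\bar{\mbb{P}}_{\bar\beta}$-generic over $\bar{M}$. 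Let $p \in \mbb{P}_\alpha$ force that some $\sigma_\alpha : \bar{M} \prec L^A_\chi$ exists which agrees with $\sigma$ on the relevant parameters and satisfies $\sigma_\alpha[\bar{G}_{\bar\beta}\rst\bar\alpha] \subseteq \dot{G}_\alpha$. Then we claim there is $p' \in \mbb{P}_\beta$ with $p' \rst \alpha = p$ that forces the analogous statement at $\beta$. Applying this with $\alpha = 0$ and $\beta = \delta$ gives the theorem. Niceness of the iteration is used, as in Miyamoto's framework, to glue conditions: from a $\mbb{P}_\alpha$-name for a condition in the tail, together with $p$, we obtain $p' \in \mbb{P}_\beta$ with $p'\rst\alpha = p$.

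The first step is the successor case $\beta = \gamma+1$. Work in $V[G_\gamma]$. There Lemma \ref{lem:K_subcomplete_adequate} and the hypothesis give that $\vec{K}$ is still adequate and that $\dot{\mbb{P}}_{\gamma,\gamma+1}$ is $\vec{K}$-subcomplete. Lift $\sigma_\gamma$ to $\sigma_\gamma^{**} : \bar{M}[\bar{G}\rst\bar\gamma] \prec L^A_\chi[G_\gamma]$, as in remark (4). The key observations are the following. First, $\bar{M}[\bar{G}\rst\bar\gamma]$ is again $\vec{K}$-good in $V[G_\gamma]$: it is full, it has the same $\omega_1$ (no reals are added, by Lemma \ref{lem:K_subcomplete_reals}), and it lies in $K_{\omega_1^{\bar{M}}}$ and is countable there because $\bar{G} \in K_{\omega_1^{\bar{M}}}$. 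Second, the tail filter $\bar{G}\rst[\bar\gamma,\bar\gamma+1)$ is in $K_{\omega_1^{\bar{M}}}$. Third, $L^A_\chi[G_\gamma]$ can be rewritten as some $L^{A'}_{\chi}$ containing $\mcal{H}_\theta^{V[G_\gamma]}$. Applying $\vec{K}$-subcompleteness in $V[G_\gamma]$ then yields a name for a condition in the tail, and gluing it to $p$ gives $p'$. We have to arrange that the $\theta$ and $a$ verifying subcompleteness of the iterands are captured in $\ran(\sigma)$. We do this by choosing $\theta$ large and putting a name for the witnessing sequence into the parameter $a$.

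The second step is the limit case. Fix an $\omega$-sequence $\langle \bar\gamma_n\rangle$ cofinal in $\sup \sigma[\bar\beta]$, or in $\bar\beta$, using the countability of $\bar{M}$ inside $K_{\omega_1^{\bar{M}}}$. Fix also an enumeration of the dense sets of $\bar{M}$, and build conditions $p_n \in \mbb{P}_{\gamma_n}$ coherently by repeated use of the inductive hypothesis. We then take the limit condition, using niceness to make the limit exist. In the limit extension, the embedding is obtained from the $\sigma_{\gamma_n}$. Following Fuchs--Switzer, these are made to agree on an increasing chain of finite pieces of $\bar{M}$ by fixing a term-model enumeration of $\bar{M}$. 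At each stage we choose $\sigma_{n+1}$ among embeddings consistent with the previous finite data, which is a first-order property transferable through the forcing. The limit of these embeddings is elementary and sends $\bar{G}$ into $G_\beta$.

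We expect the limit case to be the main obstacle, exactly as in Fuchs--Switzer, since $\infty$-subcompleteness gives no control linking $\sigma^*$ to $\sigma$. The one new point we must check is that every auxiliary generic we feed into the inductive hypothesis stays inside $K_{\omega_1^{\bar{M}}}$. This holds because all of them are definable from $\bar{G}$ and $\bar{M}$, which both lie in the transitive $\mrm{ZFC}^-$ model $K_{\omega_1^{\bar{M}}}$. We also need the intermediate models $\bar{M}[\bar{G}\rst\bar\gamma]$ to remain countable in $K_{\omega_1^{\bar{M}}}$. Apart from this bookkeeping, the Fuchs--Switzer argument should go through verbatim.
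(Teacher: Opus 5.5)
Your overall architecture matches the paper's, and so does your successor step. The relativized induction is the paper's Definition~\ref{def:relative_verify_K_subcomplete} and Proposition~\ref{prop:K_subcomplete_nice_it}. The successor step is its Case~2: lift the embedding to $\bar M[\bar G_{\bar\gamma}]$, recode $L^A_\chi[G_\gamma]$ as some $L^B_\chi$, check $\vec K$-goodness via preservation of $\omega_1$, and put a name for the tail's witness into $a$. You are also right that the $\vec K$-specific bookkeeping is trivial at limits, since the same $\bar M$ and $\bar G_{\bar\delta}$ are used throughout. The gap is in the limit step. What you sketch there is a linear thread $p_n\in\mbb{P}_{\gamma_n}$ along a fixed cofinal sequence, followed by a limit condition. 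That is the proper/countable-support argument, and it does not go through.

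\textbf{Undecided stages and data.} The embedding at stage $n$ is only forced to exist. So the next stage $\sigma_n(\bar\gamma_{n+1})$ and the values $\sigma_n(\bar c_0),\dots,\sigma_n(\bar c_n)$, on which $\sigma_{n+1}$ must agree, are $\mbb{P}_{\gamma_n}$-names not decided by $p_n$. Placing a cofinal sequence among the preserved parameters fixes the stages only when $\mathrm{cf}(\delta)=\omega$, and it never fixes the data. To use the inductive hypothesis, whose parameters are ground-model objects, you must decide these values below each element of a maximal antichain and apply the hypothesis piece by piece. The construction is therefore a tree: a nested antichain $S$ with $\nart(S)=p$, and $p^*$ is a mixture of $S$, not the limit of a thread.

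\textbf{Full-length conditions at the limit.} Along a branch the stages need not be cofinal in $\delta$, for instance when an uncountable cofinality of $\delta$ survives. Then $\sigma^*(\bar r)\rst\gamma_n\in G_{\gamma_n}$ for all $n$ does not give $\sigma^*(\bar r)\in G_\delta$, because conditions of a nice iteration can be nontrivial cofinally in $\delta$.

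The paper handles both points as follows.
\begin{itemize}
\item Each node $s\in S_n$ carries a full-length $q^{(s,n)}\in\mbb{P}_\delta$, forced by $s$ to equal $\dot\sigma^{(s,n)}(\bar q^{(s,n)})$. Here $\bar q^{(s,n)}\in\bar G_{\bar\delta}$ and $\bar q^{(s,n)}\le\bar r_n$, where $\langle\bar r_n\rangle$ enumerates $\bar G_{\bar\delta}$ itself; enumerating dense sets of $\bar M$ is beside the point.
\item Each $q^{(s,n)}$ is a mixture of a nested antichain $T^{(s,n)}$.
\item The hooking lemma (Lemma~\ref{lem:hook}) is applied inside $\bar M$ and transferred after deciding $\dot\sigma^{(s,n)}$ by some $v\le u$. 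This gives $T^{(s',n+1)}\angle\,T^{(s,n)}$ and fixes the next stage $l(s')$.
\item The fusion lemma (Lemma~\ref{lem:fusion}) then shows that the generic follows a branch with all $q^{(s_n,n)}\in G_\delta$, hence $\sigma^*[\bar G_{\bar\delta}]\subseteq G_\delta$.
\end{itemize}
So niceness is needed for this fusion-with-hooking argument, not merely to make a limit condition exist. That is the missing idea in your proposal.
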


The proof of this theorem is almost the same as the analogous theorem
for $\infty$-subcomplete forcings (Fact \ref{fact:infty_subcomplete_nice_it}).
But, we give the proof for the completeness of this paper.

In \S \ref{subsec:nice_it}, we briefly review nice iterations developed by Miyamoto \cite{Miyamoto_nice}.
In \S \ref{subsec:K_subcomplete_nice_it}, we prove Theorem \ref{thm:K_subcomplete_nice_it}.


\subsection{Nice iterations} \label{subsec:nice_it}

Here we review nice iterations introduced by Miyamoto \cite{Miyamoto_nice}.
The key notions in nice iterations are those of nested antichains and their mixture.

We begin with the notion of nested antichains.
In the definition below, $S$ is essentially a tree of height $\omega$
consisting of conditions in $\bigcup \{ \mbb{P}_\alpha \mid \alpha < \delta \}$:
$S_n$ is the $n$-th level of $S$, and $\suc^n_S (s)$ is the set of immediate successors of $s \in S_n$.

\begin{definition} \label{def:nested_antichain}
Suppose that $\langle \mbb{P}_\alpha \mid \alpha < \delta \rangle$ is an iteration.
A \emph{nested antichain} in $\langle \mbb{P}_\alpha \mid \alpha < \delta \rangle$ is a pair
$S = \langle \langle S_n \mid n < \omega \rangle , \langle \suc^n_S \mid n < \omega \rangle \rangle$
such that
\begin{renumerate}
\item $S_0 = \{ s_0 \}$ for some $s_0 \in \bigcup_{\alpha < \delta} \mbb{P}_\alpha$,
\item $S_n \subseteq \bigcup_{\alpha < \delta} \mbb{P}_\alpha$ for all $n < \omega$,
\item $\suc^n_S : S_n \to \mcal{P} ( S_{n+1} )$ and
$S_{n+1} = \bigcup \{ \suc^n_S (s) \mid s \in S_n \}$ for each $n < \omega$,
\item if $s \in S_n$, and $s' \in \suc^n_S (s)$, then $l(s) \leq l(s')$,
and $s' \rst l(s) \leq s$,
\item if $s \in S_n$, then
$\langle s' \rst l(s) \mid s' \in \suc^n_S (s) \rangle$ is a maximal antichain below $s$ in $\mbb{P}_{l(s)}$.
\end{renumerate}
\end{definition}

For a nested antichain $S$,
we let $\langle S_n \mid n < \omega \rangle$ and $\langle \suc^n_S \mid n < \omega \rangle$ denote
those such that
$S = \langle \langle S_n \mid n < \omega \rangle , \langle \suc^n_S \mid n < \omega \rangle \rangle$.
We write $s \in S$ for $s \in \bigcup_{n < \omega} S_n$.
The scripts $n$ and $S$ in $\suc^n_S$ will sometimes be omitted
if they are clear from the context.

Suppose $S$ is a nested antichain in some iteration.
A unique element of $S_0$ is called a \emph{root} of $S$ and denoted as $\nart (S)$.
Suppose $n \leq n' < \omega$, $s \in S_n$ and $s' \in S_{n'}$.
We write $(s,n) \leq_S (s',n')$ if there is a sequence $\langle s_m \mid n \leq m \leq n' \rangle$
such that $s_n = s$, $s_{n'} = s'$ and $s_{m+1} \in \suc^m_S ( s_m )$ for all $m$.

Next, we recall mixtures of nested antichains.
A nested antichain $S$ can be identified with some condition in an iteration,
and such a condition is called a mixture of $S$.
Here we adopt the following definition of mixtures,
which was proved to be equivalent to the original definition in \cite[Proposition 2.5]{Miyamoto_nice}.
The following formulation may be more intuitive.

\begin{definition} \label{def:mixture}
Let $S$ be a nested antichain in an iteration $\langle \mbb{P}_\alpha \mid \alpha < \delta \rangle$
with $\nart (S) = s_0$.
For $\beta < \delta$ and $p \in \mbb{P}_\beta$,
we say that $p$ is a \emph{mixture} of $S$ up to $\beta$ if the following holds,
where $\dot{G}_\alpha$ is the canonical name for a $\mbb{P}_\alpha$-generic filter.
\begin{renumerate}
\item $p \equiv  s_0 \rst \beta$ if $\beta < l( s_0 )$,
and $p \rst l( s_0 ) \equiv s_0$ if $\beta \geq l( s_0 )$.
\item For any $s \in S$, $s \rst \beta \leq p$ if $\beta < l(s)$,
and $s \leq p \rst l(s)$ if $\beta \geq l(s)$.
\item For any $s \in S$ with $l(s) \leq \beta$ and any $s' \in \suc (s)$,
\begin{itemize}
\item $s' \rst l(s)^\frown p \rst [ l(s) , \beta ) \equiv s' \rst \beta$ if $\beta < l(s')$,
\item $s' \rst l(s)^\frown p \rst [ l(s) , l(s') ) \equiv s'$ if $\beta \geq l(s')$.
\end{itemize}
\item For any $\alpha < \beta$ and any $u \in \mbb{P}_\alpha$ with $u \leq p \rst \alpha$, if
$u$ forces the following ($\ast$), then we have
$u^\frown 1_\beta \rst [ \alpha , \beta ) \equiv u^\frown p \rst [ \alpha , \beta )$.
\begin{itemize}
\item[($\ast$)]
There is a sequence $\langle s_n \mid n < \omega \rangle \in \prod_{n < \omega} S_n$ such that
$l( s_n ) < \alpha$, $s_{n+1} \in \suc ( s_n )$, and $s_n \in \dot{G}_{l( s_n )}$
for all $n < \omega$.
\end{itemize}
\end{renumerate}
For a limit ordinal $\beta \leq \delta$ and a sequence $p$ on $\beta$,
we say that $p$ is $( S , \beta )$-\emph{nice} if
$p \rst \beta ' \in \mbb{P}_{\beta '}$, and $p \rst \beta '$ is a mixture of $S$ up to $\beta '$
for all $\beta ' < \beta$.
\end{definition}

Note that if $p \in \mbb{P}_\beta$ is a mixture of a nested antichain $S$ up to $\beta$,
then $q \in \mbb{P}_\beta$ is a mixture of $S$ up to $\beta$ if and only if $p \equiv q$.
For $p$ being $( S , \beta )$-nice, we do not require $p$ to be in $\mbb{P}_\beta$.

Now, we recall the notion of nice iterations.

\begin{definition} \label{def:nice_it}
An iteration $\langle \mbb{P}_\alpha \mid \alpha < \delta \rangle$
is called a \emph{nice iteration} if it satisfies the following conditions,
where $\dot{G}_\alpha$ is the canonical name for a $\mbb{P}_\alpha$-generic filter.
\begin{renumerate}
\item For any $\alpha < \delta$ with $\alpha + 1 \leq \delta$,
if $p \in \mbb{P}_\alpha$, and $\dot{q}$ is a $\mbb{P}_\alpha$-name such that
$p \Vdash_\alpha \lchon\, \dot{q} \in \dot{\mbb{P}}_{\alpha , \alpha + 1} \,\rchon$,
then there is $r \in \mbb{P}_{\alpha +1}$ such that
$r \rst \alpha \equiv p$ and
$p \Vdash_\alpha \lchon\, r \rst [ \alpha , \alpha + 1 ) \equiv \dot{q} \,\rchon$.
\item For any limit ordinal $\beta < \delta$, $\mbb{P}_\beta$ consists of all sequences $p$ on $\beta$
such that $p$ is $( S , \beta )$-nice for some nested antichain
$S$ in $\langle \mbb{P}_\alpha \mid \alpha < \beta \rangle$.
\end{renumerate}
\end{definition}

\cite[Lemma 2.9]{Miyamoto_nice} proved that
if $\langle \mbb{P}_\alpha \mid \alpha < \beta \rangle$ is an iteration for a limit ordinal $\beta$,
then we can extend it to an iteration $\langle \mbb{P}_\alpha \mid \alpha \leq \beta \rangle$
so that $\mbb{P}_\beta$ satisfies (ii) of the above definition.
So we can recursively construct a nice iteration as usual:
Suppose $Q$ is a class function, and
$Q( \mbb{P} )$ is a $\mbb{P}$-name of a forcing notion for every forcing notion $\mbb{P}$.
Then for any ordinal $\delta$,
we can construct a nice iteration $\langle \mbb{P}_\alpha \mid \alpha < \delta \rangle$ such that
$\Vdash_\alpha \lchon\,
\mbox{$\dot{\mbb{P}}_{\alpha , \alpha + 1}$ is forcing equivalent to $Q( \mbb{P}_\alpha )$}
\,\rchon$
for all $\alpha < \delta$.

In the rest of this subsection, we recall technical notions developed in \cite{Miyamoto_nice},
which will be used to prove Theorem \ref{thm:K_subcomplete_nice_it}.

First, we recall the notion of hooking.

\begin{definition} \label{def:hook}
Suppose $S$ and $T$ are nested antichains in some iteration.
We say that $S$ \emph{hooks} $T$ and write $S \angle\, T$
if for any $n < \omega$ and any $s \in S_n$
there is $t \in T_{n+1}$ such that $l(t) \leq l(s)$ and $s \rst l(t) \leq t$.
\end{definition}

It is easy to see that if $S$ and $T$ are nested antichains with $S \angle\, T$,
and if $p$ and $q$ are mixtures of $S$ and $T$ up to some ordinal, respectively,
then $p \leq q$. We will use the following lemma. See \cite{Miyamoto_nice} for the proof.

\begin{lemma}[{\cite[Lemma 2.11]{Miyamoto_nice}}] \label{lem:hook}
Let $\delta$ be a limit ordinal and $\langle \mbb{P}_\alpha \mid \alpha \leq \delta \rangle$ be a nice iteration.
Suppose $p,q \in \mbb{P}_\delta$, and $S, s_1$ satisfies the following.
\begin{renumerate}
\item $S$ is a nested antichain in $\langle \mbb{P}_\alpha \mid \alpha < \delta \rangle$, and $s_1 \in S_1$.
\item $p$ is a mixture of $S$ up to $\delta$.
\item $q \rst l( s_1 ) \leq s_1$, and $q \leq p$.
\end{renumerate}
Then there is a nested antichain $T$ in $\langle \mbb{P}_\alpha \mid \alpha < \delta \rangle$ such that
$q$ is a mixture of $T$ up to $\delta$ and $T \angle\, S$.
\end{lemma}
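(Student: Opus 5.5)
We need to prove Lemma 2.11 of Miyamoto (the hooking lemma): given a mixture $p$ of $S$ up to $\delta$, $s_1\in S_1$, and $q\le p$ with $q\rst l(s_1)\le s_1$, we build a nested antichain $T$ with $q$ a mixture of $T$ up to $\delta$ and $T \angle\, S$.

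The plan is to define $T$ level by level, shadowing $S$ but one level ahead and intersected with $q$.

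\textbf{Root.} Put $T_0=\{q\rst l(s_1)\}$. This works for hooking at level $0$, since $t_0:=q\rst l(s_1)\le s_1$ and $s_1\in S_1$.

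\textbf{Levels.} Suppose $t\in T_n$ has been built and is attached to some $s\in S_{n+1}$ with $l(s)\le l(t)$ and $t\rst l(s)\le s$. We maintain the invariants $t\le q\rst l(t)$ and $l(t)\ge l(s)$.

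1. Below $t$ in $\mbb{P}_{l(t)}$, choose a maximal antichain of conditions $u$. Each $u$ is either compatible with, and hence can be refined below, $s'\rst l(t)$ for exactly one $s'\in\suc(s)$ with $l(s')\le l(t)$, or else refines some $s'\rst l(t)$ with $l(s')>l(t)$. This is possible because $\langle s'\rst l(s)\rangle$ is a maximal antichain below $s$, and because $t\le q\rst l(t)\le p\rst l(t)$, so clause (iii) of the mixture definition ties $p$ to the $s'$.
2. For each such $u$ with associated $s'$, let $t'$ be $u^\frown q\rst[l(t),\max(l(t),l(s')))$, which is a condition by iteration axiom (iii). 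Put $t'$ into $\suc(t)$ attached to $s'$.
3. Check the invariants for $t'$. Since $q\le p$ and $p$ mixes $S$, we get $t'\rst l(s')\le s'$ via clause (iii) of the mixture definition. Hence the hook $t'\rst l(s')\le s'$ with $s'\in S_{n+2}$ and $l(s')\le l(t')$ holds. Also $\langle t'\rst l(t)\rangle$ is a maximal antichain below $t$, so $T$ is a nested antichain.

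\textbf{$q$ is a mixture of $T$.} We verify clauses (i)--(iv) of the definition of mixture up to each $\beta<\delta$.

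Clauses (i)--(iii) follow directly, since every $t\in T$ is obtained by restricting $q$ after a prefix.

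Clause (iv) is the main obstacle. Suppose $u\le q\rst\alpha$ forces a branch $\langle t_n\rangle$ of $T$ through the generic with all $l(t_n)<\alpha$. The attached $s$'s then form a branch of $S$ through the generic, with $l(s_{n+1})\le l(t_n)<\alpha$. By clause (iv) for $p$, this gives $u^\frown p\rst[\alpha,\beta)\equiv u^\frown 1$. We need the same for $q$.

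The approach is to use that $q$ itself lies in the limit stage $\mbb{P}_\delta$ of a nice iteration. So $q$ is $(R,\delta)$-nice for some nested antichain $R$, and we intertwine $R$ into the construction. At step 1 we additionally refine $u$ below elements of the next level of $R$, attaching $t$ to pairs $(s,r)$. A generic branch of $T$ then also yields a generic branch of $R$ below $\alpha$, and clause (iv) for $q$ as a mixture of $R$ gives $u^\frown q\rst[\alpha,\beta)\equiv u^\frown 1$. Some bookkeeping is needed to keep lengths increasing with $\max(l(s),l(r))$, so that the hooking inequality $l(s')\le l(t')$ is preserved.

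Finally, we conclude that $q$ is a mixture of $T$, and it follows that $T\angle\, S$.
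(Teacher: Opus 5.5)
Your final construction is correct. The paper gives no proof of this lemma; it only refers to \cite{Miyamoto_nice}. Your argument is the standard one.

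The key point is the one you isolate yourself. The $S$-only version of $T$ gives $T\angle\,S$ and clauses (i)--(iii), but not, in general, clause (iv). A generic branch of $S$ only yields $u^\frown p\rst[\alpha,\delta)\equiv u^\frown 1_\delta\rst[\alpha,\delta)$, while $q$ may carry extra information on the tail. So one must also refine along a nested antichain $R$ witnessing $q\in\mbb{P}_\delta$.

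In a write-up, discard the $S$-only construction and make the bookkeeping explicit:
\begin{enumerate}
\item Take the root to be $q\rst\max\{l(s_1),l(\nart(R))\}$.
\item Let each $t\in T_n$ carry a pair $(s,r)\in S_{n+1}\times R_n$ with $l(s),l(r)\le l(t)$, $t\rst l(s)\le s$, $t\rst l(r)\le r$ and $t\le q\rst l(t)$.
\item Let the successors of $t$ be $u^\frown q\rst[l(t),\max\{l(t),l(s'),l(r')\})$, for $u$ ranging over a maximal antichain below $t$ with $u\rst l(s)\le s'\rst l(s)$ and $u\rst l(r)\le r'\rst l(r)$.
\end{enumerate}
The check $t'\rst l(r')\le r'$ is the same computation as your check $t'\rst l(s')\le s'$, using clause (iii) for $q$ and $R$ in place of clause (iii) for $p$ and $S$.

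You should also state that $(R,\delta)$-niceness of $q$ upgrades to $q$ being a mixture of $R$ up to $\delta$. This follows from clause (v) of the definition of an iteration, and it is what clause (iv) at $\beta=\delta$ requires.
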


Next, we recall the notion of fusion structures.

\begin{definition} \label{def:fusion_structure}
Let $\delta$ be a limit ordinal, $\langle \mbb{P}_\alpha \mid \alpha \leq \delta \rangle$ be a nice iteration
and $S$ be a nested antichain in $\langle \mbb{P}_\alpha \mid \alpha < \delta \rangle$.
A \emph{fusion structure} of $S$ is a sequence
$\langle q^{(s,n)} , T^{(s,n)} \mid n < \omega , \, s \in S_n \rangle$ which satisfies the following properties
for all $n < \omega$ and $s \in S_n$.
\begin{renumerate}
\item $T^{(s,n)}$ is a nested antichain in $\langle \mbb{P}_\alpha \mid \alpha < \delta \rangle$.
\item $q^{(s,n)}$ is a mixture of $T^{(s,n)}$ up to $\delta$.
\item $l( \nart ( T^{(s,n)} ) ) = l(s)$, and $s \leq \nart ( T^{(s,n)} )$.
\item If $s' \in \suc^n_S (s)$, then $T^{(s' , n+1)} \angle\, T^{(s,n)}$, and so $q^{(s' , n+1)} \leq q^{(s,n)}$.
\end{renumerate}
\end{definition}

The following is a key lemma on a fusion structure.
See \cite{Miyamoto_nice} for the proof.

\begin{lemma}[{\cite[Proposition 3.5]{Miyamoto_nice}}] \label{lem:fusion}
Let $\delta$ be a limit ordinal and $\langle \mbb{P}_\alpha \mid \alpha \leq \delta \rangle$ be an iteration.
Suppose $S$ is a nested antichain in $\langle \mbb{P}_\alpha \mid \alpha < \delta \rangle$,
and $\langle q^{(s,n)} , T^{(s,n)} \mid n < \omega , \, s \in S_n \rangle$ is a fusion structure of $S$.
Assume $G_\delta$ is a $\mbb{P}_\delta$-generic filter over $V$ containing a mixture of $S$ up to $\delta$.
Then, in $V[G]$, there is $\langle s_n \mid n < \omega \rangle \in \prod_{n < \omega} S_n$
such that $s_{n+1} \in \suc^n_S ( s_n )$, $s_n \in G_\delta \rst l( s_n )$ and $q^{( s_n ,n)} \in G_\delta$ for all $n < \omega$.
\end{lemma}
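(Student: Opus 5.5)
The proof has two stages. First, get the branch $\langle s_n \mid n<\omega\rangle$ through $S$ from the mixture of $S$ in $G_\delta$. Second, show that each $q^{(s_n,n)}$ lies in $G_\delta$ by building, in $V[G_\delta]$, a generic branch through every $T^{(s_n,n)}$ using the hooking relations; a mixture whose nested antichain has a generic branch should then belong to the generic.

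\emph{Stage 1.} Let $p \in G_\delta$ be a mixture of $S$ up to $\delta$. By clause (i) of Definition \ref{def:mixture}, $p \rst l(s_0) \equiv s_0$, so $s_0 \in G_\delta \rst l(s_0)$. Suppose $s_n \in S_n$ with $s_n \in G_\delta \rst l(s_n)$ has been chosen. By (v) of Definition \ref{def:nested_antichain}, $\{ s' \rst l(s_n) \mid s' \in \suc(s_n)\}$ is a maximal antichain below $s_n$ in $\mbb{P}_{l(s_n)}$. Hence exactly one $s'$ has $s' \rst l(s_n) \in G_\delta \rst l(s_n)$. Clause (iii) of the mixture definition gives $s' \rst l(s_n)^\frown p \rst [l(s_n), l(s')) \equiv s'$. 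Since $p \in G_\delta$, this yields $s' \in G_\delta \rst l(s')$, and we set $s_{n+1} := s'$. This part is routine.

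\emph{Stage 2.} Fix $n$ and write $T := T^{(s_n,n)}$. By (iii) of Definition \ref{def:fusion_structure}, $s_n \le \nart(T)$ and $l(\nart(T)) = l(s_n)$, so $\nart(T) \in G_\delta \rst l(s_n)$. For $k \ge 1$, apply hooking repeatedly along $T^{(s_{n+k},n+k)} \angle\, T^{(s_{n+k-1},n+k-1)} \angle \cdots \angle\, T$, starting from the root $\nart(T^{(s_{n+k},n+k)}) \ge s_{n+k}$. The root level of $T^{(s_{n+k},n+k)}$ sits (after restriction) below some node $t_k \in T_k$ with $l(t_k) \le l(s_{n+k})$. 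Since $s_{n+k} \in G_\delta \rst l(s_{n+k})$, we get $t_k \in G_\delta \rst l(t_k)$. Because $T$ is a nested antichain, compatibility forces the nodes $t_k$ to form a single branch: $t_{k+1} \in \suc(t_k)$, using uniqueness in the maximal antichains. Thus in $V[G_\delta]$ there is a branch $\langle t_k \rangle$ through $T$ with every $t_k$ generic.

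\emph{Main obstacle.} The remaining and hardest step is to conclude $q^{(s_n,n)} \in G_\delta$ from the existence of this generic branch. I would show by induction on $\alpha \le \delta$ that $q^{(s_n,n)} \rst \alpha \in G_\delta \rst \alpha$. For $\alpha \le \sup_k l(t_k)$, use clauses (ii)--(iii) of Definition \ref{def:mixture}: these identify $q \rst l(t_{k+1})$ with $t_{k+1} \rst l(t_k)^\frown q \rst [l(t_k), l(t_{k+1}))$. Together with $t_k \in G_\delta \rst l(t_k)$, this pushes membership in the generic upward along the branch. At limits use (v) of the iteration definition. For $\alpha$ beyond $\gamma := \sup_k l(t_k)$, use clause (iv): any $u \in G_\delta \rst \gamma'$ (with $\gamma'$ above every $l(t_k)$) extending $q \rst \gamma'$ forces ($\ast$). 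So the tail of $q$ above $\gamma'$ is equivalent to the trivial tail $1 \rst [\gamma',\alpha)$, which lies in the generic. The delicate point is the limit case $\alpha = \gamma$ itself when $\gamma < \delta$. There one must argue via separativity that $q \rst \gamma$ is forced into $G_\gamma$ once all $q \rst l(t_k)$ are. I expect to handle it by (v) of the iteration definition applied to $\gamma$ as a limit, with a density argument below conditions deciding the branch.
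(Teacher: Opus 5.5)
Your Stages 1 and 2 are correct. The hooking chain gives a generic node on every level of $T^{(s_n,n)}$, and uniqueness in the antichains makes these nodes a branch $\langle t_k\rangle$. Also $q\rst l(t_k)\in G_\delta\rst l(t_k)$ follows at once from clause (ii) of Definition \ref{def:mixture}; clause (iii) is not needed there. The limit case you call delicate is routine: for limit $\gamma$, clause (v) of an iteration together with separativity gives $q\rst\gamma\in G_\delta\rst\gamma$ once $q\rst\alpha\in G_\delta\rst\alpha$ for all $\alpha<\gamma$.

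\textbf{The gap.} It lies in the case where $\gamma:=\sup_k l(t_k)<\delta$ is attained, i.e.\ $l(t_k)=\gamma$ for all large $k$. This case really occurs, e.g.\ for the constant tree $T^{(p,0)}$ in Case 1 of Proposition \ref{prop:K_subcomplete_nice_it}. You invoke clause (iv) at some $\gamma'$ strictly above all $l(t_k)$. That presupposes $q\rst\gamma'\in G_\delta\rst\gamma'$, i.e.\ that coordinate $\gamma$ of $q$ is already known to be in the generic. Nothing in your induction gives this, since along the branch clauses (ii) and (iii) only concern $q\rst\gamma$.

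In fact, with ($\ast$) read literally ($l(s_n)<\alpha$), coordinate $\gamma$ is not constrained at all. Let $S$ and every $T^{(s,n)}$ be the constant tree all of whose nodes are $1_\eta$. Then every $q\in\mbb{P}_\delta$ with $q\rst\eta\equiv 1_\eta$ that is trivial from $\eta+1$ on satisfies (i)--(iv) of Definition \ref{def:mixture}, whatever $q(\eta)$ is. Choosing such a nontrivial $q$ for all $q^{(s,n)}$ yields a fusion structure for which the conclusion fails.

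\textbf{The missing ingredient.} You need the intended form of clause (iv). This form is what the uniqueness of mixtures up to $\equiv$, asserted right after Definition \ref{def:mixture}, requires. In ($\ast$) one should only require $l(s_n)\le\alpha$, so that $q$ is trivial from $\alpha$ on once the generic branch has stopped at length $\le\alpha$. With that, finish uniformly at $\alpha=\gamma$:
\begin{itemize}
\item If $\gamma=\delta$, clause (v) already gives $q\in G_\delta$.
\item Otherwise $q\rst\gamma\in G_\delta\rst\gamma$, and ($\ast$) holds in $V[G_\delta\rst\gamma]$ because $\langle t_k\rangle$ is definable there. So some $u\in G_\delta\rst\gamma$ with $u\le q\rst\gamma$ forces ($\ast$). (Not every such $u$ does, contrary to what you wrote.)
\item Then $u^\frown q\rst[\gamma,\delta)\equiv u^\frown 1_\delta\rst[\gamma,\delta)\in G_\delta$, and since $u^\frown q\rst[\gamma,\delta)\le q$, we get $q\in G_\delta$.
\end{itemize}
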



\subsection{Nice iterations of $\vec{K}$-subcomplete forcings} \label{subsec:K_subcomplete_nice_it}

Here we prove Theorem \ref{thm:K_subcomplete_nice_it}.
It will be proved by induction on the length $\delta$ of the iteration
$\langle \mbb{P}_\alpha \mid \alpha \leq \delta \rangle$.
In fact, as is usual, we prove something stronger by induction.
We use the following notation.

\begin{definition} \label{def:relative_verify_K_subcomplete}
Let $\vec{K} = \langle K_\xi \mid \xi < \omega_1 \rangle$ be an adequate model sequence
and $\vec{\mbb{P}} = \langle \mbb{P}_\alpha \mid \alpha \leq \delta \rangle$ be an iteration.
For $\alpha \leq \delta$ let $\dot{G}_\alpha$ be the canonical name for a $\mbb{P}_\alpha$-generic filter.
Suppose $\theta$ is a regular cardinal with $\mbb{P}_\delta \in \mcal{H}_\theta$,
$a \in \mcal{H}_\theta$, and $\beta < \delta$.
We say that $\theta$ and $a$ \emph{verify} $\vec{K}$-\emph{subcompleteness} of $\mbb{P}_\delta$
\emph{relative to} $\beta$ if the following holds: For any
$A$, $\chi$, $b$, $\bar{M}$, $\bar{b}$,
$\vec{\bar{\mbb{P}}} = \langle \bar{\mbb{P}}_{\bar{\alpha}} \mid \bar{\alpha} \leq \bar{\delta} \rangle$,
$\bar{\beta}$, $\bar{G}_{\bar{\beta}}$, $\bar{G}_{\bar{\delta}}$ and $p$ if
\begin{renumerate}
\item $A$ is a set, $\chi$ is an ordinal, $\mcal{H}_\theta \subseteq L^A_\chi \models \mrm{ZFC}^-$,
and $b \in L^A_\chi$,
\item $\bar{M}$ is a $\vec{K}$-good model,
\item $\bar{b} , \vec{\bar{\mbb{P}}} , \bar{\beta} \in \bar{M}$,
$\vec{\bar{\mbb{P}}}$ is an iteration in $\bar{M}$, and $\bar{\beta} < \bar{\delta}$,
\item $\bar{G}_{\bar{\delta}}$ is a $\bar{\mbb{P}}_{\bar{\delta}}$-generic filter over $\bar{M}$
with $\bar{G}_{\bar{\delta}} \in K_{\omega_1^{\bar{M}}}$,
and $\bar{G}_{\bar{\beta}} = \bar{G}_{\bar{\delta}} \rst \bar{\beta}$,
\item $p \in \mbb{P}_\beta$, and $p$ forces the existence of $\sigma : \bar{M} \prec L^A_\chi$
with $a \in \ran ( \sigma )$,
$\sigma ( \langle \vec{\bar{\mbb{P}}} , \bar{\beta} , \bar{b} \rangle )
= \langle \vec{\mbb{P}} , \beta , b \rangle$
and $\sigma [ \bar{G}_{\bar{\beta}} ] \subseteq \dot{G}_\beta$,
\end{renumerate}
then there is $p^* \in \mbb{P}_\delta$ such that
$p^* \rst \beta = p$, and
$p^*$ forces the existence of $\sigma^* : \bar{M} \prec L^A_\chi$ with
$\sigma^* ( \langle \vec{\bar{\mbb{P}}} , \bar{b} \rangle ) = \langle \vec{\mbb{P}} , b \rangle$
and $\sigma^* [ \bar{G}_{\bar{\delta}} ] \subseteq \dot{G}_\delta$.
\end{definition}

We prove the following proposition by induction on $\delta$.
Note that Theorem \ref{thm:K_subcomplete_nice_it} follows from this proposition for $\alpha = 0$.

\begin{prop} \label{prop:K_subcomplete_nice_it}
Let $\vec{K}$ be an adequate model sequence and $\vec{\mbb{P}} = \langle \mbb{P}_\alpha \mid \alpha \leq \delta \rangle$
be a nice iteration of $\vec{K}$-subcomplete forcings.
Suppose $\beta < \delta$.
Then there are a regular cardinal $\theta$ and $a \in \mcal{H}_\theta$
verifying the $\vec{K}$-subcompleteness of $\mbb{P}_\delta$ relative to $\beta$.
\end{prop}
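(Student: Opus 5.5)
We argue by induction on $\delta$, following the Fuchs--Switzer proof for $\infty$-subcompleteness (Fact \ref{fact:infty_subcomplete_nice_it}) and Miyamoto's treatment of subproper nice iterations. The parameter $\theta$ is chosen large enough that $\vec{\mbb{P}} \in \mcal{H}_\theta$. It must also dominate witnesses $\theta_\alpha$ for the $\vec{K}$-subcompleteness of each $\dot{\mbb{P}}_{\alpha,\alpha+1}$, and for the relative statements at all $\beta' < \delta' < \delta$ supplied by the induction hypothesis. We take $a$ to code $\vec{\mbb{P}}$, $\vec{K}$, a name for the sequence of these witnesses $\langle \dot\theta_\alpha, \dot a_\alpha\rangle$, and the relative witnesses. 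By remark (3) after Definition \ref{def:K_subcomplete}, enlarging $\theta$ is harmless, and since $a \in \ran(\sigma)$, all this information is reflected into $\bar{M}$. The proof splits into three cases.

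\emph{Successor case $\delta = \gamma+1$.} If $\beta < \gamma$, we first use the induction hypothesis for $\langle \mbb{P}_\alpha \mid \alpha \le \gamma\rangle$ relative to $\beta$ to obtain $p' \in \mbb{P}_\gamma$ with $p' \rst \beta = p$. This $p'$ forces an embedding $\sigma' : \bar{M} \prec L^A_\chi$ moving $\bar{G}_{\bar\gamma} := \bar{G}_{\bar\delta} \rst \bar\gamma$ into $\dot{G}_\gamma$. This reduces us to $\beta = \gamma$. Now work in $V[G_\gamma]$ with $p \in G_\gamma$. By remark (4), $\sigma$ lifts to $\sigma^{**} : \bar{M}[\bar{G}_{\bar\gamma}] \prec L^A_\chi[G_\gamma]$. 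The model $\bar{M}[\bar{G}_{\bar\gamma}]$ is still $\vec{K}$-good: it is full, lies in $K_{\omega_1^{\bar M}}$ (as $\bar{G}_{\bar\delta}$ does), and has the same $\omega_1$ because no reals are added, by Lemma \ref{lem:K_subcomplete_reals} and the fact that $\bar{G}$ is generic over $\bar{M}$. Since $\vec{K}$ remains adequate in $V[G_\gamma]$, and $L^A_\chi[G_\gamma]$ is of the required form with $\mcal{H}_{\theta_\gamma}^{V[G_\gamma]}$ included, the $\vec{K}$-subcompleteness of $\mbb{P}_{\gamma,\gamma+1}$ applies to the quotient generic $\bar{G}_{\bar\gamma,\bar\delta}$. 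That quotient generic lies in $K_{\omega_1^{\bar M}}$. This yields a condition $\dot q$ in the quotient. Nice-iteration clause (i) then gives $p^* \in \mbb{P}_\delta$ with $p^* \rst \gamma \equiv p$, and we can adjust it so that $p^* \rst \gamma = p$ using the iteration axioms. Finally, restricting the resulting embedding from $\bar{M}[\bar{G}_{\bar\gamma}]$ back to $\bar{M}$ gives $\sigma^*$.

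\emph{Limit case.} Let $\delta$ be a limit ordinal. In $\bar{M}$, fix a cofinal $\omega$-sequence $\bar\beta = \bar\beta_0 < \bar\beta_1 < \cdots$ in $\bar\delta$. Note that $\sigma(\bar\beta_n)$ depends only on $\sigma$ restricted to a countable set, but $\sigma$ itself is only forced to exist, which is why the bookkeeping must be done through names. Enumerate the dense sets of $\bar{\mbb{P}}_{\bar\delta}$ in $\bar{M}$ as $\langle \bar D_n \rangle$. We then build a nested antichain $S$ together with a fusion structure (Definition \ref{def:fusion_structure}). At stage $n$, for each node $s \in S_n$ with $l(s) = \beta_n$ (more precisely, $s$ decides the value $\sigma(\bar\beta_n)$ and the relevant part of $\sigma$), apply the induction hypothesis to $\langle \mbb{P}_\alpha \mid \alpha \le \beta_{n+1}\rangle$ relative to $\beta_n$. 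This extends $s$ to a condition forcing an embedding that moves $\bar{G}_{\bar\delta} \rst \bar\beta_{n+1}$ into the generic and sends some element of $\bar{G}_{\bar\delta} \cap \bar D_n$ into a condition we control. The children $\suc(s)$ are then a maximal antichain of conditions deciding $\sigma^*_{n+1}$ on the $n$-th element of $\bar{M}$, as in the Fuchs--Switzer argument. The mixture of $S$ up to $\delta$, which exists by niceness (ii) and Lemma \ref{lem:hook}, together with Lemma \ref{lem:fusion}, gives a branch $\langle s_n\rangle$ in $V[G_\delta]$. From this branch we define $\sigma^* := \bigcup_n$ of the decided finite approximations. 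Elementarity of $\sigma^*$ follows because each approximation agrees with some elementary embedding. We also get $\sigma^*[\bar{G}_{\bar\delta}] \subseteq G_\delta$, since each $\bar{G}_{\bar\delta} \rst \bar\beta_n$ is mapped into $G_{\beta_n}$, and conditions of $\bar{\mbb{P}}_{\bar\delta}$ are determined by their restrictions via the mixture representation inside $\bar{M}$.

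\emph{Main obstacle.} The hardest step is the limit case. There we must arrange that the embeddings at successive stages cohere, i.e.\ that $\sigma^*_{n+1}$ agrees with the values already decided at stage $n$. The induction hypothesis only gives a fresh embedding each time, with no control over its agreement with earlier ones. The plan to handle this is to put the previously decided finite fragment into the parameter $\bar b$ at each stage; $\bar b$ was included in Definition \ref{def:relative_verify_K_subcomplete} precisely for this purpose. With that choice, $\sigma^*_{n+1}(\bar b) = b$ forces the agreement. The remaining checks are that every generic filter used lies in $K_{\omega_1^{\bar M}}$, which holds because all of them are restrictions or quotients of $\bar{G}_{\bar\delta}$, and that the required equality $p^* \rst \beta = p$ survives the mixture construction.
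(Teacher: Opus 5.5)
Your successor case is essentially the paper's Case 2. Your coherence device is also what the paper does in Claim 1: you feed the already decided finite part of the embedding into $\bar b$ when you invoke the induction hypothesis relative to $l(s)$. The gap is the final step of the limit case, the inclusion $\sigma^*[\bar G_{\bar\delta}]\subseteq G_\delta$. That step, not coherence, is the real difficulty.

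First, a cofinal $\omega$-sequence in $\bar\delta$ need not exist in $\bar M$. If you choose one in $V$, its image need not be cofinal in $\delta$. For $\delta=\omega_1$ one has $\bar\delta=\omega_1^{\bar M}$ and $\sigma^*\rst\bar\delta=\mathrm{id}$, so $\sup_n\beta_n=\omega_1^{\bar M}<\omega_1$. More generally, $\sup\sigma^*[\bar\delta]<\delta$ whenever $\delta$ keeps uncountable cofinality in $V[G_\delta]$. In that situation, knowing $\sigma^*(\bar r)\rst\beta_n\in G_{\beta_n}$ for all $n$ says nothing, by itself, about the coordinates of $\sigma^*(\bar r)$ in $[\sup_n\beta_n,\delta)$. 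So the slogan ``conditions are determined by their restrictions'' does not yield the conclusion.

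Likewise, the induction hypothesis for $\mbb{P}_{\beta_{n+1}}$ cannot ``send an element of $\bar G_{\bar\delta}\cap\bar D_n$ into a condition we control,'' because such elements have length $\bar\delta$. The fusion structure you mention is never specified, and its output is never used.

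The paper closes this gap by tracking full-length conditions. Enumerate $\bar G_{\bar\delta}=\{\bar r_n\mid n<\omega\}$. Each node $(s,n)$ carries some $\bar q^{(s,n)}\in\bar G_{\bar\delta}$ with $\bar q^{(s,n)}\le\bar r_n$, which in $\bar M$ is a mixture of $\bar T^{(s,n)}$. The next length is not fixed in advance. It is $l(\nart(\bar T'))$, where $\bar T'\angle\,\bar T^{(s,n)}$ is obtained inside $\bar M$ by Lemma \ref{lem:hook} from some $\bar q'\in\bar G_{\bar\delta}$ below both $\bar q^{(s,n)}$ and $\bar r_{n+1}$. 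The embeddings are required to send $\bar q^{(u,m)},\bar T^{(u,m)}$ to $q^{(u,m)},T^{(u,m)}$. The images then form a fusion structure of $S$, and Lemma \ref{lem:fusion} gives $q^{(s_n,n)}\in G_\delta$. Hence $\sigma^*(\bar r_n)\ge\sigma^*(\bar q^{(s_n,n)})=q^{(s_n,n)}$ lies in $G_\delta$.

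Your fixed-sequence route can also be completed, but only with an argument you do not give:
\begin{itemize}
\item Take $\bar r\in\bar G_{\bar\delta}$, which in $\bar M$ is a mixture of some $\bar T$.
\item The branch of $\bar T$ through $\bar G_{\bar\delta}$ is mapped by $\sigma^*$ to a branch of $\sigma^*(\bar T)$ through $G_\delta$. All lengths on this branch lie below $\alpha^*:=\sup\sigma^*[\bar\delta]$.
\item By absoluteness, $(\ast)$ holds in $V[G_\delta\rst\alpha^*]$.
\item Clause (iv) of Definition \ref{def:mixture} then makes $\sigma^*(\bar r)$ trivial above $\alpha^*$, below some condition of $G_\delta\rst\alpha^*$.
\item Finally, $\sigma^*(\bar r)\rst\alpha^*\in G_\delta\rst\alpha^*$ follows from your restrictions, using the limit clause in the definition of an iteration together with separativity.
\end{itemize}
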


\begin{proof}
We prove the proposition by induction on the length $\delta$ of an iteration $\vec{\mbb{P}}$.
We have nothing to do for $\delta = 0$.
Suppose $\delta > 0$, and the proposition holds for all $\delta ' < \delta$.
We prove the proposition for $\delta$.

\bigskip

\noindent
\textbf{Case 1}. $\delta$ is a limit ordinal.

\smallskip

By the induction hypothesis,
for each $\alpha , \gamma$ with $\alpha < \gamma < \delta$,
take $\theta_{\alpha , \gamma}$ and $a_{\alpha , \gamma}$ which verify the $\vec{K}$-subcompleteness
of $\mbb{P}_\gamma$ relative to $\alpha$.
Let $\theta$ be a regular cardinal such that $\theta > \theta_{\alpha , \gamma}$ for all
$\alpha , \gamma$.
Also, let $a := \langle a_{\alpha , \gamma} \mid \alpha < \gamma < \delta \rangle$.
We show that $\theta$ and $a$ verify the $\vec{K}$-subcompleteness of $\mbb{P}_\delta$ relative to $\beta$.

Suppose
$A$, $\chi$, $b$, $\bar{M}$, $\bar{b}$,
$\vec{\bar{\mbb{P}}} = \langle \bar{\mbb{P}}_{\bar{\alpha}} \mid \bar{\alpha} \leq \bar{\delta} \rangle$,
$\bar{\beta}$, $\bar{G}_{\bar{\beta}}$, $\bar{G}_{\bar{\delta}}$ and $p$
satisfy (i)--(v) of Definition \ref{def:relative_verify_K_subcomplete}.
We will show that there is $p^* \in \mbb{P}_\delta$ as in Definition \ref{def:relative_verify_K_subcomplete}.
Let $M := L^A_\chi$.
For each $\bar{\alpha} < \bar{\delta}$, let $\bar{G}_{\bar{\alpha}} := \bar{G}_{\bar{\delta}} \rst \bar{\alpha}$.
Take an enumeration $\langle \bar{c}_n \mid n < \omega \rangle$ of $\bar{M}$ with
$\bar{c}_0 = \emptyset$
and an enumeration $\langle \bar{r}_n \mid n < \omega \rangle$ of $\bar{G}_{\bar{\delta}}$
such that $\bar{r}_0$ is the largest element $\bar{1}_{\bar{\delta}}$ in $\bar{\mbb{P}}_{\bar{\delta}}$.

We will construct a nested antichain $S$ in $\langle \mbb{P}_\alpha \mid \alpha < \delta \rangle$
with $\nart (S) = p$
and a fusion structure $\langle q^{(s,n)} , T^{(s,n)} \mid n < \omega , \, s \in S_n \rangle$ of $S$
together with
$\bar{q}^{(s,n)}$, $\bar{T}^{(s,n)}$, $\bar{\beta}^{(s,n)}$, $\dot{\sigma}^{(s,n)}$, $c^{(s,n)}$
for $n < \omega$ and $s \in S_n$ so that
\begin{renumerate}
\item $\bar{q}^{(s,n)} \in \bar{G}_{\bar{\delta}}$, $\bar{q}^{(s,n)} \leq \bar{r}_n$,
and $\bar{T}^{(s,n)} , \bar{\beta}^{(s,n)} \in \bar{M}$,
\item $\dot{\sigma}^{(s,n)}$ is a $\mbb{P}_{l(s)}$-name, and $s$ forces that
\begin{itemize}
\item $\dot{\sigma}^{(s,n)} : \bar{M} \prec M$, $a \in \ran ( \dot{\sigma}^{(s,n)} )$,
and $\dot{\sigma}^{(s,n)} ( \langle \vec{\bar{\mbb{P}}} , \bar{b} \rangle ) = \langle \vec{\mbb{P}} , b \rangle$,
\item $\dot{\sigma}^{(s,n)} ( \langle \bar{q}^{(u,m)} , \bar{T}^{(u,m)} , \bar{c}_m \rangle )
= \langle q^{(u,m)} , T^{(u,m)} , c^{(u,m)} \rangle$ for all $(u,m) \leq_S (s,n)$,
\item $\dot{\sigma}^{(s,n)} ( \bar{\beta}^{(s,n)} ) = l(s)$, and
$\dot{\sigma}^{(s,n)} [ \bar{G}_{\bar{\beta}^{(s,n)}} ] \subseteq \dot{G}_{l(s)}$.
\end{itemize}
\end{renumerate}

First, assuming the above objects are constructed, we show that there is
$p^*$ as desired.
Since $\vec{\mbb{P}}$ is a nice iteration, we can take a mixture $p^* \in \mbb{P}_\delta$ of $S$.
Note that $p^* \rst \beta \equiv \nart (S) = p$.
So we may assume $p^* \rst \beta = p$.
Suppose $G_\delta$ is a $\mbb{P}_\delta$-generic filter over $V$ with $p^* \in G_\delta$.
Working in $V[ G_\delta ]$, we show that there is $\sigma^* : \bar{M} \prec M$
with $\sigma^* ( \langle \vec{\bar{\mbb{P}}} , b \rangle ) = \langle \vec{\mbb{P}} , b \rangle$,
and $\sigma^* [ \bar{G}_{\bar{\delta}} ] \subseteq G_\delta$.

By Lemma \ref{lem:fusion}, we can take $\langle s_n \mid n < \omega \rangle \in \prod_{n < \omega} S_n$
such that $s_{n+1} \in \suc^n_S ( s_n )$, $s_n \in G_\delta \rst l( s_n )$ and $q^{(s_n , n)} \in G_\delta$ for all $n < \omega$.
Then, define $\sigma^* : \bar{M} \to M$ by $\sigma^* ( \bar{c}_n ) := c^{(s_n , n)}$.
We show that $\sigma^*$ is as desired.

Let $\sigma_n := ( \dot{\sigma}^{(s_n , n)} )^{G_\delta \upharpoonright l( s_n )}$
for each $n < \omega$.
Note that, $\sigma^*$ and $\sigma_n$ coincide on $\{ \bar{c}_m \mid m \leq n \}$
for all $n < \omega$.
It follows that $\sigma^* : \bar{M} \prec M$, since $\sigma_n: \bar{M} \prec M$ for all $n < \omega$,
and $\bar{M} = \{ \bar{c}_m \mid m < \omega \}$. Moreover,
since $\sigma_n ( \langle \vec{\bar{\mbb{P}}} , \bar{b} \rangle ) = \langle \vec{\mbb{P}} , b \rangle$,
we also have that
$\sigma^* ( \langle \vec{\bar{\mbb{P}}} , \bar{b} \rangle ) = \langle \vec{\mbb{P}} , b \rangle$.
Finally, note that $\sigma^* ( \bar{q}^{(s_n , n)} ) = q^{(s_n , n)}$ for all $n < \omega$.
Then
$\sigma^* ( \bar{r}_n ) \geq \sigma^* ( \bar{q}^{(s_n , n)} ) = q^{(s_n , n)} \in G_\delta$
for all $n < \omega$. So $\sigma^* [ \bar{G}_{\bar{\delta}} ] \subseteq G_\delta$
since $\bar{G}_{\bar{\delta}} = \{ \bar{r}_n \mid n < \omega \}$.

We start to construct the above objects.
By recursion on $n$, we construct $S_n$ and
$q^{(s,n)}$, $T^{(s,n)}$, $\bar{q}^{(s,n)}$, $\bar{T}^{(s,n)}$, $\bar{\beta}^{(s,n)}$, $\dot{\sigma}^{(s,n)}$,
$c^{(s,n)}$ for all $s \in S_n$.
($\suc^n_S$ will be constructed when we construct $S_{n+1}$.)
We must construct them so that they satisfy (i),(ii) above
and (iii)--(vi) below. (iii)--(vi) are properties assuring that
$S$ will be a nested antichain with $\nart (S) = p$
and that $\langle q^{(s,n)} , T^{(s,n)} \mid n < \omega , \, s \in S_n \rangle$ will be a fusion structure of $S$.
\begin{renumerate}
\addtocounter{enumi}{2}
\item $S_0 = \{ p \}$, and
$S_{n+1} = \bigcup_{s \in S_n} \suc^n_S (s) \subseteq \bigcup_{\alpha < \delta} \mbb{P}_\alpha$.
\item If $s' \in \suc^n_S (s)$, then $l(s') \geq l(s)$, and $s' \rst l(s) \leq s$. Also, for all $s \in S_n$,
$\langle s' \rst l(s) \mid s' \in \suc^n_S (s) \rangle$ is a maximal antichain below $s$ in $\mbb{P}_{l(s)}$.
\item $T^{(s,n)}$ is a nested antichain in $\langle \mbb{P}_\alpha \mid \alpha < \delta \rangle$
with $l( \nart ( T^{(s,n)} ) ) = l(s)$ and $s \leq \nart ( T^{(s,n)} )$,
and $q^{(s,n)}$ is a mixture of $T^{(s,n)}$ up to $\delta$.
\item If $s' \in \suc^n_S (s)$, then $T^{(s' , n+1)} \angle\, T^{(s,n)}$.
\end{renumerate}

First, suppose $n = 0$. Let $S_0 := \{ p \}$ and $q^{(p,0)} := 1_\delta$.
Define $T^{(p,0)}$ by $T^{(p,0)}_k := \{ 1_\beta \}$
and $\suc^k_{T^{(p,0)}} (1_\beta ) = \{ 1_\beta \}$ for all $k < \omega$.
Note that (iii) and (v) hold for $n = 0$ and $s = p$.
Next, let $\dot{\sigma}^{(p,0)}$ be a $\mbb{P}_\beta$-name of $\sigma$
in (v) of Definition \ref{def:relative_verify_K_subcomplete}.
Also, let $\bar{q}^{(p,0)} := \bar{1}_{\bar{\delta}}$,
and define $\bar{T}^{(p,0)}$ by $\bar{T}^{(p,0)}_k := \{ \bar{1}_{\bar{\beta}} \}$
and $\suc^k_{\bar{T}^{(p,0)}} ( \bar{1}_{\bar{\beta}} ) := \{ \bar{1}_{\bar{\beta}} \}$ for all $k < \omega$,
where $\bar{1}_{\bar{\beta}}$ is the largest element of $\bar{\mbb{P}}_{\bar{\beta}}$.
Let $\bar{\beta}^{(p,0)} := \bar{\beta}$ and $c^{(s,0)} := \emptyset$.
Clearly, (i) and (ii) hold for $n = 0$ and $s = p$. (iv) and (vi) are irrelevant for $n = 0$.
This completes the construction for $n = 0$.

Next, suppose $S_n$ and
$q^{(s,n)}$, $T^{(s,n)}$, $\bar{q}^{(s,n)}$, $\bar{T}^{(s,n)}$, $\bar{\beta}^{(s,n)}$,
$\dot{\sigma}^{(s,n)}$, $c^{(s,n)}$ for all $s \in S_n$ have been constructed.
We construct $\suc^n_S$, $S_{n+1}$ and
$q^{(s', n+1)}$, $T^{(s',n+1)}$, $\bar{q}^{(s',n+1)}$, $\bar{T}^{(s',n+1)}$, $\bar{\beta}^{(s',n+1)}$,
$\dot{\sigma}^{(s',n+1)}$, $c^{(s',n+1)}$ for all $s' \in S_{n+1}$.

For $s \in S_n$, let $E_s$ be the collection of all candidates of elements of $\suc^n_S (s)$.
That is, let $E_s$ be the set all $s' \in \bigcup_{\alpha < \delta} \mbb{P}_\alpha$ such that
we have the following for some $q'$, $T'$, $\bar{q}'$, $\bar{T}'$, $\bar{\beta} '$, $\dot{\sigma}'$ and $c'$.
\begin{renumerate}
\addtocounter{enumi}{6}
\item $l(s') \geq l(s)$, and $s' \rst l(s) \leq s$.
\item $T'$ is a nested antichain in $\langle \mbb{P}_\alpha \mid \alpha < \delta \rangle$
with $l( \nart (T') ) = l(s')$ and $s' \leq \nart (T')$, and $q'$ is a mixture of $T'$ up to $\delta$.
\item $T' \angle\, T^{(s,n)}$,
\item $\bar{q}' \in \bar{G}_{\bar{\delta}}$, $\bar{q}' \leq \bar{r}_{n+1}$, and
$\bar{T}' , \bar{\beta}' \in \bar{M}$.
\item $\dot{\sigma}'$ is a $\mbb{P}_{l(s')}$-name, and $s'$ forces that
\begin{itemize}
\item $\dot{\sigma}' : \bar{M} \prec M$, $a \in \ran ( \dot{\sigma} ' )$,
and $\dot{\sigma}' ( \langle \vec{\bar{\mbb{P}}} , \bar{b} \rangle ) = \langle \vec{\mbb{P}} , b \rangle$,
\item $\dot{\sigma}' ( \langle \bar{q}^{(u,m)} , \bar{T}^{(u,m)} , \bar{c}_m \rangle )
= \langle q^{(u,m)} , T^{(u,m)} , c^{(u,m)} \rangle$ for any $(u,m) \leq_S (s,n)$, and
$\dot{\sigma} ' ( \langle \bar{q}' , \bar{T}' , \bar{c}_{n+1} \rangle ) = \langle q' , T' , c' \rangle$,
\item
$\dot{\sigma} ' ( \bar{\beta}' ) = l(s')$, and $\dot{\sigma} ' [ \bar{G}_{\bar{\beta}'} ] \subseteq \dot{G}_{l(s')}$.
\end{itemize}
\end{renumerate}
For $s \in S_n$, let $D_s := \{ s' \rst l(s) \mid s' \in E_s \}$. We claim the following.

\begin{claim}
For any $s \in S_n$, $D_s$ is dense below $s$ in $\mbb{P}_{l(s)}$.
\end{claim}

\noindent
\emph{Proof of Claim 1}.
Suppose $s \in S_n$. Take an arbitrary $u \leq s$ in $\mbb{P}_{l(s)}$.
We find $s' \in E_s$ with $s' \rst l(s) \leq u$.

By (i),(ii),(v) for $n$ and $s$, in $\bar{M}$, $\bar{T}^{(s,n)}$ is a nested antichain
in $\langle \bar{\mbb{P}}_{\bar{\alpha}} \mid \bar{\alpha} < \bar{\delta} \rangle$,
and $\bar{q}^{(s,n)}$ is a mixture of $\bar{T}^{(s,n)}$.
Since $\bar{q}^{(s,n)} \in \bar{G}_{\bar{\delta}}$, we can take $\bar{t} \in \bar{T}^{(s,n)}_1$
with $\bar{t} \in \bar{G}_{l( \bar{t} )}$.
Then, we can take $\bar{q}' \in \bar{G}_{\bar{\delta}}$ such that
$\bar{q}' \leq \bar{q}^{(s,n)} , \bar{r}_{n+1}$ and $\bar{q}' \rst l( \bar{t} ) \leq \bar{t}$,
since $\bar{q}^{(s,n)} , \bar{r}_{n+1} \in \bar{G}_{\bar{\delta}}$ and $\bar{t} \in \bar{G}_{l( \bar{t} )}$.
By Lemma \ref{lem:hook}, in $\bar{M}$, we can take a nested antichain $\bar{T}'$
in $\langle \bar{\mbb{P}}_{\bar{\alpha}} \mid \bar{\alpha} < \bar{\delta} \rangle$
such that $\bar{T}' \angle\, \bar{T}^{(s,n)}$, and $\bar{q}'$ is a mixture of $\bar{T}'$
up to $\bar{\delta}$.
In $\bar{M}$, let $\bar{t}' := \nart ( \bar{T}' )$ and $\bar{\beta}' := l( \bar{t}' )$.
Note that $\bar{t} ' \in \bar{G}_{\bar{\beta}'}$ since a mixture $\bar{q}'$ of $\bar{T}'$
belongs to $\bar{G}_{\bar{\delta}}$.

Take $v \leq u$ in $\mbb{P}_{l(s)}$ and $q' , T' , c' , \beta ' \in M$ such that
\begin{renumerate}
\addtocounter{enumi}{11}
\item $v \Vdash_{l(s)} \lchon\,
\dot{\sigma}^{(s,n)} ( \langle \bar{q} ' , \bar{T} ' , \bar{c}_{n+1} , \bar{\beta} ' \rangle ) =
\langle q' , T' , c' , \beta ' \rangle
\,\rchon$.
\end{renumerate}
Note that
\begin{renumerate}
\addtocounter{enumi}{12}
\item $v \Vdash_{l(s)} \lchon\, a_{l(s) , \beta '} \in \dot{\sigma}^{(s,n)} \,\rchon$
\end{renumerate}
since $v$ forces that
$l(s) , \beta ' , a = \langle a_{\alpha , \gamma} \mid \alpha < \gamma < \delta \rangle \in \ran ( \dot{\sigma}^{(s,n)} )$.

Recall that $\theta$ and $a_{l(s) , \beta '}$ verify the $\vec{K}$-subcompleteness
of $\mbb{P}_{\beta '}$ relative to $l(s)$.
Then, by (ii),(xii),(xiii), we can take $s' \in \mbb{P}_{\beta '}$ with $s' \rst l(s) = v$
and $\mbb{P}_{\beta '}$-name $\dot{\sigma} '$ satisfying (xi).
Note that $s' \leq \nart (T')$ since $\nart ( \bar{T}' ) = \bar{t}' \in \bar{G}_{\bar{\beta}'}$,
and $s'$ forces that
$\nart (T) = \dot{\sigma}' ( \nart ( \bar{T} ' ) ) \in \dot{\sigma}' [ \bar{G}_{\bar{\beta}'} ] \subseteq \dot{G}_{l(s')}$.
Then, $s' \rst l(s) \leq u$, and
$q' , T' , \bar{q}' , \bar{T}' , \bar{\beta}' , \dot{\sigma}' , c'$ witnesses that $s' \in E_s$.
So $s'$ is as desired.
\hfill $\square$ (Claim 1)

\bigskip

For each $s \in S_n$, we construct $\suc^n_S (s)$ and
$q^{(s', n+1)}$, $T^{(s',n+1)}$, $\bar{q}^{(s',n+1)}$, $\bar{T}^{(s',n+1)}$, $\bar{\beta}^{(s',n+1)}$,
$\dot{\sigma}^{(s',n+1)}$, $c^{(s',n+1)}$ for all $s' \in \suc^n_S (s)$.
Fix $s \in S_n$. By the claim above, we can take $A_s \subseteq D_s$ which is a maximal antichain
below $s$ in $\mbb{P}_{l(s)}$.
For each $u \in A_s$, choose $s_u ' \in E_s$ with $s_u ' \rst l(s) = u$.
Then, let $\suc^n_S (s) := \{ s_u ' \mid u \in A_s \}$.
Moreover, for each $s' \in \suc^n_S (s)$,
let $q^{(s', n+1)}$, $T^{(s',n+1)}$, $\bar{q}^{(s',n+1)}$, $\bar{T}^{(s',n+1)}$, $\bar{\beta}^{(s',n+1)}$,
$\dot{\sigma}^{(s',n+1)}$, $c^{(s',n+1)}$ be $q'$, $T'$, $\bar{q}'$, $\bar{T}'$, $\bar{\beta}'$,
$\dot{\sigma}'$ and $c'$ witnessing $s' \in E_s$.

Finally, let $S_{n+1} := \bigcup_{s \in S_n} \suc^n_S (s)$.
We constructed $\suc^n_S$, $S_{n+1}$ and
$q^{(s', n+1)}$, $T^{(s',n+1)}$, $\bar{q}^{(s',n+1)}$, $\bar{T}^{(s',n+1)}$, $\bar{\beta}^{(s',n+1)}$,
$\dot{\sigma}^{(s',n+1)}$, $c^{(s',n+1)}$ for all $s' \in S_{n+1}$.
Clearly, they are as desired.

This completes the proof for Case 1.
\hfill $\square$ (Case 1)

\bigskip

\noindent
\textbf{Case 2}. $\delta$ is a successor ordinal.

\smallskip

Let $\gamma := \delta - 1$.
By the induction hypothesis, we can take a regular cardinal $\theta '$
and $a' \in \mcal{H}_{\theta '}$ which verify the $\vec{K}$-subcompleteness
of $\mbb{P}_\gamma$ relative to $\beta$. Since $1_\gamma$ forces that
$\dot{\mbb{P}}_{\gamma , \delta}$ is $\vec{K}$-subcomplete, we can also take a regular cardinal $\theta ''$
and a $\mbb{P}_\gamma$-name $\dot{a}''$ such that $1_\gamma$ forces $\theta ''$ and $\dot{a}''$ to verify
the $\vec{K}$-subcompleteness of $\dot{\mbb{P}}_{\gamma , \delta}$.
Let $\theta := \max \{ \theta ' , \theta '' \}$ and $a := \langle a' , \dot{a} '' \rangle$.
We show that $\theta$ and $a$ verify the $\vec{K}$-subcompleteness of $\mbb{P}_\delta$
relative to $\beta$.

Suppose
$A$, $\chi$, $b$, $\bar{M}$, $\bar{b}$,
$\vec{\bar{\mbb{P}}} = \langle \bar{\mbb{P}}_{\bar{\alpha}} \mid \bar{\alpha} \leq \bar{\delta} \rangle$,
$\bar{\beta}$, $\bar{G}_{\bar{\beta}}$, $\bar{G}_{\bar{\delta}}$, $p$
satisfies (i)--(v) of Definition \ref{def:relative_verify_K_subcomplete}.
We will show that there is $p^*$ as in Definition \ref{def:relative_verify_K_subcomplete}.
Let $M := L^A_\chi$, $\bar{\gamma} := \bar{\delta} -1$ and
$\bar{G}_{\bar{\gamma}} := \bar{G}_{\bar{\delta}} \rst \bar{\gamma}$.

Note that $a' \in \ran ( \sigma )$ for $\sigma$ as in (v) of Definition \ref{def:relative_verify_K_subcomplete}.
Since $\theta$ and $a'$ verify the $\vec{K}$-subcompleteness of $\mbb{P}_\gamma$ relative to $\beta$,
we can take $p' \leq \mbb{P}_\gamma$ such that $p' \rst \beta = p$ and
$p'$ forces the existence of $\sigma ' : \bar{M} \prec M$ with
$\sigma ' ( \langle \vec{\bar{\mbb{P}}} , \bar{b} \rangle ) = \langle \vec{\mbb{P}} , b \rangle$,
$a \in \ran ( \sigma ' )$
and $\sigma ' [ \bar{G}_{\bar{\gamma}} ] \subseteq \dot{G}_\gamma$.

Let $\bar{\mbb{P}}_{\bar{\gamma} , \bar{\delta}}$ be the evaluation
of $\dot{\bar{\mbb{P}}}_{\bar{\gamma} , \bar{\delta}}$ by $\bar{G}_{\bar{\gamma}}$,
and let $\bar{H}_{\bar{\gamma}} := \bar{G}_{\bar{\delta}} \rst [ \bar{\gamma} , \bar{\delta} )$.
Note that $\bar{H}_{\bar{\gamma}}$ is
a $\bar{\mbb{P}}_{\bar{\gamma} , \bar{\delta}}$-generic filter over $\bar{M}[ \bar{G}_{\bar{\gamma}} ]$.
Also, let $\dot{H}_\gamma$ be a $\mbb{P}_\gamma$-name
of the canonical name for a $\dot{\mbb{P}}_{\gamma , \delta}$-generic filter.
We claim the following.

\begin{claim}
There is $p^* \in \mbb{P}_\delta$ such that $p^* \rst \gamma = p'$ and
\[
p^* \rst \gamma \Vdash_\gamma \lchon\, p^* \rst [ \gamma , \delta ) \Vdash_{\gamma , \delta} \, \Phi \,\rchon \, ,
\]
where $\Phi$ is the statement that
there is $\tau : \bar{M}[ \bar{G}_{\bar{\gamma}} ] \prec M[ \dot{G}_\gamma ]$ with
$\tau ( \langle \vec{\bar{\mbb{P}}} , \bar{b} , \bar{G}_{\bar{\gamma}} \rangle ) =
\langle \vec{\mbb{P}} , b , \dot{G}_\gamma \rangle$
and $\tau [ \bar{H}_{\bar{\gamma}} ] \subseteq \dot{H}_\gamma$.
\end{claim}

\noindent
\emph{Proof of Claim 2}.
By (i) of Definition \ref{def:nice_it}, it suffices to prove that
\[
p' \Vdash_\gamma \lchon\,
\exists q \in \dot{\mbb{P}}_{\gamma , \delta} \; ( \, q \Vdash_{\gamma , \delta} \, \Phi \, )
\,\rchon \, .
\]
Suppose $G_\gamma$ is a $\mbb{P}_\gamma$-generic filter over $V$ with $p' \in G_\gamma$.
Let $\mbb{P}_{\gamma , \delta} := ( \dot{\mbb{P}}_{\gamma , \delta} )^{G_\gamma}$.
Working in $V[ G_\gamma ]$, we find $q \in \mbb{P}_{\gamma , \delta}$ which forces $\Phi$.

Let $a'' := ( \dot{a} '')^{G_\gamma}$.
Recall that $\theta$ and $a''$ verify the $\vec{K}$-subcompleteness of $\mbb{P}_{\gamma , \delta}$.
We want to use this to find $q$. For this, we make some preliminaries.

Since $p' \in G_\gamma$, there is $\sigma ' : \bar{M} \prec M$ with $a \in \ran ( \sigma ' )$,
$\sigma ' ( \langle \vec{\bar{\mbb{P}}} , \bar{b} \rangle ) = \langle \vec{\mbb{P}} , b \rangle$
and $\sigma ' [ \bar{G}_{\bar{\gamma}} ] \subseteq G_\gamma$.
Then, $\sigma '$ can be naturally extended
to $\sigma '' : \bar{M} [ \bar{G}_{\bar{\gamma}} ] \prec M[ G_\gamma ]$
with $\sigma '' ( \bar{G}_{\bar{\gamma}} ) = G_\gamma$.
Note that
\begin{renumerate}
\item $a'' \in \ran ( \sigma '' )$,
and $\sigma '' ( \bar{\mbb{P}}_{\bar{\gamma} , \bar{\delta}} ) = \mbb{P}_{\gamma , \delta}$.
\end{renumerate}

Let $B := ( \{ 0 \} \times A ) \cup ( \{ 1 \} \times G_\gamma )$ and $N := L^B_\chi$.
Then it is easy to check that $M[ G_\gamma ] = \langle L_\chi [A][ G_\gamma ] , {\in} , A \cap L_\chi [A] \rangle$
and $N = \langle L_\chi [B] , {\in} , B \cap L_\chi [B] \rangle$ are equivalent
in the sense that $L_\chi [A][ G_\gamma ] = L_\chi [B]$,
$A \cap L_\chi [A]$ is definable in $N$,
and $B \cap L_\chi [B]$ is definable in $M[ G_\gamma ]$.
Note also that
\begin{renumerate}
\addtocounter{enumi}{1}
\item $\mcal{H}_\theta^{V[ G_\gamma ]} \subseteq L^B_\chi \models \mrm{ZFC}^-$.
\end{renumerate}

By the elementarity of $\sigma '$, $\bar{M} = L^{\bar{A}}_{\bar{\chi}}$
for some set $\bar{A}$ and some ordinal $\bar{\chi}$.
Let $\bar{B} := ( \{ 0 \} \times \bar{A} ) \cup ( \{ 1 \} \times \bar{G}_{\bar{\gamma}} )$ and
$\bar{N} := L^{\bar{B}}_{\bar{\chi}}$. Then $\bar{M}[ \bar{G}_{\bar{\gamma}} ]$
and $\bar{N}$ are equivalent in the same sense as above.
Then, we have that
\begin{renumerate}
\addtocounter{enumi}{2}
\item $\sigma '' : \bar{N} \prec N$.
\end{renumerate}

Next, we note that
\begin{renumerate}
\addtocounter{enumi}{3}
\item $\bar{N}$ is a $\vec{K}$-good model,
\item $\bar{H}_{\bar{\gamma}}$ is a $\bar{\mbb{P}}_{\bar{\gamma} , \bar{\delta}}$-generic filter over $\bar{N}$
with $\bar{H}_{\bar{\gamma}} \in K_{\omega_1^{\bar{N}}}$.
\end{renumerate}
We only check that $\bar{N} , \bar{H}_{\bar{\gamma}} \in K_{\omega_1^{\bar{N}}}$,
and $\bar{N}$ is countable in $K_{\omega_1^{\bar{N}}}$. The other properties are easily checked.
Note that
 $\bar{N} , \bar{H}_{\bar{\gamma}} \in K_{\omega_1^{\bar{M}}}$ and
$\bar{N}$ is countable in $K_{\omega_1^{\bar{M}}}$,
since $\bar{M} , \bar{G}_{\bar{\gamma}} , \bar{G}_{\bar{\delta}} \in K_{\omega_1^{\bar{M}}}$
and $\bar{M}$ is countable in $K_{\omega_1^{\bar{M}}}$.
So it suffices to check that $\omega_1^{\bar{N}} = \omega_1^{\bar{M}}$.
For this, note that $\mbb{P}_\gamma$ preserves the adequateness of $\vec{K}$
by the definition of an iteration of $\vec{K}$-subcomplete forcings.
So $\mbb{P}_\gamma$ preserves $\omega_1$.
Then, by the elementarity of $\sigma '$, we have $\omega_1^{\bar{N}} = \omega_1^{\bar{M}}$.

Since $\theta$ and $a''$ verify the $\vec{K}$-subcompleteness of $\mbb{P}_{\gamma , \delta}$,
by (i)--(v) above, there is $q \in \mbb{P}_{\gamma , \delta}$ which forces that there is
$\tau : \bar{N} \prec N$ with
$\tau ( \langle \vec{\bar{\mbb{P}}} , \bar{b} \rangle )
= \langle \vec{\mbb{P}} , b \rangle$
and $\tau [ \bar{H}_{\bar{\gamma}} ] \subseteq \dot{H}_\gamma$.
Then $q$ is as desired.
\hfill $\square$ (Claim 2)

\bigskip

Let $p^*$ be as in Claim 2. We show that $p^*$ is as desired.
First, note that $p^* \rst \beta = p' \rst \beta = p$.
Suppose $G_\delta$ is a $\mbb{P}_\delta$-generic filter over $V$ with $p^* \in G_\delta$.
In $V[ G_\delta ]$, we find $\sigma^* : \bar{M} \prec M$
with $\sigma^* ( \langle \vec{\bar{\mbb{P}}} , \bar{b} \rangle ) = \langle \vec{\mbb{P}} , b \rangle$
and $\sigma^* [ \bar{G}_{\bar{\delta}} ] \subseteq \dot{G}_\delta$.

Note that $G_\gamma := G_\delta \rst \gamma$ is $\mbb{P}_\gamma$-generic filter over $V$,
and $H_\gamma := G_\delta \rst [ \gamma , \delta )$ is
a $( \dot{\mbb{P}}_{\gamma , \delta} )^{G_\gamma}$-generic filter over $V[ G_\gamma ]$.
Moreover, $p^* \rst \gamma \in G_\gamma$, and $p^* \rst [ \gamma , \delta ) \in H_\gamma$.
Then, by the choice of $p^*$, in $V[ G_\delta ] = V[ G_\gamma ][ H_\gamma ]$,
there is $\tau : \bar{M}[ \bar{G}_{\bar{\gamma}} ] \prec M[ G_\gamma ]$ with
$\tau  ( \langle \vec{\bar{\mbb{P}}} , \bar{b} , \bar{G}_{\bar{\gamma}} \rangle )
= \langle \vec{\mbb{P}} , b , G_\gamma \rangle$
and $\tau [ \bar{H}_{\bar{\gamma}} ] \subseteq H_\gamma$.
Let $\sigma^* := \tau \rst \bar{M}$.

Clearly, $\sigma^* : \bar{M} \prec M$ and
$\sigma^* ( \langle \vec{\bar{\mbb{P}}} , \bar{b} \rangle ) = \langle \vec{\mbb{P}} , b \rangle$.
Note that $\tau [ \bar{G}_{\bar{\gamma}} ] \subseteq G_\gamma$ and
$\tau [ \bar{H}_{\bar{\gamma}} ] \subseteq H_\gamma$.
Note also that $\bar{G}_{\bar{\delta}} = \bar{G}_{\bar{\gamma}} * \bar{H}_{\bar{\gamma}}$
and $G_\delta = G_\gamma * H_\gamma$.
So $\sigma^* [ \bar{G}_{\bar{\delta}} ] \subseteq G_\delta$.
Thus $\sigma^*$ is as desired.
\hfill $\square$ (Case 2)

\bigskip

This completes the proof of Proposition \ref{prop:K_subcomplete_nice_it}.
\end{proof}


\section{$\vec{K}$-$\msf{SCFA}$} \label{sec:K_scfa}

In this section, we study the forcing axiom for $\vec{K}$-subcomplete forcing notions.

\begin{definition} \label{def:K_scfa}
Suppose $\vec{K}$ is an adequate model sequence.
The $\vec{K}$-Subcomplete Forcing Axiom, $\vec{K}$-$\msf{SCFA}$, is the following assertion:
\begin{quote}
For any $\vec{K}$-subcomplete forcing notion $\mbb{P}$ and any family $\mcal{D}$ of dense subsets
of $\mbb{P}$ with $| \mcal{D} | \leq \omega_1$,
there is a filter $g$ on $\mbb{P}$ with $g \cap D \neq \emptyset$
for any $D \in \mcal{D}$.
\end{quote}
\end{definition}

We first prove the consistency of $\vec K$-$\msf{SCFA}$ for some $\diamondsuit$-model sequence $\vec K$.
Using Lemma \ref{lem:K_subcomplete_adequate} and Theorem \ref{thm:K_subcomplete_nice_it},
this can be proved by a standard argument.

\begin{theorem} \label{thm:con_K_scfa}
Assume there is a supercompact cardinal.
Then there is a forcing extension in which $\vec{K}$-$\msf{SCFA}$ holds for some
$\dmnd$-model sequence $\vec{K}$.
\end{theorem}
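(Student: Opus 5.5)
We follow the standard Baumgartner-style consistency argument for forcing axioms, using a Laver function and nice iterations, together with a preliminary step that makes $\dmnd_{\omega_1}$ true. Let $\kappa$ be supercompact. First, we force with $\mrm{Add}(\omega_1,1)$, the $\sigma$-closed forcing adding a Cohen subset of $\omega_1$. This forces $\dmnd_{\omega_1}$ and keeps $\kappa$ supercompact, since the forcing is small. Alternatively, we could use the supercompactness-preserving preparation in which $\dmnd$ is obtained by a small forcing. In the resulting model we fix a $\dmnd_{\omega_1}$-sequence and, by Lemma \ref{lem:model_seq_basic} (1), a $\dmnd$-model sequence $\vec{K} = \langle K_\xi \mid \xi < \omega_1 \rangle$. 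We then fix a Laver function $f : \kappa \to V_\kappa$.

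Next we build a nice iteration $\langle \mbb{P}_\alpha \mid \alpha \leq \kappa \rangle$ recursively, as permitted by \cite[Lemma 2.9]{Miyamoto_nice}. At stage $\alpha$, if $f(\alpha)$ is a $\mbb{P}_\alpha$-name for a $\vec{K}$-subcomplete forcing notion, we let $\dot{\mbb{P}}_{\alpha,\alpha+1}$ be forcing equivalent to $f(\alpha)$; otherwise we use the trivial forcing. To ensure that this is an iteration of $\vec{K}$-subcomplete forcings in the sense of Definition \ref{def:it_K_subcomplete}, we need $\vec{K}$ to remain an adequate model sequence at every stage. We prove this by induction on $\alpha$. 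Suppose $\Vdash_\alpha$ ``$\vec K$ is adequate'' for all earlier stages. Then $\langle \mbb{P}_{\alpha'} \mid \alpha' \leq \alpha\rangle$ is a nice iteration of $\vec K$-subcomplete forcings, so by Theorem \ref{thm:K_subcomplete_nice_it} the forcing $\mbb{P}_\alpha$ is $\vec K$-subcomplete. Lemma \ref{lem:K_subcomplete_adequate} then gives that $\vec K$ is still a $\dmnd$-model sequence after forcing with $\mbb{P}_\alpha$. Being a $\dmnd$-model sequence is the stronger property we need at the final stage. Lemma \ref{lem:K_subcomplete_equivalent} handles the replacement of $f(\alpha)$ by a forcing-equivalent poset. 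By Theorem \ref{thm:K_subcomplete_nice_it}, $\mbb{P}_\kappa$ is $\vec K$-subcomplete, so $\vec K$ is a $\dmnd$-model sequence in $V[G_\kappa]$. By Lemma \ref{lem:K_subcomplete_reals}, $\omega_1$ is preserved. We also need $\mbb{P}_\kappa$ to be $\kappa$-c.c. and to force $\kappa=\omega_2$. The chain condition follows from the usual argument for nice iterations of small posets with a stationary set of inaccessible stages where direct limits are taken, as in Miyamoto and Fuchs--Switzer. The equality $\kappa=\omega_2$ holds because collapses $\mrm{Col}(\omega_1,\lambda)$ are $\sigma$-closed and hence $\vec K$-subcomplete, and they are booked by $f$ cofinally often.

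Finally, in $V[G_\kappa]$, let $\mbb{Q}$ be $\vec K$-subcomplete and let $\mcal{D}$ be a family of $\omega_1$ dense sets. We take $\lambda$ large and a $\lambda$-supercompact embedding $j : V \to N$ with $j(f)(\kappa) = \dot{\mbb{Q}}$. In $N$, $j(\mbb{P})_\kappa = \mbb{P}_\kappa$, and $\dot{\mbb{Q}}$ is $\vec K$-subcomplete in $N[G_\kappa]$. This requires the $\vec K$-subcompleteness of $\mbb{Q}$ to be witnessed inside $\mcal{H}_\lambda$ and hence to be absolute to $N[G_\kappa]$, using closure of $N$ under $\lambda$-sequences and $j(\vec K)=\vec K$. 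So stage $\kappa$ of $j(\mbb{P})$ forces with $\mbb{Q}$. Let $H$ be $\mbb{Q}$-generic, and force further with the tail to obtain $j(G_\kappa)$. We lift $j$ to $j : V[G_\kappa] \to N[j(G_\kappa)]$; this is possible because conditions have support bounded below $\kappa$ in the relevant sense, so $j[G_\kappa]\subseteq j(G_\kappa)$. Then $j[H]$ generates a filter meeting $j(D)$ for each $D\in\mcal D$, and $j[H]\in N[j(G_\kappa)]$ by closure. Elementarity then yields the required filter in $V[G_\kappa]$. We expect the main obstacle to be this lifting step for nice iterations, namely checking $j[G_\kappa]\subseteq j(G_\kappa)$ and the absoluteness of $\vec K$-subcompleteness. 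We would follow the argument of Fact \ref{fact:infty_scfa} from Fuchs--Switzer verbatim, replacing $\infty$-subcompleteness by $\vec K$-subcompleteness throughout.
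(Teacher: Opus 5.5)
Your proposal is correct and follows essentially the same route as the paper. Both force $\dmnd_{\omega_1}$ with ${}^{<\omega_1}2$, fix a $\dmnd$-model sequence $\vec{K}$, and run a Laver-guided nice iteration, using Lemma~\ref{lem:K_subcomplete_adequate} and Theorem~\ref{thm:K_subcomplete_nice_it} inductively to keep $\vec{K}$ a $\dmnd$-model sequence, followed by the standard lifting argument. The only differences are cosmetic: the paper writes ``$1_\alpha$ forces that $\vec{K}$ is a $\dmnd$-model sequence'' into the bookkeeping clause, whereas you establish it by induction, and you spell out the chain-condition, absoluteness and lifting details that the paper omits as standard.
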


\begin{proof}
By replacing $V$ with its forcing extension by ${}^{< \omega_1} 2$,
we may assume that $\dmnd_{\omega_1}$ holds in $V$.
Take a $\dmnd$-model sequence $\vec{K}$ in $V$.

In $V$, let $\kappa$ be a supercompact cardinal, and take a Laver function $F : \kappa \to V_\kappa$.
Then, we can construct a nice iteration $\langle \mbb{P}_\alpha \mid \alpha \leq \kappa \rangle$ so that
we have the following for all $\alpha < \kappa$.
\begin{itemize}
\item If $1_\alpha$ forces that $\vec{K}$ is a $\dmnd$-model sequence,
and $F( \alpha )$ is a $\mbb{P}_\alpha$-name for a $\vec{K}$-subcomplete forcing axiom,
then $1_\alpha$ forces that $\dot{\mbb{P}}_{\alpha , \alpha +1}$ is forcing equivalent to $F( \alpha )$.
\item Otherwise, $1_\alpha$ forces that
$\dot{\mbb{P}}_{\alpha , \alpha + 1}$ is a trivial forcing notion.
\end{itemize}

Using Lemma \ref{lem:K_subcomplete_adequate} and Theorem \ref{thm:K_subcomplete_nice_it},
by induction on $\alpha \leq \kappa$, we can prove that $\mbb{P}_\alpha$ is $\vec{K}$-subcomplete,
and $\Vdash_\alpha \lchon\, \mbox{$\vec{K}$ is a $\dmnd$-model sequence} \,\rchon$.

Let $G_\kappa$ be a $\mbb{P}_\kappa$-generic filter over $V$.
Then $\vec{K}$ is a $\dmnd$-model sequence in $V[ G_\kappa ]$.
Moreover, by the standard argument used in the consistency proof of $\msf{PFA}$,
we can prove that $\vec{K}$-$\msf{SCFA}$ holds in $V[ G_\kappa ]$.
The details are standard and are omitted.
\end{proof}

We turn our attention to consequences of $\vec{K}$-$\msf{SCFA}$.
First, recall that all subcomplete forcing notions are $\vec{K}$-subcomplete.
So we have the following.

\begin{prop} \label{prop:K_scfa_scfa}
Suppose $\vec{K}$-$\msf{SCFA}$ holds for some adequate model sequence $\vec{K}$.
Then $\msf{SCFA}$ holds.
\end{prop}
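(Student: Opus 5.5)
The plan is to reduce the statement to the inclusion of forcing classes: every subcomplete forcing notion is $\vec{K}$-subcomplete. Once this is known, the proposition is immediate. Given a subcomplete $\mbb{P}$ and a family $\mcal{D}$ of at most $\omega_1$ dense subsets of $\mbb{P}$, $\mbb{P}$ is $\vec{K}$-subcomplete, so $\vec{K}$-$\msf{SCFA}$ provides a filter $g$ on $\mbb{P}$ meeting every $D \in \mcal{D}$. This is exactly the instance of $\msf{SCFA}$ for $\mbb{P}$ and $\mcal{D}$.

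To justify the inclusion, I will go through $\infty$-subcompleteness. First, every subcomplete forcing notion in Jensen's original sense is $\infty$-subcomplete. This was noted in the preliminaries: the original definition only adds the extra requirement that $\sigma^*$ resemble $\sigma$. Second, by remark (1) after Definition \ref{def:K_subcomplete}, every $\infty$-subcomplete forcing is $\vec{K}$-subcomplete for any adequate model sequence $\vec{K}$.

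To check remark (1), I compare the two definitions directly. Let $\theta$ and $a$ verify the $\infty$-subcompleteness of $\mbb{P}$. I claim that the same $\theta$ and $a$ verify its $\vec{K}$-subcompleteness. Take data $A,\chi,\bar{M},\bar{\mbb{P}},\bar{b},\sigma,b,\bar{G}$ satisfying (i)--(iv) of Definition \ref{def:K_subcomplete}. A $\vec{K}$-good model $\bar{M}$ is transitive, full and a model of $\mrm{ZFC}^-$. It is also countable in $V$, because it is countable in the transitive model $K_{\omega_1^{\bar{M}}}$. Moreover, $\bar{G}$ is $\bar{\mbb{P}}$-generic over $\bar{M}$. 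So the data satisfy (i)--(iv) of Definition \ref{def:infty_subcomplete}, and the condition $p^*$ obtained there has the required property verbatim. The hypotheses of Definition \ref{def:K_subcomplete} are thus a special case of those of Definition \ref{def:infty_subcomplete}, while the conclusions are identical.

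There is no real obstacle here. The only point needing care is the first step, that Jensen's subcompleteness implies $\infty$-subcompleteness. Beyond the dropped similarity clause, the two formulations differ only in presentation: one uses a parameter $a$, and one uses models $L^A_\chi$ instead of $\mcal{H}_\theta$-type models. For this equivalence I will cite Fuchs--Switzer \cite{FS} and \cite[Chapter 3, Lemma 2.5]{Jensen4}, as the preliminaries already do.
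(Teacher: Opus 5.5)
Your proposal is correct and is the same argument as the paper's. The paper derives the proposition in one line from the chain subcomplete $\Rightarrow$ $\infty$-subcomplete $\Rightarrow$ $\vec{K}$-subcomplete (remark (1) after the definition of $\vec{K}$-subcompleteness), and your direct check of that remark, namely that a $\vec{K}$-good model is countable, transitive and full so the hypotheses are a special case with identical conclusion, fills in the detail the paper leaves to the reader.
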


Next, we prove that $\vec{K}$-$\msf{SCFA}$ for a $\dmnd$-model sequence $\vec{K}$
implies $\dmnd^+_{\omega_1}$.

\begin{prop} \label{prop:K_scfa_dmnd+}
Suppose $\vec{K}$ is an adequate model sequence,
and $\vec{K}$-$\msf{SCFA}$ holds.
Then $\vec{K}$ is strongly adequate.
If $\vec{K}$ is a $\dmnd$-model sequence in addition,
then $\vec{K}$ is a $\dmnd_{\omega_1}^+$-sequence.
\end{prop}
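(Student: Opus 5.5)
The plan is to apply $\vec{K}$-$\msf{SCFA}$ to a club-shooting forcing. Fix $B \subseteq \omega_1$ and let $\mbb{C}_{\vec{K},B}$ consist of the closed countable sets $c \subseteq \omega_1$ that have a maximum and satisfy
\[
B \cap \xi \in K_\xi \ \text{ and } \ c \cap \xi \in K_\xi \quad \text{for all } \xi \in c .
\]
The order is end-extension. Suppose we know that $\mbb{C}_{\vec{K},B}$ is $\vec{K}$-subcomplete and that each $D_\alpha := \{ c \mid \max c \geq \alpha \}$, for $\alpha < \omega_1$, is dense. Then $\vec{K}$-$\msf{SCFA}$ gives a filter $g$ meeting every $D_\alpha$. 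Put $C := \bigcup g$.

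\textbf{The club $C$.} $C$ is unbounded because $g$ meets every $D_\alpha$. It is closed because $g$ consists of pairwise end-extending closed sets, so any bounded piece of $C$ lies inside a single $c \in g$. For $\xi \in C$, pick $c \in g$ with $\xi \in c$. Since the conditions in $g$ end-extend one another, $C \cap \xi = c \cap \xi$, and hence $B \cap \xi, C \cap \xi \in K_\xi$.

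\textbf{The two conclusions.} The club $C$ shows that $\{ \xi \mid B \cap \xi \in K_\xi \}$ contains a club, so $\vec{K}$ is strongly adequate. If $\vec{K}$ is a $\dmnd$-model sequence, each $K_\xi$ is countable. Then $\mcal{B}_\xi := \mcal{P}(\xi) \cap K_\xi$ is countable, and $C$ witnesses the $\dmnd^+_{\omega_1}$ requirement for $B$.

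\textbf{Density.} Given a condition $c$ and $\alpha < \omega_1$, we need $\eta > \max\{\alpha, \max c\}$ with $B \cap \eta \in K_\eta$ and $c \in K_\eta$; then $c \cup \{\eta\}$ is a condition in $D_\alpha$. Code $c$ into $B$: let $B' \subseteq \omega_1$ be, via a pairing function, the join of $B$ with a subset of $\omega$ coding $c$. For club many $\eta$, both $B \cap \eta$ and $c$ are computable from $B' \cap \eta$ in any transitive $\mrm{ZFC}^-$ model containing $B' \cap \eta$. By adequacy there are stationary many $\eta$ with $B' \cap \eta \in K_\eta$, so a suitable $\eta$ exists. (Alternatively, density follows from Lemma~\ref{lem:K_subcomplete_adequate}-style arguments once subcompleteness is known, but the direct argument is simpler.)

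\textbf{$\vec{K}$-subcompleteness (main step).} Take $\theta$ large and $a := B$. Let $\bar{M}$, $\sigma$ and $\bar{G} \in K_\xi$ be as in Definition~\ref{def:K_subcomplete}, where $\xi := \omega_1^{\bar{M}}$ and $\bar{\mbb{P}} = \sigma^{-1}(\mbb{C}_{\vec{K},B})$.
\begin{itemize}
\item Conditions are countable subsets of $\omega_1$, so $\sigma$ is the identity on $\bar{\mbb{P}}$.
\item $\bar{\mbb{P}}$ is exactly the set of conditions of $\mbb{C}_{\vec{K},B}$ with maximum below $\xi$, and $\sigma^{-1}(B) = B \cap \xi$.
\end{itemize}
Let $\bar{c} := \bigcup \bar{G}$. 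By genericity over $\bar{M}$ and the density of the $D_\alpha$ in $\bar{M}$, $\bar{c}$ is closed and unbounded in $\xi$. Set $p^* := \bar{c} \cup \{\xi\}$. This is a condition:
\begin{itemize}
\item $B \cap \xi \in \bar{M} \in K_\xi$, and $K_\xi$ is transitive.
\item $p^* \cap \xi = \bar{c}$ is computable from $\bar{G} \in K_\xi$, so it lies in $K_\xi$.
\item Points below $\xi$ are handled by the members of $\bar{G}$ that contain them.
\end{itemize}
Then $p^*$ extends every element of $\bar{G}$, so $\sigma^* := \sigma$ works: $\sigma[\bar{G}] = \bar{G} \subseteq \dot{G}$ is forced by $p^*$. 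This shows that $\mbb{C}_{\vec{K},B}$ is $\vec{K}$-subcomplete.

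The delicate point is exactly here: we need $\bar{G} \in K_\xi$, which the definition of $\vec{K}$-subcompleteness supplies, in order to put $p^* \cap \xi$ into $K_\xi$. We also need to check carefully that $\bar{\mbb{P}}$ coincides with the restricted forcing, so that $\sigma$ fixes its conditions.
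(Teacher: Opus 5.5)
Your proof is correct and is essentially the paper's argument. You use the same forcing $\mbb{C}_{\vec{K},B}$ and the same density lemma (your coding of $c$ into $B'$ just spells out the paper's one-line appeal to adequacy), the same verification of $\vec{K}$-subcompleteness with $a = B$, $\sigma^* = \sigma$ and $p^* = \bigcup \bar{G} \cup \{\xi\}$, and the same application of $\vec{K}$-$\msf{SCFA}$ to the $\omega_1$ dense sets $D_\alpha$. One small inaccuracy: $\bar{\mbb{P}}$ is in general only $\mbb{C}_{\vec{K},B} \cap \bar{M}$, not all conditions with maximum below $\xi$, since a countable $\bar{M}$ need not contain them all; however, your argument only uses $\bar{\mbb{P}} \subseteq \mbb{C}_{\vec{K},B}$ with $\sigma$ fixing it pointwise, and that is true.
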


For this, we use the following forcing notion.

\begin{definition} \label{def:C_KB}
For an adequate model sequence $\vec{K} = \langle K_\xi \mid \xi < \omega_1 \rangle$
and $B \subseteq \omega_1$, let $\mbb{C}_{\vec{K} , B}$ be the following forcing notion.
\begin{renumerate}
\item $\mbb{C}_{\vec{K} , B}$ consists of all closed bounded $p \subseteq \omega_1$
such that $B \cap \xi , p \cap \xi \in K_\xi$ for all $\xi \in p$.
\item $p \leq q$ in $\mbb{C}_{\vec{K} , B}$ if $p$ is an end-extension of $q$.
\end{renumerate}
\end{definition}

In the following two lemmas, we observe basic properties of $\mbb{C}_{\vec{K} , B}$.

\begin{lemma} \label{lem:C_KB_unbounded}
Suppose $\vec{K} = \langle K_\xi \mid \xi < \omega_1 \rangle$ is an adequate model sequence,
and $B \subseteq \omega_1$. Then, $D_\xi := \{ p \in \mbb{C}_{\vec{K} , B} \mid \max (p) \geq \xi \}$
is dense in $\mbb{C}_{\vec{K} , B}$ for any $\xi < \omega_1$.
\end{lemma}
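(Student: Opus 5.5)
Fix $p \in \mbb{C}_{\vec{K},B}$ and $\xi < \omega_1$. I plan to end-extend $p$ by a single new point $\eta$ with $\eta > \max(p)$ and $\eta \geq \xi$, i.e.\ set $p' := p \cup \{\eta\}$. Then $p'$ is closed and bounded, and it end-extends $p$ since $\eta > \max(p)$. For points $\zeta \in p$ the requirement of (i) in Definition \ref{def:C_KB} is unaffected, because $p' \cap \zeta = p \cap \zeta$ and $p \in \mbb{C}_{\vec{K},B}$. So the only thing to check is that $B \cap \eta \in K_\eta$ and $p' \cap \eta = p \in K_\eta$. (If $p$ is empty, the same argument applies with $\max(p)$ replaced by $0$.)

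To find such an $\eta$, I would code $B$ and $p$ together into a single subset of $\omega_1$. Since $p$ is a countable set of countable ordinals, it is bounded below some $\delta<\omega_1$. Take $B' := \{2\zeta \mid \zeta \in B\} \cup \{2\zeta+1 \mid \zeta \in p\}$, using ordinal arithmetic so that $\zeta \mapsto 2\zeta$ and $\zeta \mapsto 2\zeta+1$ cover even and odd ordinals. By the adequateness of $\vec{K}$ (Definition \ref{def:model_seq}(ii)), the set $\{\eta < \omega_1 \mid B' \cap \eta \in K_\eta\}$ is stationary. Intersect it with the club of limit ordinals $\eta$ closed under $\zeta \mapsto 2\zeta+1$, and with $(\max\{\xi,\delta\},\omega_1)$. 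Choose $\eta$ in this intersection.

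For such $\eta$, the sets $B \cap \eta$ and $p \cap \eta = p$ can be decoded from $B' \cap \eta$, because $\eta$ is closed under the coding maps. The decoding is $\Delta_0$-definable with the parameter $B'\cap\eta$, and $K_\eta$ is a transitive model of $\mrm{ZFC}^-$, so both $B\cap\eta$ and $p$ belong to $K_\eta$. Therefore $p' \in \mbb{C}_{\vec{K},B}$, $p' \leq p$, and $\max(p') = \eta \geq \xi$, which shows $D_\xi$ is dense.

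The only genuine step is the joint coding of $B$ and $p$, which is needed because adequateness guarantees guessing for one subset of $\omega_1$ at a time. Alternatively, one could apply the normal filter of Lemma \ref{lem:model_seq_basic}(2) to intersect the guessing sets for $B$ and for $p$, but the direct coding above avoids this.
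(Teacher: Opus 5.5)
Your proposal is correct and is essentially the paper's proof: end-extend $p$ by a single point $\eta$ above both $\max(p)$ and $\xi$, chosen via adequateness so that $B\cap\eta$ and $p$ both lie in $K_\eta$. The paper simply asserts that such an $\eta$ exists ``by the adequateness of $\vec{K}$''. Your joint coding of $B$ and $p$ into one set $B'\subseteq\omega_1$ supplies the detail left implicit there. Strictly, the decoding inside $K_\eta$ relies on ordinal arithmetic being absolute for transitive models of $\mathrm{ZFC}^-$, not on literal $\Delta_0$-definability, but this is harmless.
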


\begin{proof}
Suppose $p \in \mbb{C}_{\vec{K} , B}$ and $\xi < \omega_1$.
We must find $q \leq p$ with $q \in D_\xi$.
By the adequateness of $\vec{K}$, there is $\zeta < \omega_1$
such that $\xi , \max (p) < \zeta$ and $B \cap \zeta , p \in K_\zeta$.
Let $q := p \cup \{ \zeta \}$. Then, $q \in \mbb{C}_{\vec{K} , B}$ by the choice of $\zeta$
and the fact that $p \in \mbb{C}_{\vec{K} , B}$. Moreover $q \leq p$ clearly,
and $q \in D_\xi$ since $\max (q) = \zeta > \xi$.
\end{proof}

\begin{lemma} \label{lem:C_KB_K_subcomplete} 
Suppose $\vec{K} = \langle K_\xi \mid \xi < \omega_1 \rangle$ is an adequate model sequence,
and $B \subseteq \omega_1$. Then $\mbb{C}_{\vec{K} , B}$ is $\vec{K}$-subcomplete.
\end{lemma}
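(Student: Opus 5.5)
We will show that any regular $\theta > 2^{\omega_1}$ together with $a := \{ \vec{K} , B \}$ verifies the $\vec{K}$-subcompleteness of $\mbb{P} := \mbb{C}_{\vec{K} , B}$. In fact, in the situation of Definition \ref{def:K_subcomplete} we will find $p^*$ so that the \emph{original} $\sigma$ already works as $\sigma^*$, i.e.\ $p^* \Vdash \sigma [ \bar{G} ] \subseteq \dot{G}$. The plan is the usual argument for club shooting with countable conditions. The specific feature of $\vec{K}$-subcompleteness that makes the top point admissible is that $\bar{G}$ lies in $K_{\omega_1^{\bar{M}}}$.

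Fix $A, \chi, \bar{M}, \bar{\mbb{P}}, \bar{b}, \sigma, b, \bar{G}$ as in (i)--(iv), let $M := L^A_\chi$ and $\xi := \omega_1^{\bar{M}}$, and let $\langle \bar{K}, \bar{B} \rangle := \sigma^{-1} ( \langle \vec{K} , B \rangle )$. Since $\sigma$ fixes countable ordinals below $\xi$ and $\sigma ( \xi ) = \omega_1$, we get $\bar{B} = B \cap \xi$ and $\bar{K}_\eta = K_\eta$ for $\eta < \xi$. Moreover $\bar{\mbb{P}} = \mbb{C}_{\bar{K} , \bar{B}}^{\bar{M}}$, and since the defining condition $B \cap \eta , p \cap \eta \in K_\eta$ for $\eta \in p$ is absolute, $\bar{\mbb{P}}$ is exactly the set of conditions $p \in \mbb{P}$ with $\max (p) < \xi$. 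Also $\sigma ( p ) = p$ for every $p \in \bar{\mbb{P}}$, as these are countable subsets of $\xi$.

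Let $c := \bigcup \bar{G}$. By genericity over $\bar{M}$ and Lemma \ref{lem:C_KB_unbounded} applied inside $\bar{M}$ (note $\vec{K}$'s adequateness reflects to $\bar{K}$ in $\bar{M}$ by elementarity), $c$ is a club in $\xi$, and its initial segments $c \cap \eta$ for $\eta \in c$ belong to $\bar{G}$. Set $p^* := c \cup \{ \xi \}$. We then check that $p^* \in \mbb{P}$. For $\eta \in c$, the required condition holds because $c \cap (\eta+1) \in \bar{G} \subseteq \mbb{P}$. At the top point $\eta = \xi$ we need $B \cap \xi \in K_\xi$ and $c \in K_\xi$. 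The second holds since $\bar{G} \in K_\xi$ by (iv). For the first, $B \cap \xi = \bar{B} \in \bar{M}$, and $\bar{M} \in K_\xi$ with $K_\xi$ transitive, so $B \cap \xi \in K_\xi$. This is exactly where the assumption that $\bar{M}$ is $\vec{K}$-good and $\bar{G} \in K_{\omega_1^{\bar{M}}}$ is used, and I expect it to be the only nontrivial point.

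Finally, $p^*$ end-extends every $q \in \bar{G}$, because the elements of $\bar{G}$ are pairwise compatible initial segments of $c$. Hence $p^* \Vdash \bar{G} \subseteq \dot{G}$, and with $\sigma^* := \sigma$ (which lies in $V$) we get $\sigma^* [ \bar{G} ] = \bar{G} \subseteq \dot{G}$ and $\sigma^* ( \langle \bar{\mbb{P}} , \bar{b} \rangle ) = \langle \mbb{P} , b \rangle$, as required. The remaining routine verifications concern the identifications of $\bar{\mbb{P}}$ with an initial part of $\mbb{P}$ and the closedness of $c$. The latter follows from a density argument in $\bar{M}$: any condition can be end-extended to one containing $\sup$ of a given point's predecessors in $c$, and a condition that jumps over a limit point of $c$ would be avoided by genericity.
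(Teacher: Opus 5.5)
Your proof is correct and is essentially the paper's argument. You take $p^* = \bigcup \bar{G} \cup \{ \xi \}$, which is a condition because $B \cap \xi \in \bar{M} \subseteq K_\xi$ and $\bigcup \bar{G} \in K_\xi$ (this is where $\bar{G} \in K_{\omega_1^{\bar{M}}}$ is used), and $\sigma^* := \sigma$ works because $\sigma \rst \bar{\mbb{P}}$ is the identity.

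A few side claims are inaccurate, though none is load-bearing. For a general adequate $\vec{K}$ the $K_\eta$ may be uncountable, so $\sigma^{-1}(K_\eta) \neq K_\eta$ in general. In that case $\bar{\mbb{P}}$ is only $\{ p \in \mbb{P} \cap \bar{M} \mid \max (p) < \xi \}$, not all of $\{ p \in \mbb{P} \mid \max (p) < \xi \}$. Also, $\theta > 2^{\omega_1}$ need not put $\vec{K}$ into $\mcal{H}_\theta$. Your argument only uses $\bar{\mbb{P}} \subseteq \mbb{P}$ with $\sigma \rst \bar{\mbb{P}} = \mathrm{id}$, which follows directly from elementarity, so taking $a := B$ as the paper does suffices.
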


\begin{proof}
Let $\mbb{P} := \mbb{C}_{\vec{K} , B}$.
Let $\theta$ be a sufficiently large regular cardinal, and let $a := B$.
We show that $\theta$ and $a$ verify the $\vec{K}$-subcompleteness of $\mbb{P}$.
Suppose $A$, $\chi$, $\bar{M}$, $\bar{\mbb{P}}$, $\bar{b}$, $\sigma$, $b$ and $\bar{G}$
satisfy (i)--(iv) of Definition \ref{def:K_subcomplete}.
It suffices to find $p^* \in \mbb{P}$ which forces that $\sigma [ \bar{G} ] \subseteq \dot{G}$,
where $\dot{G}$ is the canonical name for a $\mbb{P}$-generic filter.
(Then $p^*$ forces that $\sigma^* := \sigma$ satisfies the requirements in Definition \ref{def:K_subcomplete}.)

Let $\xi := \omega_1^{\bar{M}}$ and $\bar{B} := \sigma^{-1} (B)$.
Note that $\bar{B} = B \cap \xi$ since the critical point of $\sigma$ is $\xi$, and $\sigma ( \xi ) = \omega_1$.
Note also that $\sigma \rst \bar{\mbb{P}}$ is an identity since $\mbb{P} \subseteq \mcal{H}_{\omega_1}$.
In particular, $\sigma [ \bar{G} ] = \bar{G}$.

Let $p' := \bigcup \bar{G}$. Then $p'$ is club in $\xi$ by Lemma \ref{lem:C_KB_unbounded}.
Moreover, $B \cap \eta , p' \cap \eta \in K_\eta$ for all $\eta \in p'$
since $\bar{G} = \sigma [ \bar{G} ] \subseteq \mbb{P}$.
Note also that $B \cap \xi = \bar{B} \in \bar{M} \subseteq K_\xi$ and
$p' \in K_\xi$. Then $p^* := p' \cup \{ \xi \} \in \mbb{P}$, and $p^*$ is a lower bound of
$\bar{G} = \sigma[ \bar{G} ]$. So $p^*$ forces that $\sigma [ \bar{G} ] \subseteq \dot{G}$.
\end{proof}

Now, we prove Proposition \ref{prop:K_scfa_dmnd+}.

\begin{proof}[Proof of Proposition \ref{prop:K_scfa_dmnd+}]
We only prove the latter statement. The proof of the former is the same.
Let $\vec{K} = \langle K_\xi \mid \xi < \omega_1 \rangle$ be a $\dmnd$-model sequence,
and suppose $\vec{K}$-$\msf{SCFA}$ holds. We show that $\vec{K}$ is a $\dmnd^+_{\omega_1}$-sequence.

Take an arbitrary $B \subseteq \omega_1$. We find a club $C \subseteq \omega_1$
with $B \cap \xi , C \cap \xi \in K_\xi$ for all $\xi \in C$.
For each $\xi < \omega_1$, let $D_\xi := \{ p \in \mbb{C}_{\vec{K} , B} \mid \max (p) \geq \xi \}$.
By Lemma \ref{lem:C_KB_unbounded}, each $D_\xi$ is dense in $\mbb{C}_{\vec{K} , B}$.
By $\vec{K}$-$\msf{SCFA}$,
we can take a filter $g$ on $\mbb{C}_{\vec{K} , B}$ with $g \cap D_\xi \neq \emptyset$
for any $\xi < \omega_1$.
Let $C := \bigcup g$.

Then $C$ is a club subset of $\omega_1$.
Moreover $B \cap \xi , C \cap \xi \in K_\xi$ for all $\xi \in C$ since $g \subseteq \mbb{C}_{\vec{K} , B}$.
So $C$ is as desired.
\end{proof}

Here we make a remark on the preservation of stationary subsets of $\omega_1$
by $\vec{K}$-subcomplete forcings.
If $\vec{K}$ is an adequate model sequence, and $\vec{K}$-$\msf{SCFA}$ holds,
then $\vec{K}$-subcomplete forcings preserve
stationary subsets of $\omega_1$ by Lemma \ref{lem:strongly_adequate_capture}
and Proposition \ref{prop:K_scfa_dmnd+}.
However, $\vec K$-subcomplete forcings need not preserve stationary subsets of $\omega_1$ in general.
Suppose $\vec{K}$ is not strongly adequate, and let $B$ be a subset of $\omega_1$
such that $S := \{ \xi < \omega_1 \mid B \cap \xi \notin K_\xi \}$ is stationary.
$\mbb{C}_{\vec{K} , B}$ is $\vec{K}$-subcomplete by Lemma \ref{lem:C_KB_K_subcomplete}.
But $\mbb{C}_{\vec{K} , B}$ adds a club $C \subseteq \omega_1$ with $C \cap S = \emptyset$
by Lemma \ref{lem:C_KB_unbounded}.

By Theorem \ref{thm:con_K_scfa} and Proposition \ref{prop:K_scfa_scfa} and \ref{prop:K_scfa_dmnd+},
we have the following corollary.

\begin{cor} \label{cor:con_scfa_dmnd+}
Assume there is a supercompact cardinal.
Then there is a forcing extension in which $\msf{SCFA}$ and $\dmnd^+_{\omega_1}$ hold.
\end{cor}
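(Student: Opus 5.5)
The corollary follows by chaining results already proved. First apply Theorem \ref{thm:con_K_scfa}: starting from a model with a supercompact cardinal, it gives a forcing extension $V[G_\kappa]$ and a $\dmnd$-model sequence $\vec{K}$ such that $\vec{K}$-$\msf{SCFA}$ holds in $V[G_\kappa]$. From now on I work entirely inside $V[G_\kappa]$.

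In $V[G_\kappa]$, Proposition \ref{prop:K_scfa_scfa} applies directly, since every $\dmnd$-model sequence is in particular an adequate model sequence; it yields $\msf{SCFA}$. Proposition \ref{prop:K_scfa_dmnd+} also applies, because $\vec{K}$ is a $\dmnd$-model sequence and $\vec{K}$-$\msf{SCFA}$ holds. It gives that $\vec{K}$ is a $\dmnd^+_{\omega_1}$-sequence, hence $\dmnd^+_{\omega_1}$ holds.

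The only point to check is a formal mismatch in that last step. A $\dmnd^+_{\omega_1}$-sequence is officially a sequence of countable sets $\mcal{B}_\xi$. Here $\mcal{B}_\xi := K_\xi$, which is countable by clause (iii) of Definition \ref{def:model_seq}, so the identification is legitimate. Both $\msf{SCFA}$ and $\dmnd^+_{\omega_1}$ therefore hold in the same extension $V[G_\kappa]$, which proves the corollary. No step is a genuine obstacle, since all the work is contained in Theorem \ref{thm:con_K_scfa} and Proposition \ref{prop:K_scfa_dmnd+}.
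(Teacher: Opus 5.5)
Your proposal is correct and is exactly the paper's argument: the corollary is obtained by working in the extension given by Theorem \ref{thm:con_K_scfa} and applying Propositions \ref{prop:K_scfa_scfa} and \ref{prop:K_scfa_dmnd+} there. Your check that $\mcal{B}_\xi := K_\xi$ is countable by clause (iii), so that $\vec{K}$ literally is a $\dmnd^+_{\omega_1}$-sequence, is the only extra bookkeeping needed.
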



\end{document}